\numberwithin{equation}{section} 
\newtheorem{introtheorem}{Theorem}
\theoremstyle{plain}
\newtheorem{theorem}{Theorem}[section]
\newtheorem{proposition}[theorem]{Proposition}
\newtheorem{lemma}[theorem]{Lemma}
\newtheorem{corollary}[theorem]{Corollary}
\theoremstyle{definition}
\newtheorem{definition}[theorem]{Definition}
\newtheorem{remark}[theorem]{Remark}
  \def\sl2c{\operatorname{PSL(2,\C)}}
\def\C{\mathbb{C}}
\def\H{{\mathbb{H}}}
\def\R{\mathbb{R}}
\def\P{\mathbb{P}}
\def\N{\mathbb{N}}
\def\CP2{{\mathbb{CP}^2}}
\def\bA{\mathbf{A}}
\def\bB{\mathbf{B}}
\def\Ratd{{\operatorname{Rat}_d}}
\def\Rat{{\operatorname{Rat}}}
\def\oRat{\overline{\operatorname{Rat}}}
\def\oRatd{\overline{\operatorname{Rat}}_d}
\def\oRatdn{\overline{\operatorname{Rat}}_{d^n}}
\def\wRatd{\widehat{\Rat}_d}
\def\ratd{{\operatorname{rat}_d}}
\def\rat{{\operatorname{rat}}}
\def\orat{\overline{\operatorname{rat}}}
\def\oratd{\overline{\operatorname{rat}}_d}
\def\oratdn{\overline{\operatorname{rat}}_{d^n}}
\def\wratd{\widehat{\rat}_d}
\def\ratd{{\operatorname{rat}_d}}
\def\rat{{\operatorname{rat}}}
\def\crit{\operatorname{Crit}}
\def\res{\operatorname{Res}}
\def\ex{Ex}
\DeclareMathAlphabet{\pazocal}{OMS}{zplm}{m}{n}
\def\cA{{\pazocal{A}}}
\def\cB{{\pazocal{B}}}
\def\cE{{\pazocal{E}}}
\def\cF{{\pazocal{F}}}
\def\cJ{{\pazocal{J}}}
\def\cS{{\pazocal{S}}}
\def\cT{{\pazocal{T}}}
\def\oC{\overline{\C}}
\def\tvarphi{\tilde{\varphi}}
\def\tphi{\tilde{\phi}}
\def\hole{\mathrm{Hole}}
\def\so3{{\operatorname{SO(3)}}}
\begin{document}

\title{Compactifications and measures for rational maps}
\author{Jan Kiwi and Hongming Nie}
\thanks{The second author acknowledges the support of ANID/FONDECYT Regular
1240508.} 
\address{Facultad de Matem\'aticas, Pontificia Universidad Cat\'olica de Chile} 
\email{jkiwi@uc.cl}
\address{Los Angeles, CA 90089}
  \email{hongming.i.nie@gmail.com}
\maketitle
\begin{abstract}
  We study extensions of the measure of maximal entropy to suitable compactifications of the parameter space and the moduli space of rational maps acting on the Riemann sphere.
For parameter space, we consider a space which resolves the discontinuity of the iterate map. We show 
  that the measure of maximal entropy extends continuously
  to this resolution space. 
  For moduli space, we consider a space which resolves
  the discontinuity of the iterate map acting on its geometric invariant theory compactification. We show that the measure of maximal entropy, barycentered and modulo rotations, also extends continuously this resolution space. Thus, answering in the positive a question raised by DeMarco.
  A main ingredient is a description of limiting dynamics for some sequences. 
\end{abstract}

\tableofcontents

\section{Introduction}
\label{s:introduction}
The aim of this paper is to study iterations of rational maps
in one complex variable near \emph{degeneration}.
The emphasis will be on a further exploration of the interplay
between \emph{iterate maps} and \emph{measures of maximal entropy},
initiated by DeMarco \cite{DeMarco05,DeMarco07}  and
continued by DeMarco and Faber \cite{DeMarco14},
Favre \cite{Favre20}, Favre and Gong \cite{Favre24} and
Okuyama \cite{Okuyama20,Okuyama24}.

The space of rational functions $f \in \C(z)$
of degree $d \ge 2$, denoted by $\Ratd$, is naturally
identified with a Zariski open subset of $\P^{2d+1}$.
This identification furnishes $\Ratd$ with the compactification
$\oRatd \equiv \P^{2d+1}$.
Elements of the codimension one algebraic set $\partial \Ratd:= \oRatd \setminus \Ratd$
are called \emph{degenerate rational maps of degree $d$}. See \S \ref{s:spaces}.

For $n \ge 2$, the \emph{$n$-th iterate map}
$\Phi_n: \Ratd \to \Rat_{d^n}$, sending $f$ to its $n$-th iterate
$f^{\circ n}$, extends to a rational map between the corresponding
projective spaces $\oRatd$ and $\oRatdn$.
According to DeMarco \cite{DeMarco05}, the map $\Phi_n$ has a non-empty indeterminacy locus $I(d) \subset \partial \Ratd$ which is independent of $n$. 
In particular, $\Phi_n$ fails to have a continuous extension
at any point of $I(d)$. 
To resolve
this situation, it is convenient to consider the embedding 
$$
\begin{array}{cccc}
  \Phi:&\Ratd&\hookrightarrow&\Pi_{n\ge1} \oRatdn\\
       & f & \mapsto& (f, f^{\circ 2},\cdots).
\end{array}
$$
This yields a larger compactification of $\Ratd$:
$$\wRatd := \operatorname{closure} (\Phi(\Ratd)) \subset \Pi_{n\ge1} \oRatdn.$$
Trivially, every iterate map $\Phi_n$ extends continuously to $\wRatd$ as the projection to the $n$-th coordinate.

On the other hand, every rational map $f \in \Ratd$ has a unique measure
of maximal entropy $\mu_f$, see \cite{Freire83,Ljubich83}. This measure is also characterized as
the unique probability measure such that
$$\dfrac{1}{d^n} (f^{\circ n})^* \mu \to \mu_f,$$
for all 
probability measures $\mu$ (see \S \ref{s:mme}) having no mass on the exceptional set of $f$. The support of $\mu_f$ is the Julia set of $f$, denoted by $\cJ(f)$. 
Moreover, $\mu_f \in M^1(\oC)$ depends continuously on $f\in \Ratd$, where
$M^1(\oC)$ denotes the space of probability measures endowed with the weak*-topology, see \cite{Mane83}.

Given $g \in \partial \Ratd$, the pull-back operator on  (non-exceptional) measures extends continuously to $g$. More precisely, the \emph{pull-back $g^*$} can be defined by the property that,
for all non-exceptional  probability
measures $\mu$ for $g$,
as $f\in \Ratd$ converges to $g$, we have that $f^* \mu$ converges to $g^* \mu$, (see \S \ref{s:pull-back}).

Our first result, extending DeMarco's and DeMarco-Faber's results
(\cite[Theorem~0.1]{DeMarco05} and \cite[Theorem A]{DeMarco14}), is about
the limiting behavior at $\partial \Rat_d$ of the measure of maximal entropy:

\begin{introtheorem}
  \label{A}
  For any $\mathbf{g}=(g_n)_{n\ge 1} \in \wRatd$, there exists a probability measure
  $\mu_{\mathbf{g}}\in M^1(\oC)$ such that if  $\mu\in M^1(\oC)$ is any non-exceptional  for $\mathbf{g}$, then 
  as $n \to \infty$,
  $$\dfrac{1}{d^n}(g_n)^* \mu \to \mu_{\mathbf{g}}.$$
  Moreover, the map
  $$
  \begin{array}{cccc}
    \wRatd&\to& M^1(\oC)\\
      \mathbf{g} & \mapsto & \mu_{\mathbf{g}}
  \end{array}
  $$
  is continuous. 
\end{introtheorem}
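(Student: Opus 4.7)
The strategy is to construct $\mu_{\mathbf{g}}$ as a subsequential weak* limit of the classical measures $\mu_{f_k}$ along an approximating sequence $f_k\to\mathbf{g}$, and then to prove via a double-limit argument that this candidate coincides with $\lim_n\tfrac{1}{d^n}g_n^*\mu$ for every non-exceptional $\mu$. Once that characterization is established, the independence of $\mu_{\mathbf{g}}$ from the chosen subsequence is immediate, while continuity of $\mathbf{g}\mapsto\mu_{\mathbf{g}}$ follows from a parallel diagonal argument.

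Concretely, choose $f_k\in\Ratd$ with $\Phi(f_k)\to\mathbf{g}$, so $f_k^{\circ n}\to g_n$ in $\oRatdn$ for every $n$; by compactness of $M^1(\oC)$, pass to a subsequence such that $\mu_{f_k}\to\nu$. Consider the double array $a_{n,k}:=\tfrac{1}{d^n}(f_k^{\circ n})^*\mu\in M^1(\oC)$. For each fixed $n$, the boundary continuity of pullback recalled in the excerpt gives $a_{n,k}\to\tfrac{1}{d^n}g_n^*\mu$ as $k\to\infty$; for each fixed $k$, Brolin--Lyubich equidistribution gives $a_{n,k}\to\mu_{f_k}$ as $n\to\infty$. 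The key algebraic tool is the contravariant semigroup identity $(f_k^{\circ(n+m)})^*=(f_k^{\circ m})^*\circ(f_k^{\circ n})^*$; taking $k\to\infty$ and applying continuity of pullback twice (on non-exceptional measures, jointly in the operator and its argument) yields the recursion
\begin{equation*}
\frac{1}{d^{n+m}}g_{n+m}^*\mu \;=\; \frac{1}{d^m}g_m^*\!\left(\frac{1}{d^n}g_n^*\mu\right).
\end{equation*}
This recursion is the structural substitute for the missing uniformity of Brolin--Lyubich near $\partial\Ratd$: with a suitable diagonal choice $k_n\to\infty$, the measure $a_{n,k_n}$ tracks both $\tfrac{1}{d^n}g_n^*\mu$ (via pullback continuity) and $\mu_{f_{k_n}}\to\nu$ (via Brolin--Lyubich), with the recursion bounding the error. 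This proves $\tfrac{1}{d^n}g_n^*\mu\to\nu$, so the limit is independent of $\mu$ and of the subsequence, and one sets $\mu_{\mathbf{g}}:=\nu$.

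For continuity, suppose $\mathbf{g}^{(i)}\to\mathbf{g}$ in $\wRatd$ and fix a non-exceptional $\mu$ common to all (a generic choice). Boundary continuity of pullback gives $\tfrac{1}{d^n}(g_n^{(i)})^*\mu\to\tfrac{1}{d^n}g_n^*\mu$ as $i\to\infty$ for each fixed $n$, while the already-established characterization gives $\tfrac{1}{d^n}(g_n^{(i)})^*\mu\to\mu_{\mathbf{g}^{(i)}}$ and $\tfrac{1}{d^n}g_n^*\mu\to\mu_{\mathbf{g}}$ as $n\to\infty$; a diagonal extraction then yields $\mu_{\mathbf{g}^{(i)}}\to\mu_{\mathbf{g}}$. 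The principal obstacle throughout is ensuring that the hypotheses of the recursion hold at every stage: one must maintain that each intermediate $\tfrac{1}{d^n}g_n^*\mu$ is non-exceptional for every subsequent $g_m$. This is a subtle condition on the limiting dynamics of the degenerating sequence $f_k$—one must structurally understand how the exceptional loci of the $g_m$ relate to the collapse structure of the $f_k$—and is, I expect, precisely the point at which the paper's announced ``description of limiting dynamics for some sequences'' is deployed.
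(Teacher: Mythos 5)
Your scheme has a genuine gap at its central step, and it is precisely the step where the whole content of Theorem \ref{A} lives. From the two iterated limits of the double array $a_{n,k}=\tfrac{1}{d^n}(f_k^{\circ n})^*\mu$ (namely $a_{n,k}\to\tfrac{1}{d^n}g_n^*\mu$ as $k\to\infty$ for fixed $n$, and $a_{n,k}\to\mu_{f_k}$ as $n\to\infty$ for fixed $k$) you cannot conclude $\tfrac{1}{d^n}g_n^*\mu\to\nu$ without uniformity in one of the two limits (Moore--Osgood); a ``suitable diagonal choice $k_n$'' only makes $a_{n,k_n}$ track $\tfrac{1}{d^n}g_n^*\mu$, and gives no control on its distance to $\mu_{f_{k_n}}$, since the speed of Brolin--Lyubich equidistribution degenerates exactly as $f_k\to\partial\Ratd$. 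The recursion $\tfrac{1}{d^{n+m}}g_{n+m}^*\mu=\tfrac{1}{d^m}g_m^*\bigl(\tfrac{1}{d^n}g_n^*\mu\bigr)$ is an exact identity about the limit objects, not an error estimate, so the phrase ``the recursion bounding the error'' is not an argument. Worse, the recursion is in general inapplicable: it requires $\tfrac{1}{d^n}g_n^*\mu$ to be non-exceptional for $g_m$, and this fails in the most basic degenerations. For $f_k(z)=z^d+k$, each $g_n$ has constant reduction $\infty$ and a single hole at $\infty$ of depth $d^n$, so $\tfrac{1}{d^n}g_n^*\mu=\delta_\infty$ charges $\ex_{g_m}=\{\infty\}$ for every $m$; also note that applying Brolin--Lyubich for fixed $k$ needs $\mu(\cE_{f_k})=0$, which is not implied by non-exceptionality for $\mathbf{g}$. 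So the obstacle you defer to the ``limiting dynamics'' input is not a technical hypothesis to be verified later; in examples it is simply false, and no refinement of your recursion scheme can recover it.

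The paper's proof is organized quite differently, and comparing the two shows why. Instead of iterating the limits $g_n$ against each other, one keeps the sequence $\{f_k\}$ and introduces post-composition scalings: after passing to a subsequence one assumes $\llbracket f_k^{\circ n}]\to\llbracket\varphi_n]$ in $\llbracket\Rat_{d^n}^*]$ and $[f_k]\to[g]\in\oratd$, and uses the DeMarco--Faber relation $\nu=\tfrac{1}{d^n}\varphi_n^*\nu_n$ (Proposition \ref{prop:mea-CE}), where $\nu_n=\lim_k (A_{n,k})_*\mu_{f_k}$. The proof then splits into a trichotomy on the growth of $\deg\tilde\varphi_n$: if $\deg\tilde\varphi_n=o(d^n)$ the limit is the atomic depth measure $\lim\eta_{\varphi_n}/d^n$ (Proposition \ref{p:smalldegree}); if not and $[g]\in\ratd$ it is a single delta mass at the unique hole (Proposition \ref{p:potential}); and in the remaining case $[g]\in\partial\ratd$ one needs the Structure Theorem \ref{p:structure} and Corollary \ref{coro:main} to identify $\nu_n$ as $\delta_{a_n}$ and compare $\eta_{\varphi_n}$ with $\eta_{g_n}$ via Lemma \ref{lem:s}. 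In all cases the identification $\lim_k\mu_{f_k}=\lim_n\tfrac{1}{d^n}g_n^*\mu$ is proved directly, and continuity of $\mathbf{g}\mapsto\mu_{\mathbf{g}}$ then follows from this simultaneous statement together with density of $\Ratd$ in $\wRatd$, rather than from another limit interchange. Your proposal does not contain a substitute for any of this analysis, so as it stands it is a plausible-looking reduction to exactly the hard part of the theorem, not a proof.
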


Note that $\oRatd \setminus I(d)$ can be identified with
a subset of $\wRatd$ via the continuous extension
of  $\Phi: \Ratd \to \wRatd$.
After this identification, the previous theorem
for $\mathbf{g} \in \oRatd \setminus I(d)$ was established
by DeMarco in \cite[Theorem 0.1]{DeMarco05}.

The limiting behavior of measures at the indeterminacy
locus $I(d)$ was further studied by
DeMarco and Faber in \cite{DeMarco14}.
They showed that any limit of the measures $\mu_f$
as $\Ratd \ni f \to g \in \partial \Ratd$ is purely atomic, see \cite[Theorem A]{DeMarco14}.
We build on their techniques to prove Theorem \ref{A}.

Degeneration along holomorphic families can be studied
with the aid of non-Archimedean dynamics. A holomorphic family of maps $f_t \in \oRatd$, parameterized by $t$ in a
neighborhood of $0 \in \C$, is \emph{degenerate} if $f_t \in \partial\Ratd$ only for $t=0$. DeMarco and Faber showed that along any
degenerate holomorphic
family the measures of maximal entropy converge, see \cite[Theorem B]{DeMarco14}.
Moreover, they proved that the limit measure is keen to
an equidistribution measure associated to a
non-Archimedean dynamical system. Related approaches  studying
the limit of measures and the corresponding Lyapunov exponents
were implemented by Favre \cite{Favre20}, Favre-Cong \cite{Favre24}, and Okuyama \cite{Okuyama20, Okuyama24}.
It is worth to mention that, for every degenerate family $f_t$,
there exists $\mathbf{g}=(g_n) \in \wRatd$ such that
$f_t^{\circ n} \to g_n$, as $t \to 0$, for all $n$. Thus, convergence
of the measure of maximal entropy along degenerate holomorphic families can be deduced from Theorem \ref{A}. Although the proof
of Theorem~\ref{A} profits from non-Archimedean dynamics intuition,  it does not rely on  non-Archimedean tools.

There is a corresponding discussion in the \emph{moduli space}, which was also initiated by DeMarco, in \cite{DeMarco07}.
Consider the quotient $\ratd:=\Ratd/\sl2c$ of $\Ratd$
that identifies M\"obius conjugate rational maps.
Following Silverman \cite{Silverman98}, its GIT-compactification $\oratd$ is a projective variety and the boundary $\partial\ratd:=\orat_d\setminus\rat$ consists of the GIT-conjugacy classes of semistable maps in $\oRatd$. For $n\ge 1$, the $n$-th iterate map $\Psi_n: \rat_d\to \rat_{d^n}$, sending $[f]$ to $[f^{\circ n}]$, 
extends to a rational map $\oratd \dashrightarrow \oratdn$
with non trivial indeterminacy locus that depend on $n$, see \cite[Theorem 5.1]{DeMarco07} and \cite[Theorem A]{Kiwi23}.
To resolve this situation, 
as above, we consider $$
\begin{array}{cccc}
  \Psi:&\ratd&\hookrightarrow&\Pi_{n\ge1} \oratdn\\
       & [f] & \mapsto& ([f],[f^{\circ 2}],\cdots),
\end{array}
$$
and let
$$\wratd := \operatorname{closure} (\Psi(\ratd)) \subset \Pi_{n\ge1} \oratdn.$$
Iterate maps $\Psi_n$ extend continuously to $\wratd$ as the projection to the corresponding coordinate.

In order to consider measures for elements in the moduli space, following DeMarco \cite{DeMarco07}, it is convenient to work with
 \emph{conformally barycentered probability measures}, see \S \ref{s:moduli} .
 The space of conformally barycentered  probability measures modulo  the push-forward action by $\operatorname{SO}(3)$ is locally compact. We denote by $\overline{M}^1_{dm}(\oC)$ its one point ($=[\infty]$) compactification.

 For any measure $\mu \in M^1(\oC)$ with no atoms of weight $\ge 1/2$, there exists a M\"obius transformation such that $\nu = A_\ast \mu$
 is conformally barycentered. Moreover, $\nu$ is unique up to push-forward by an element in $\operatorname{SO}(3)$. Thus, we have a well defined
 class $[\mu] :=[\nu] \in  \overline{M}^1_{dm}(\oC)$.
 If  $\mu$ has an atom of weight $\ge 1/2$, we let
 $[\mu] := [\infty]\in  \overline{M}^1_{dm}(\oC)$. 
 Since the measure of maximal entropy $\mu_f$ of any rational map $f$
 is atom free, there exists
 a continuous map 
 $$
\begin{array}{ccc}
  \ratd&\rightarrow&\overline{M}^1_{dm}(\oC)\\
     f & \mapsto& [\mu_{f}]. 
\end{array}
$$
We show this map extends continuously to $\wratd$, answering in the positive the question posed 
by DeMarco in \cite[Section 10]{DeMarco07}:  

\begin{introtheorem}
  \label{B}
   For any $[\mathbf{g}]=([g_n])_{n\ge 1} \in \wratd$, there exists 
  $x_{[\mathbf{g}]} \in \overline{M}^1_{dm}(\oC)$ such that if    $\mu\in M^1(\oC)$ is non-exceptional for $(g_n)_{n\ge 1}$, then as $n \to \infty$, 
  $$\left[\dfrac{1}{d^n}(g_n)^* \mu\right] \to x_{[\mathbf{g}]}. $$
  Moreover, the map
  $$
  \begin{array}{cccc}
    \wratd&\to& \overline{M}^1_{dm}(\oC)\\
      {[\mathbf{g}]} & \mapsto & x_{[\mathbf{g}]} 
  \end{array}
  $$
  is continuous. 
\end{introtheorem}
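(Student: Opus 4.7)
The plan is to deduce Theorem B from Theorem A by composing $\mathbf{g}\mapsto\mu_{\mathbf{g}}$ with the conformal barycentering quotient and then descending the result to $\wratd$. First I would verify that the natural forgetful map
\[
\pi: \wRatd \to \wratd,\qquad (g_n)_{n\ge 1}\mapsto ([g_n])_{n\ge 1},
\]
is continuous and surjective. Continuity is immediate since each $\oRatdn\to\oratdn$ is continuous on the GIT-semistable locus. For surjectivity, given $[\mathbf{g}]\in\wratd$ pick $f_k\in\Ratd$ with $\Psi(f_k)\to[\mathbf{g}]$; by compactness of each $\oRatdn$ and a diagonal extraction, a subsequence of $\Phi(f_k)$ converges in $\Pi_n\oRatdn$ to some $\mathbf{h}\in\wRatd$, and by continuity of $\pi$ we have $\pi(\mathbf{h})=[\mathbf{g}]$.

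Second, composing Theorem A's map $\mathbf{g}\mapsto\mu_{\mathbf{g}}$ with the conformal barycentering quotient $M^1(\oC)\to\overline{M}^1_{dm}(\oC)$ yields
\[
\beta:\wRatd\to\overline{M}^1_{dm}(\oC),\qquad \mathbf{g}\mapsto[\mu_{\mathbf{g}}].
\]
By construction of the one-point compactification, this is continuous even at configurations where the limit measure acquires an atom of mass $\ge 1/2$: such a sequence escapes to $[\infty]$ precisely because the conformal-barycentering M\"obius transformations must diverge in $\sl2c/\so3$. By $\sl2c$-equivariance $\mu_{A\mathbf{g} A^{-1}}=A_*\mu_{\mathbf{g}}$ and the invariance of the class $[\cdot]$ under M\"obius push-forward, $\beta$ is constant on $\sl2c$-orbits. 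The key step is to show $\beta$ factors through $\pi$: if $\mathbf{h},\mathbf{h}'\in\wRatd$ satisfy $\pi(\mathbf{h})=\pi(\mathbf{h}')$, then $\beta(\mathbf{h})=\beta(\mathbf{h}')$. One approximates $\mathbf{h}=\lim \Phi(f_k)$ and $\mathbf{h}'=\lim \Phi(f_k')$; since $[\Psi(f_k)]$ and $[\Psi(f_k')]$ share the same limit in $\wratd$, a coordinatewise application of GIT equivalence combined with a diagonal extraction produces M\"obius conjugations $C_k$ such that $\Phi(C_k f_k C_k^{-1})$ tracks $\Phi(f_k')$ in $\wRatd$, and combining this with Theorem A and invariance of $[\cdot]$ under push-forward yields $\beta(\mathbf{h})=\beta(\mathbf{h}')$. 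Hence $\beta$ descends to $x_{[\cdot]}:\wratd\to\overline{M}^1_{dm}(\oC)$.

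Third, continuity of $x_{[\cdot]}$ follows from a subsequence argument: given $[\mathbf{g}^{(k)}]\to[\mathbf{g}]$ in $\wratd$, lift via the diagonal extraction of step one to obtain, along a subsequence, $\mathbf{h}^{(k)}\to\mathbf{h}^\infty\in\wRatd$ with $\pi(\mathbf{h}^\infty)=[\mathbf{g}]$. Continuity of $\beta$ then gives $x_{[\mathbf{g}^{(k)}]}=\beta(\mathbf{h}^{(k)})\to\beta(\mathbf{h}^\infty)=x_{[\mathbf{g}]}$ along the subsequence, and Hausdorffness of $\overline{M}^1_{dm}(\oC)$ upgrades this to full-sequence convergence. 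The non-exceptional convergence $[\tfrac{1}{d^n}(g_n)^*\mu]\to x_{[\mathbf{g}]}$ is then obtained by choosing a specific lift $\mathbf{h}$ of $[\mathbf{g}]$, invoking the corresponding convergence $\tfrac{1}{d^n}(g_n)^*\mu\to\mu_{\mathbf{h}}$ from Theorem A, and taking classes in $\overline{M}^1_{dm}(\oC)$.

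The main obstacle is the well-definedness step in the second paragraph: the GIT fiber $\pi^{-1}([\mathbf{g}])$ typically strictly contains any single $\sl2c$-orbit, because $\sl2c$-orbit closures in $\wRatd$ can intersect without coinciding. One cannot therefore reduce well-definedness to pure equivariance, and must instead exploit the dynamical coherence encoded by the simultaneous data of all iterates together with a careful coordinatewise GIT-semistability analysis to force $\beta$ to be constant on each such fiber. A secondary but delicate point is the quantitative control of conformal barycentering near the $1/2$-atom threshold, needed to guarantee that $\beta$ extends continuously through the added point $[\infty]$ of $\overline{M}^1_{dm}(\oC)$.
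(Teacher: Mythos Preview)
Your strategy has two genuine gaps that prevent it from going through as written.

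\textbf{The forgetful map $\pi$ does not exist.} You define $\pi((g_n))=([g_n])$, but the GIT quotient $\oRatdn\to\oratdn$ is only defined on the semistable locus, and coordinates of a boundary point $(g_n)\in\partial\wRatd$ are frequently GIT-unstable. For instance, a sequence $f_k\to g$ with $\tilde g$ constant and all holes concentrated at one point gives $g_1$ unstable; there is no class $[g_1]\in\oratd$ to project to. So the very first sentence of the plan fails.

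\textbf{The composite $\beta=\bigl[\,\mu_{\mathbf g}\,\bigr]$ is not continuous.} The map $\mu\mapsto[\mu]$ from $M^1(\oC)$ to $\overline{M}^1_{dm}(\oC)$ is discontinuous precisely at measures with an atom of mass $\ge 1/2$, and the paper gives an explicit counterexample at the end of \S\ref{s:bary}: take $\mu_n$ with atoms of mass $1/3$ at $0$, $1/n$, and $1$. Then $\mu_n\to\mu$ with $[\mu]=[\infty]$, yet $[\mu_n]$ stays bounded in $M^1_{dm}(\oC)$. Since limits $\mu_{\mathbf g}$ in Theorem~\ref{A} are typically purely atomic with atoms that can have mass exactly at or across the $1/2$ threshold, your claim that ``such a sequence escapes to $[\infty]$'' is simply false in general. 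You flag this at the end as a ``secondary delicate point,'' but it is in fact the heart of the matter.

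The paper's proof proceeds quite differently. Rather than constructing $\pi$ and $\beta$, it works directly with approximating sequences $\{f_k\}\subset\Ratd$ and an accumulation point $[\mu_\infty]$ of $\{[\mu_{f_k}]\}$. When $[\mu_\infty]\neq[\infty]$, one barycenters the $\mu_{f_k}$ first and then applies Theorem~\ref{A}; the absence of a heavy atom forces the limiting $h_n$ to be GIT-\emph{stable} for large $n$, which is exactly what pins down $[h_n]=[g_n]$ (Lemma~\ref{lem:infinity}). When instead $[\nu]\neq[\infty]$, the depth bound $d_z(g_{n_i})<(1/2-2\varepsilon)d^{n_i}$ combined with the Structure Theorem (via Corollary~\ref{coro:main1}) forces $d_z(g_{n_i,\ell})/d^\ell$ to stabilize, yielding $[\nu_{n_i}]=[\mu_\infty]$; a separate counting argument (Lemma~\ref{lem:inftyo}) rules out the remaining small-degree-growth case. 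The dynamical input from \S\ref{s:fullyramified} is essential and has no analogue in your outline.
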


The space  $\wratd$ coincides with the compactification of $\ratd$ constructed by taking the inverse limit of the closure of $\ratd$ in $\Pi_{i=1}^n\overline{\mathrm{rat}}_{d^i}$ under the iterate map $\ratd\to\Pi_{i=1}^n\overline{\mathrm{rat}}_{d^i}$. On the other hand, by considering the closure of the graph of the map $\ratd\rightarrow\overline{M}^1_{dm}(\oC)$ in $\oratd\times\overline{M}^1_{dm}(\oC)$, DeMarco constructed a compactification $\overline{\orat}_d$ of $\ratd$ and established that, in the quadratic case, the compactifications $\widehat{\rat}_2$ and  $\overline{\orat}_2$ are canonically homeomorphic \cite[Theorem 1.1]{DeMarco07}. However, for higher degrees,  the compactifications $\wratd$ and  $\overline{\orat}_d$ are not homeomorphic and, in particular, there cannot exist a continuous map $\overline{\orat}_d\to\wratd$ which restricts to the identity on $\ratd$, see \cite[Section 10]{DeMarco07}. Our Theorem~\ref{B} immediately implies that there exists a continuous map $\wratd\to\overline{\orat}_d$ which restricts to the identity on $\ratd$. In other words,  the GIT  limits of the iterates of an unbounded sequence in $\ratd$ determine the limit
of the barycentered measures of maximal entropy.

Along a degenerating sequence of rational maps $\{f_k\}$, the choice of coordinates
in the domain and in the range become of central importance.
The key to Theorems~\ref{A} and \ref{B} is to analyze the case
in which an iterate of $f_k$
uniformly converges to a  non-degenerate rational map,
modulo changes of coordinates in the domain and the range.
We obtain a ``Structure Theorem'' that describes  the  limiting  dynamics, in
this case,  see  \S\ref{s:fullyramified}. The Structure Theorem yields, as a byproduct of independent interest, Theorem \ref{C}, stated below.

Recall that a \emph{polynomial-like map of degree $d$} is a triple $(U,V,\psi)$, where $U$ and $V$ are open topological discs contained in $\oC$ with $U$ relatively compact in $V$ and $\psi:U\to V$ is a proper holomorphic map of degree $d$. Any polynomial-like map of degree $d$ is quasi-conformally equivalent to a degree  $d$ polynomial, see \cite[Theorem 1]{Douady85}.

\begin{introtheorem}\label{C}
  Let $\{f_k\}$ be a sequence in $\Ratd$ 
  such that $\{[f_k]\}$ converges to an element in $\partial \rat_d$. 
  Assume there exist $\{A_k\}$ and $\{B_k\}$ such that
  $B_{k}\circ f_k^{\circ 5}\circ A_{k}^{-1}$ converges in $\Rat_{d^5}$, as $k\to\infty$.
  Then for any sufficiently large $k$, there exist $U_k$ and $V_k$  such that $(U_k,V_k, f_k)$ is a polynomial-like map of degree $d$. Moreover,
  $\cJ(f_k)\subset U_k$ and $\oC\setminus U_k$ is contained in a completely invariant attracting Fatou component of $f_k$.
\end{introtheorem}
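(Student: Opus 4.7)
The plan is to deduce Theorem~\ref{C} as a direct consequence of the Structure Theorem of Section~\ref{s:fullyramified}, which under the given hypothesis provides a precise description of the limiting dynamics of $\{f_k\}$ near degeneration. I expect that theorem to furnish, for each sufficiently large $k$, a distinguished attracting fixed point $p_k$ of $f_k$ whose multiplier tends to zero and whose basin engulfs the sphere outside a shrinking neighborhood of $\cJ(f_k)$, together with a ``fully ramified'' topological disk on which $f_k$ acts as a polynomial-like map of degree $d$. The role of five iterates in the hypothesis should reduce, via the Structure Theorem, to guaranteeing enough distinct limiting fixed points (or enough control of the postcritical geometry) to rigidify the choice of coordinate changes $A_k, B_k$.

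First I would use the limit $F := \lim_{k \to \infty} B_k \circ f_k^{\circ 5} \circ A_k^{-1} \in \Rat_{d^5}$ to produce candidate disks. Since $\cJ(F)$ is a compact subset of $\oC$, I pick concentric round disks $D \Subset D'$ containing $\cJ(F)$, chosen generically so that $\partial D$ and $\partial D'$ avoid the postcritical set of $F$. Setting $U_k := A_k^{-1}(D)$ and $V_k := f_k(U_k)$, continuity under the convergence yields $A_k(\cJ(f_k)) \to \cJ(F)$ in the Hausdorff topology (using $\cJ(f_k) = \cJ(f_k^{\circ 5})$), so $\cJ(f_k) \subset U_k$ for all large $k$.

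Next I would verify that $(U_k, V_k, f_k)$ is a polynomial-like map of degree $d$. The Structure Theorem should supply the essential ingredients: the restriction $f_k|_{U_k}$ is proper onto $V_k$, the strict inclusion $\overline{U_k} \Subset V_k$ holds (by comparing $A_k(V_k)$ to the pull-back of $D'$ under a limiting profile of $f_k$), and the degree of the restriction is exactly $d$. The degree count reflects that the degeneration $[f_k] \to \partial \rat_d$ factors off a degree $d$ polynomial-like piece absorbing all non-escaping critical activity, with the remaining critical points lying in the basin of $p_k$. For the final claim, since $f_k : U_k \to V_k$ has degree equal to the global degree of $f_k$, one obtains $f_k^{-1}(V_k) = U_k$ and hence $f_k(\oC \setminus U_k) \subset \oC \setminus V_k \subset \oC \setminus U_k$; this forward-invariance places $\oC \setminus U_k$ in the Fatou set, and iteration drives every orbit into the completely invariant attracting basin of $p_k$.

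I expect the main obstacle to be verifying the precise degree count and the strict inclusion $\overline{U_k} \Subset V_k$. The former requires the Structure Theorem's statement that all non-escaping critical points of $f_k$ concentrate in $U_k$, i.e., that the degeneration is of ``polynomial-like type $d$'' rather than degenerating into a product of smaller-degree pieces. The latter relies on choosing $D \Subset D'$ generically with respect to the postcritical set of $F$ and invoking quasi-conformal stability under the convergence $B_k \circ f_k^{\circ 5} \circ A_k^{-1} \to F$ to transport the strict inclusion $\overline{D} \subset D'$ back through $A_k^{-1}$ to the $f_k$-side; here the fact that $A_k$ and $B_k$ are Möbius transformations (hence global biholomorphisms of $\oC$) is essential.
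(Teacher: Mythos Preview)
Your proposal has a genuine gap at its core: the construction of the disks $U_k$, $V_k$ is wrong, and the claimed Hausdorff convergence $A_k(\cJ(f_k))\to\cJ(F)$ is unfounded.

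The hypothesis $B_k\circ f_k^{\circ 5}\circ A_k^{-1}\to F$ involves \emph{different} pre- and post-compositions, so $F$ is not conjugate to any $f_k^{\circ 5}$ and there is no link between $\cJ(F)$ and $A_k(\cJ(f_k))$; the identity $\cJ(f_k)=\cJ(f_k^{\circ 5})$ does not help here. More seriously, the hypothesis concerns the \emph{fifth} iterate in two unrelated coordinates, while the conclusion demands a polynomial-like restriction of the \emph{first} iterate $f_k$. Setting $U_k=A_k^{-1}(D)$ with $D\supset\cJ(F)$ gives no mechanism to control $f_k|_{U_k}$: you flag the degree count and the strict inclusion $\overline{U_k}\Subset V_k$ as obstacles, and indeed neither can be obtained from this construction. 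The vague invocations of an attracting fixed point $p_k$ and of ``the Structure Theorem should supply the essential ingredients'' do not constitute an argument.

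The paper proceeds quite differently. One first conjugates, setting $h_k:=A_k\circ f_k\circ A_k^{-1}$, and passes to a subsequence so that $\llbracket h_k^{\circ n}]$ converges for every $n$; since $\varphi_5\in\Rat_{d^5}$ factors as $\varphi_{4,5}\circ\cdots\circ\varphi_{1,2}\circ\varphi_1$ with each factor in $\Rat_d^\ast$, all of $n=1,\dots,5$ are fully ramified times. The Structure Theorem (statements $(0.5)$--$(5.5)$) then shows that the associated space is a \emph{linear chain} of spheres $\oC_1,\dots,\oC_5$, and that an intermediate transition map $\varphi_{n,n+1}:\oC_n\to\oC_{n+1}$ has degree $d$, sends the two intersection points $a_{n\mp1,n}$ to the \emph{distinct} intersection points $a_{n+1\mp1,n+1}$, each with local degree $d$. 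This is exactly the hypothesis of Proposition~\ref{p:julia}, which builds the polynomial-like map as $h_k:\oC\setminus\overline{D'_k}\to\oC\setminus\overline{D_k}$, where $D'_k$ is a \emph{small} disk around the intersection point $a$ (the direction of the attracting basin), not a large disk around a Julia set. Both the degree $d$ and the strict inclusion come out of the local degree $d$ at $a$ together with the collapse of the coordinate change between consecutive spheres.
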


In our context, the assumption that $f_k^{\circ 5}$ converges to a non-degenerate map, modulo changes of coordinates in the domain and range,
is sufficiently weak. However, it might not be sharp. Is the conclusion of
Theorem~\ref{C} true, if $5$ is replaced by $3$?

{\subsection*{Outline}
  The paper is organized as follows. In \S \ref{s:degenerate}, we discuss the basic properties of degenerate rational maps  including their action on measures via  pull-back. In \S\ref{s:fullyramified}, based on the trees of bouquets
  of spheres from Appendix \ref{s:appendix}, we establish our Structure Theorem for the limiting dynamics of ``fully ramified sequences''.  As a byproduct,
  we obtain Theorem \ref{C}.
  We prove Theorem \ref{A} in \S \ref{s:Aproof} and Theorem \ref{B} in \S \ref{s:moduli}. In Appendix~\ref{s:appendix}, we discuss maps acting on trees of bouquet of spheres. This is closely related to the work by Arfeux~\cite{ArfeuxCompactification} regarding  maps acting on trees of spheres.
  
{\subsection*{Acknowledgments}
Part of this work was done when the authors were in the Institute for Mathematical Sciences (IMS) at Stony Brook University. The authors are grateful to the IMS for its hospitality.

\section{Degenerate rational maps}
\label{s:degenerate}

A degenerate rational map $f \in \partial \Ratd$ has a well defined action 
outside a finite set of points, called the \emph{holes} of $f$.
 The action,
known as the \emph{reduction $\tilde{f}$} of $f$,
is a lower degree rational map, maybe constant.
Intuitively, the holes ``map'', under $f$, onto $\oC$ with a degree, called
its \emph{depth}. The reduction $\tilde{f}$, the set of holes
$\hole(f)$ and their depth $d_h(f)$,
completely prescribe the degenerate map $f$. We discuss these definitions from
 \cite{DeMarco05} in \S \ref{s:holes}. The parameter space of rational maps, degenerate or not,
is discussed in \S \ref{s:spaces}. A well known formula for the depth of the holes of compositions is written in \S \ref{s:composition}. As rational maps converge to a degenerate map, their action 
is better understood by changing coordinates, or scales, at the domain and
the range.  In \S \ref{s:post}, we consider the space of rational
maps, possibly degenerate but with non-constant reduction, modulo post-composition by M\"obius maps. It is a compact space.
 In \S \ref{s:depth}, we present a formula
for the depth of holes in the setting of \S \ref{s:post} as an analog of 
a formula obtained by the authors in \cite{Kiwi23}, and also furnish a formula to compute the depth using preimages.  
In \S \ref{s:pull-back}, we recall the definitions of the pull-back of measures by a rational map and, following DeMarco \cite{DeMarco05},
by a degenerate rational map. We show that the degenerate pull-back
is the continuous extension 
of the pull-back (Proposition \ref{p:pullback-continuity}). 
In \S \ref{s:mme}, we summarize some general properties of the measures
of maximal entropy.
Continuity of the pull-back of measures yields a key result established by DeMarco and Faber (see \cite[Theorem 2.4]{DeMarco14}}), which provides us with an equation satisfied by suitable limits
of the measures of maximal entropy.

\subsection{Holes and reduction}
\label{s:holes}
Although we mostly work in non-homogeneous coordinates, to define reductions, holes and depths, homogeneous coordinates are convenient.
A \emph{rational map $f: \P^1  \to \P^1 $ of degree $d\ge 2$} can be written as
$$f: [z:w] \mapsto [P(z,w):Q(z,w)],$$
where $P,Q$ are relatively prime
degree $d$ homogeneous polynomials.
For concreteness, one may write
\begin{equation}
  \label{eq:pq}
    \begin{aligned}
  P(z,w) & = & a_0 w^d + a_1 z w^{d-1} + \cdots +  a_d z^d,\\
  Q(z,w) & = & b_0 w^d + b_1 z w^{d-1} + \cdots + b_d z^d.
  \end{aligned}
\end{equation}
Then $f = [P:Q]$ is a degree $d$ rational map if and only if
$P,Q$ are relatively prime.

A degree $d$ \emph{degenerate rational map}
$f$ is an expression $[P:Q]$ such that
$P,Q$, as above,  are not relatively prime. Consistent with the notation,
for all $\lambda \in \C \setminus \{0\}$, we regard $[P:Q]$
and $[\lambda P: \lambda Q]$ as the same map. 
Let $H_f$ be a common factor of $P$ and $Q$
of maximal degree. Then, there exist polynomials $\tilde{P}$, $\tilde{Q}$,
such that 
$$P = H_f \tilde{P}\ \ \ \  \text{and}\ \ \ \ Q= H_f \tilde{Q}. $$
We say that $$\hole (f) := \{ h \in \P^1  : H_f(h) =0\}$$
is the \emph{set of holes of $f$}. The \emph{depth $d_h(f)$} of a hole $h$ is
the multiplicity of $h$ as a root of $H_f$; that is, modulo multiplication by a constant,
$$H_f = \prod_{h \in \hole(f)} L_h^{d_h(f)},$$
where $L_h$ is a homogeneous linear form vanishing at $h$; moreover, for notational convenience, we let
$d_h(f):=0$ whenever $h \in\P^1\setminus \hole(f)$.
We say that $$\tilde{f} := [\tilde{P}: \tilde{Q}]$$
is the \emph{reduction of $f$}. 
Note that $\tilde{f}$ is a well defined self-map of $\P^1 $.
We write $f=H_f \cdot \tilde{f}$.

Given a non-degenerate rational map $\tilde{f}$  of degree $\deg \tilde{f} < d$ and a set of holes $\hole(f)$
with prescribed depths such that the total number of holes, counted with depths,
is $d-\deg \tilde{f}$, there exists a
unique degree $d$ degenerate map $f$ compatible with
this information.

For notational simplicity, we mostly work with non-homogeneous coordinates,
after identification of $\P^1 $ with $\oC := \C\cup\{\infty\}$.
Thus, we write $f(z) = P(z,1)/Q(z,1)$ or simply $f(z) = P(z)/Q(z)$. 

\subsection{Spaces of rational maps}
\label{s:spaces}
The space of possibly degenerate rational maps $\oRatd$ is the $(2d+1)$-dimensional complex projective space $\P^{2d+1} $. Its elements are identified with
expressions of the form $f = [P:Q]$ by 
$$
\begin{array}{ccc}
  \oRatd & \leftrightarrow & \P^{2d+1}  \\
   f = [P:Q] & \leftrightarrow & [a_0: \cdots : a_d : b_0 : \dots :b_d],
\end{array}
$$
where $P,Q$ are polynomials of the form \eqref{eq:pq}.

The degree $d$ homogeneous resultant of $P,Q$ is a homogeneous polynomial
in the coefficients. Its vanishing locus $\partial\Ratd := \{ \res = 0\}$
is the set of {degenerate maps} in $\oRatd$.
Hence, the Zariski open subset $\Rat_d := \oRatd \setminus \partial \Ratd$
is the space of rational maps of degree $d$. Convergence in $\Ratd \subset \P^{2d+1}$ coincides with uniform convergence of rational maps.

M\"obius transformations form the space $\Rat_1$ of degree $1$ rational maps that is naturally identified with $\operatorname{PSL(2,\C)}$. Each element of $\partial \Rat_1$ is a degenerate M\"obius transformation: a map with constant reduction and one hole, counted with depth.

\subsection{Composition of degenerate maps}
\label{s:composition}
Given $d, d' \ge 1$, consider the map 
$$
\begin{array}{cccc}
  \circ :& \Ratd \times \Rat_{d'} & \to & \Rat_{dd'}\\
  & ( f, g) & \mapsto & f \circ g.
\end{array}
$$
It extends to a rational map from the projective
variety $\oRatd \times \oRat_{d'}$ into $\oRat_{dd'}$ with indeterminacy locus
$$I(d,d') := \{ (f,g) : \tilde{g} \equiv h \in \hole(f) \}.$$

Following \cite[Lemma 2.6]{DeMarco07}, for all $(f,g) \in \oRatd \times \oRat_{d'} \setminus I(d,d')$,
we have that $f \circ g$ is the unique element of $\oRat_{dd'}$ with
reduction $\tilde{f} \circ \tilde{g}$ and 
holes $h\in\tilde{g}^{-1} (\hole(f)) \cup \hole(g)$ with depths 
\begin{equation}
  \label{eq:depth-composition}
  d_h (f\circ g) = d_h (g) \cdot d + \deg_h (\tilde{g}) \cdot d_{\tilde{g}(h)} (f).
\end{equation}


\subsection{Left classes of rational maps}
\label{s:post}
To deal with degenerations towards maps with constant reduction,
the key is the following
``scaling'' result:

\begin{proposition}[{\cite[Lemma 2.1]{DeMarco14}}]
  \label{p:post-quotient}
  Let $\{f_k\}$ be a sequence in $\Ratd$. Then there exists a subsequence
  $\{f_{k_i} \}$ and a sequence $\{A_i\}$ in $\operatorname{PSL(2,\C)}$ 
  such that
  $$A_i \circ f_{k_i} \to \varphi \in \oRatd,$$
  and  $\tilde{\varphi}$ is not constant.
  Moreover, if $\{B_i\}$ is another such sequence, then $\{B_i^{-1} \circ A_i\}$
  converges in $\operatorname{PSL(2,\C)}$.
\end{proposition}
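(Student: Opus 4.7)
The plan is to associate to each $f_k$ an atom-free probability measure on $\oC$, normalize it by a M\"obius transformation via the conformal barycenter, and show that the resulting normalization forces the limit in $\oRat_d$ to have non-constant reduction. Uniqueness will then follow from continuity of the composition operation on the complement of the indeterminacy locus.

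For \textbf{existence}, I will take $\omega$ to be the spherical (Fubini--Study) area measure on $\oC$ and set $\nu_k := (f_k)_* \omega$, which is absolutely continuous with respect to $\omega$ (hence atom-free) since $f_k$ is holomorphic and finite-to-one. For each $k$, there exists $A_k \in \operatorname{PSL(2,\C)}$, unique up to the compact subgroup $\operatorname{SO}(3)$, such that $(A_k)_* \nu_k$ is conformally barycentered (see \S \ref{s:moduli}). Using compactness of the space of conformally barycentered probability measures without heavy atoms modulo $\operatorname{SO}(3)$, together with compactness of $\oRatd = \P^{2d+1}$, I pass to a subsequence $\{f_{k_i}\}$ and absorb an $\operatorname{SO}(3)$-ambiguity into $A_i$ so that $(A_i)_* \nu_{k_i} \to \nu^\ast$ weakly and $A_i \circ f_{k_i} \to \varphi$ in $\oRatd$, with $\nu^\ast$ conformally barycentered (hence not a Dirac). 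To see $\tilde\varphi$ is non-constant, suppose $\tilde\varphi \equiv c$; then $A_i \circ f_{k_i}$ converges to $c$ uniformly on compact subsets of $\oC \setminus \hole(\varphi)$, and since $\hole(\varphi)$ is finite (hence $\omega$-null), dominated convergence gives $(A_i)_* \nu_{k_i} \to \delta_c$, contradicting that $\nu^\ast$ is not a Dirac.

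For \textbf{uniqueness}, given $\{B_i\} \subset \operatorname{PSL(2,\C)}$ with $B_i \circ f_{k_i} \to \psi \in \oRatd$ and $\tilde\psi$ non-constant, I set $M_i := B_i \circ A_i^{-1} \in \operatorname{PSL(2,\C)} \subset \oRat_1 = \P^3$ and pass to a subsequence so that $M_i \to M$ in $\oRat_1$. Since $\tilde\varphi$ is non-constant, the pair $(M, \varphi)$ avoids the composition indeterminacy locus $I(1,d)$ (see \S \ref{s:composition}), so $M_i \circ (A_i \circ f_{k_i}) = B_i \circ f_{k_i}$ passes to the limit as $M \circ \varphi = \psi$ in $\oRatd$. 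If $M$ were degenerate with reduction $c$, the composition formula would force $\tilde\psi = \tilde M \circ \tilde\varphi \equiv c$, contradicting non-constancy of $\tilde\psi$. Hence $M \in \operatorname{PSL(2,\C)}$, an open subset of $\oRat_1$, so $M_i \to M$ in $\operatorname{PSL(2,\C)}$, and convergence of $B_i^{-1} \circ A_i$ follows by the parallel analysis applied in the opposite order.

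The \textbf{main obstacle} is guaranteeing that $\tilde\varphi$ is non-constant. A naive three-point normalization (e.g.\ requiring $A_k \circ f_k$ to fix $(0,1,\infty)$) can fail because the reference points may themselves become holes of $\varphi$, allowing constant reduction to persist. The measure-theoretic normalization via conformal barycentering is robust: it constrains the distribution of the pushforward of the spherical area rather than the image of any specific point, and the only way such a mass distribution can degenerate along $A_i \circ f_{k_i}$ is to a Dirac mass, which the barycenter condition explicitly rules out.
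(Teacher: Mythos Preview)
The paper does not give its own proof of this proposition; it is quoted directly from \cite[Lemma~2.1]{DeMarco14}. Your barycentering argument is a legitimate route and differs from the more elementary normalizations one usually sees. Two small points deserve tightening.

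In the existence step, you claim the weak limit $\nu^\ast$ of the barycentered measures $(A_i)_*\nu_{k_i}$ is itself barycentered. That need not be true: by \S\ref{s:bary} (cf.\ \cite[Lemma~8.3]{DeMarco07}), the closure $\overline{M}^1_{bc}(\oC)$ also contains the antipodal measures $\tfrac12(\delta_a+\delta_{-1/\bar a})$. Your contradiction survives unchanged, since such a measure is still never a single Dirac $\delta_c$; just adjust the sentence accordingly.

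In the uniqueness step, you pass to a subsequence to get $M_i=B_i\circ A_i^{-1}\to M$. To upgrade to convergence of the full sequence you must check that every subsequential limit is the same $M$: if $M,M'\in\operatorname{PSL(2,\C)}$ both satisfy $M\circ\varphi=\psi=M'\circ\varphi$, then $(M')^{-1}M$ fixes every value of the non-constant $\tilde\varphi$, hence is the identity. Finally, note that what your argument (and its symmetric version) actually yields is convergence of $B_i\circ A_i^{-1}$ and of its inverse $A_i\circ B_i^{-1}$; the quantity $B_i^{-1}\circ A_i$ in the displayed statement need not converge in $\operatorname{PSL(2,\C)}$ in general (take $f_k(z)=kz^2$, $A_k(z)=z/k$, $B_k(z)=z/k+1$, for which $B_k^{-1}\circ A_k(z)=z-k$), so the discrepancy is a typo in the statement rather than a gap in your proof.
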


This result may be understood as the compactness of certain
orbit space. 
Let us denote the maps with non-constant reduction by 
$$\Rat_d^*:=\{ f \in \oRatd : \deg \tilde{f} \neq 0 \}.$$
Then post-composition by $\operatorname{PSL(2,\C)}$ leaves $\Rat_d^*$ invariant.
An equivalent formulation of the above proposition is the following:

\begin{corollary}\label{cor:cmpt}
  Each $\operatorname{PSL(2,\C)}$-orbit, under post-composition, 
  is closed in $\Rat_d^*$.
  The quotient topological space $\llbracket \Rat_d^*]:=\operatorname{PSL(2,\C)} \backslash \Rat_d^*$
is compact. 
\end{corollary}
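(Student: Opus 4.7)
The plan is to derive both assertions from Proposition~\ref{p:post-quotient}. The first observation is that its conclusion extends verbatim to any sequence in $\Rat_d^*$ (not only in $\Rat_d$): the argument in \cite{DeMarco14} relies only on the compactness of $\oRat_d$ together with the behaviour of the reduction under post-composition, and neither ingredient uses non-degeneracy of the $f_k$. I would make this extension explicit at the outset and then apply it below as ``the extended proposition''.

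For compactness of the quotient, given a sequence of classes in $\sl2c \backslash \Rat_d^*$, I would lift to representatives $f_k \in \Rat_d^*$ and apply the extended proposition to produce a subsequence $\{f_{k_i}\}$ and $A_i \in \sl2c$ with $A_i \circ f_{k_i} \to \varphi \in \Rat_d^*$. Since the quotient map is continuous and the classes of $f_{k_i}$ and of $A_i \circ f_{k_i}$ coincide, both sequences of classes converge to the class of $\varphi$ in $\llbracket \Rat_d^*]$.

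For closedness of $\sl2c$-orbits, suppose $g \in \Rat_d^*$ lies in the closure of $\sl2c \cdot f$ for some $f \in \Rat_d^*$, witnessed by $A_k \circ f \to g$. I would apply the uniqueness assertion of the extended proposition to the constant sequence $f_k \equiv f$, with the two rescalings $A_k$ (whose limit is $g \in \Rat_d^*$) and $B_k = \operatorname{id}$ (whose limit is $f \in \Rat_d^*$). The uniqueness then forces $B_k^{-1} \circ A_k = A_k$ to converge to some $A \in \sl2c$, and passing to the limit gives $g = A \circ f \in \sl2c \cdot f$.

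The main hurdle is justifying the extension of Proposition~\ref{p:post-quotient} from $\Rat_d$ to $\Rat_d^*$. A clean alternative that avoids reopening the proof in \cite{DeMarco14} is to establish properness of the post-composition action on $\Rat_d^*$ directly: if $A_k \in \sl2c$ were unbounded while $f_k \to f$ and $A_k \circ f_k \to g$ all in $\Rat_d^*$, a subsequence of $A_k$ would tend to a degenerate M\"obius limit $\overline{A} \in \partial\Rat_1$ of constant reduction; since $f \in \Rat_d^*$ has non-constant reduction, the composition formula of \S\ref{s:composition} forces $\overline{A} \circ f$ to have constant reduction, contradicting $g \in \Rat_d^*$. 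Properness yields a Hausdorff quotient with closed orbits, and combined with Proposition~\ref{p:post-quotient} applied to $\Rat_d$-approximations of any given sequence in $\Rat_d^*$ it delivers the compactness claim.
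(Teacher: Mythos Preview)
The paper does not give a proof of this corollary; it simply presents it as ``an equivalent formulation'' of Proposition~\ref{p:post-quotient}, and the subsequent remark invokes GIT (stability under the post-composition action) as an alternative justification. Your proposal fills in what the paper leaves implicit, and it is correct.

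You are right to flag that Proposition~\ref{p:post-quotient} is stated only for sequences in $\Ratd$, not $\Rat_d^*$, so a literal appeal to it does not suffice. Your properness argument via the composition formula of \S\ref{s:composition} is clean and correct: if $A_k \to \overline{A} \in \partial\Rat_1$ while $f_k \to f \in \Rat_d^*$, then $(\overline{A},f)$ is outside the indeterminacy locus $I(1,d)$ (since $\tilde f$ is non-constant), so $A_k \circ f_k \to \overline{A} \circ f$, whose reduction is the constant $\tilde{\overline{A}}$, contradicting $A_k \circ f_k \to g \in \Rat_d^*$. This gives closed orbits and a Hausdorff quotient.

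The one place that could use another sentence is the compactness step via ``$\Ratd$-approximations'': as written it is not quite complete, since closeness of $\hat f_k$ to $f_k$ does not control $A_i \circ \hat f_k$ versus $A_i \circ f_k$ when $A_i$ is unbounded. Two clean fixes: (i) once properness gives a Hausdorff, second-countable (hence metrizable) quotient, density of $q(\Ratd)$ together with Proposition~\ref{p:post-quotient} shows the quotient is sequentially compact, hence compact; or (ii) extend Proposition~\ref{p:post-quotient} directly by observing that post-composition by $A\in\Rat_1$ preserves $H_f$ and acts on the reduction by $\widetilde{A\circ f}=A\circ\tilde f$, so after passing to a subsequence with $\deg\tilde f_k\equiv e$ one applies the proposition in degree $e$ to $\{\tilde f_k\}\subset\Rat_e$ and reassembles. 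Either route completes the argument.
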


\begin{remark}
  In fact, $\Rat_d^*$ is the set of  GIT-stable elements of $\oRat_d$, under post-composition by  $\operatorname{PSL(2,\C)}$. There are no strictly semi-stable
  elements for this action. 
  Hence, $\llbracket \Rat_d^*]$ is a geometric quotient which
  is, naturally, a projective variety.
\end{remark}

We  denote the
$\operatorname{PSL(2,\C)}$-orbit of $f \in \Rat_d^*$ under the
post-composition action by 
$$\llbracket f ] \in \llbracket \Rat_d^*].$$

Let $\{f_k\}\subset\Rat_d$ be a sequence such that $\llbracket f_k^{\circ n}]$ converges to $\llbracket\varphi_n]$ in $\llbracket \Rat_{d^n}^*]$ for each $n\ge 1$. 
We say that 
 $\{ A_{n,k}: n,k \ge 1 \} \subset \Rat_1$ 
 is a \emph{collection of post-scalings for $\{f_k\}$} if for all $n \ge 1$,
 as $k \to \infty$, 
   $$A_{n,k} \circ f^{\circ n}_k \to \varphi_n.$$
 The following result is similar to \cite[Lemma 2.5]{DeMarco14}. 
\begin{lemma}
  \label{l:decomposition}
  Assume that $\llbracket f_k^{\circ n}]$ converges to $\llbracket\varphi_n]$ in $\llbracket \Rat_{d^n}^*]$ for each $n\ge 1$. 
  Then for all $n\ge 1$, there exist $\varphi_{n,n+1} \in \Rat_d^*$ such that
  $$\varphi_{n+1} = \varphi_{n,n+1} \circ \varphi_n. $$ 
  Moreover,  if $\{A_{n,k}\}$ is a collection of post-scalings for $\{f_k\}$,
  then
  $$A_{n+1,k} \circ f_k \circ A^{-1}_{n,k} \to \varphi_{n,n+1}.$$
\end{lemma}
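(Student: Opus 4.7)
The plan is to set $B_{n,k} := A_{n+1,k}\circ f_k\circ A_{n,k}^{-1}\in\Ratd$, so that by associativity
\[
B_{n,k}\circ (A_{n,k}\circ f_k^{\circ n}) \;=\; A_{n+1,k}\circ f_k^{\circ(n+1)}.
\]
By compactness of $\oRatd\cong\P^{2d+1}$, I extract a subsequence along which $B_{n,k}\to\psi$ for some $\psi\in\oRatd$. By hypothesis, the right-hand side converges to $\varphi_{n+1}\in\operatorname{Rat}_{d^{n+1}}^*$, while the inner factor on the left converges to $\varphi_n\in\operatorname{Rat}_{d^n}^*$. Since $\tilde\varphi_n$ is non-constant, the pair $(\psi,\varphi_n)$ lies outside the indeterminacy locus $I(d,d^n)$ of the composition map, \emph{regardless of the value of $\psi$}. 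Invoking the continuity of composition outside $I(d,d^n)$ recalled in \S\ref{s:composition}, I pass to the limit on both sides to conclude $\psi\circ\varphi_n=\varphi_{n+1}$.

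Next I verify that $\psi\in\operatorname{Rat}_d^*$. Since the reduction of a composition (outside indeterminacy) equals the composition of reductions, $\tilde\varphi_{n+1}=\tilde\psi\circ\tilde\varphi_n$. Because $\tilde\varphi_{n+1}$ is non-constant and $\tilde\varphi_n$ is surjective onto $\oC$, $\tilde\psi$ itself must be non-constant, so $\psi\in\operatorname{Rat}_d^*$. Setting $\varphi_{n,n+1}:=\psi$ gives the desired decomposition $\varphi_{n+1}=\varphi_{n,n+1}\circ\varphi_n$.

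To upgrade subsequential convergence to convergence of the full sequence $\{B_{n,k}\}_k$, it suffices to show uniqueness: any $\psi'\in\operatorname{Rat}_d^*$ satisfying $\psi'\circ\varphi_n=\varphi_{n+1}$ must equal $\psi$. Surjectivity of $\tilde\varphi_n$ applied to $\tilde\psi\circ\tilde\varphi_n=\tilde\psi'\circ\tilde\varphi_n$ yields $\tilde\psi=\tilde\psi'$. For the hole data, I apply the depth-composition formula \eqref{eq:depth-composition}: given any $p\in\oC$, pick $h\in\tilde\varphi_n^{-1}(p)$ (non-empty by surjectivity); the equality $d_h(\psi\circ\varphi_n)=d_h(\psi'\circ\varphi_n)$ simplifies to $\deg_h(\tilde\varphi_n)\cdot d_p(\psi)=\deg_h(\tilde\varphi_n)\cdot d_p(\psi')$ (the terms involving $d_h(\varphi_n)$ cancel), and since $\deg_h(\tilde\varphi_n)\ge 1$, I conclude $d_p(\psi)=d_p(\psi')$. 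Hence $\psi=\psi'$, and the whole sequence $B_{n,k}$ converges to $\varphi_{n,n+1}$.

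I expect the main obstacle to be this uniqueness step: one must carefully recover the hole/depth data of $\psi\in\operatorname{Rat}_d^*$ from that of $\psi\circ\varphi_n\in\operatorname{Rat}_{d^{n+1}}^*$. The depth-composition formula makes this routine once $\tilde\varphi_n$ is known to be surjective, and the same formula handles both the case $h\in\hole(\varphi_n)$ and $h\notin\hole(\varphi_n)$ uniformly. Everything else is a direct application of continuity of composition away from the indeterminacy locus, as already recorded in \S\ref{s:composition}.
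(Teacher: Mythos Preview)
Your proof is correct and takes essentially the same approach as the paper's. The only cosmetic difference is that the paper first invokes Proposition~\ref{p:post-quotient} to produce a subsequential limit already known to lie in $\Rat_d^*$, whereas you take an arbitrary accumulation point in $\oRatd$ and deduce $\Rat_d^*$-membership a posteriori from $\tilde\psi\circ\tilde\varphi_n=\tilde\varphi_{n+1}$; the uniqueness step (surjectivity of $\tilde\varphi_n$ for the reduction, formula~\eqref{eq:depth-composition} for the depths) is identical in both.
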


\begin{proof}
  For any $n \ge 1$,  
  let $\{ A_{n,k}\} \subset \Rat_1$
  be a collection of scalings for $\{f_k\}$. 
  Modulo passing to a subsequence,
  by Proposition \ref{p:post-quotient} applied
  to $\{ f_k \circ A^{-1}_{n,k}\}$, there exists $\{B_{n+1,k}\}$
  such that, as $k \to \infty$,
  $$B_{n+1,k} \circ f_k \circ A^{-1}_{n,k} \to \psi_{n,n+1},$$
  for some $\psi_{n,n+1} \in \Rat^*_{d}$.
  Continuity of composition (\S \ref{s:composition}) implies that
  $B_{n+1,k} \circ f_k^{\circ n+1} \to \psi_{n,n+1} \circ \varphi_n$.
  Again by Proposition \ref{p:post-quotient}, we conclude that 
  $A_{n+1,k} \circ B^{-1}_{n+1,k} \to M_{n+1}$,
  for some M\"obius transformation $M_{n+1} \in \Rat_1$.
  Let $\varphi_{n,n+1} := M_{n+1} \circ \psi_{n,n+1}$.
  Thus, a map $\varphi_{n,n+1} \in \oRatd$ as in the statement of the Lemma
  exists.

  Note that any accumulation point $\varphi \in \oRatd$
  of $A_{n+1,k} \circ f_k \circ A^{-1}_{n,k}$ satisfies that $\varphi \circ \varphi_n = \varphi_{n+1}$.
  To finish the proof, it suffices to show that $\varphi$ is uniquely determined by this property, i.e. $\varphi = \varphi_{n,n+1}$.
  Indeed, surjectivity   of $\tilde{\varphi}_n$ yields that $\tilde{\varphi} = \tilde{\varphi}_{n,n+1}$. Moreover, the depth $d_h(\varphi)$, for any $h \in \oC$,
  is completely determined by  the equation 
  $$d_h(\varphi_{n+1}) = d_h(\varphi_n) \cdot d + \deg_h \tilde{\varphi}_n \cdot d_{\tilde{\varphi}_n(h)}(\varphi).$$
  Therefore,
  $\varphi = \varphi_{n,n+1}$.
  \end{proof}

\subsection{Depth formulas}
\label{s:depth}

The following result is the complex analytic analogue of \cite[Lemma 2.9]{Kiwi23}. 

\begin{lemma}\label{lem:s}
  Let $\{f_k\} \subset \Ratd$ be a sequence  converging to $g\in\partial{\mathrm{Rat}}_d$  with constant reduction.
  Assume that
  $\llbracket f_k] \to \llbracket \varphi]$. Then there exists
  $a \in \oC$ such that
  $$\hole(g) = \hole(\varphi) \cup \tilde{\varphi}^{-1} (a).$$
  Moreover, $$d_h(g) =
  \begin{cases}
    d_h(\varphi), & \text{ if } h \notin \tilde{\varphi}^{-1}(a), \\
    d_h(\varphi) + \deg_h \tilde{\varphi}, & \text{ otherwise.}
  \end{cases}
  $$
  Furthermore, if $A_k \circ f_k \to \varphi$ in $\Rat_d^*$, then $A_k  \to A\in\partial\Rat_1$ with $\tilde{A}=a$ and $\hole(A)=\{c\}$ where
  $\tilde{g} \equiv c$.
\end{lemma}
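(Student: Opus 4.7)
The plan is to first establish the ``furthermore'' clause---the convergence $A_k\to A\in\partial\Rat_1$ with $\hole(A)=\{c\}$ and $\tilde A\equiv a$---and then read off the hole and depth formulas for $g$ from the identity $g=A^{-1}\circ\varphi$ via the composition formula \eqref{eq:depth-composition}. The main work is bookkeeping the indeterminacy locus $I(1,d)$ at the two composition steps; both checks will reduce to the hypothesis that $\tilde\varphi$ is non-constant.

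By compactness of $\oRat_1=\P^3$, any subsequence of $\{A_k\}$ has a further subsequence converging to some $A\in\oRat_1$. I first rule out $A\in\Rat_1$: in that case $\hole(A)=\emptyset$, so $(A,g)\notin I(1,d)$ and continuity of composition gives $\varphi=A\circ g$, whose reduction $\tilde A\circ\tilde g\equiv A(c)$ is constant---contradicting that $\tilde\varphi$ is non-constant. Hence $A\in\partial\Rat_1$; write $\tilde A\equiv a$ and $\hole(A)=\{h_A\}$. Next I rule out $h_A\neq c$: if $h_A\neq c$, then $\tilde g\equiv c\notin\hole(A)$, so $(A,g)\notin I(1,d)$, again $\varphi=A\circ g$, and the reduction formula of \S\ref{s:composition} forces $\tilde\varphi\equiv a$, a contradiction. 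Therefore $\hole(A)=\{c\}$, so $A^{-1}\in\partial\Rat_1$ with $\hole(A^{-1})=\{a\}$ and $\widetilde{A^{-1}}\equiv c$; moreover $A_k^{-1}\to A^{-1}$ since inversion is a projective linear involution on the coefficients of $\oRat_1$.

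Since $\tilde\varphi$ is non-constant we have $\tilde\varphi\not\equiv a$, so $(A^{-1},\varphi)\notin I(1,d)$ and continuity of composition gives $f_k=A_k^{-1}\circ(A_k\circ f_k)\to A^{-1}\circ\varphi$, hence $g=A^{-1}\circ\varphi$. Since $d_y(A^{-1})=1$ for $y=a$ and $0$ otherwise, \eqref{eq:depth-composition} reads
$$d_h(g)=d_h(\varphi)+\deg_h(\tilde\varphi)\cdot d_{\tilde\varphi(h)}(A^{-1}),$$
which is precisely the piecewise depth formula in the statement; taking supports yields $\hole(g)=\hole(\varphi)\cup\tilde\varphi^{-1}(a)$. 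Finally, to upgrade subsequential to full convergence of $\{A_k\}$, note that an element of $\partial\Rat_1$ is pinned down by its single hole and its reduction, so it suffices that $a$ be uniquely determined: if two subsequential limits gave values $a\neq a'$, then since $\tilde\varphi$ is non-constant (hence surjective of positive degree), $\tilde\varphi^{-1}(a)\setminus\tilde\varphi^{-1}(a')$ is non-empty, and on any $h$ there the two predicted values of $d_h(g)$ would differ by $\deg_h\tilde\varphi\ge 1$---impossible.
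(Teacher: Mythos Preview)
Your proof is correct and follows essentially the same route as the paper: pass to a subsequential limit of $\{A_k\}$, show it must lie in $\partial\Rat_1$ with hole $c$ and some constant reduction $a$, then recover $g$ as $A^{-1}\circ\varphi$ and read off the holes and depths. The only differences are cosmetic: you invoke the composition/depth formula \eqref{eq:depth-composition} where the paper writes out the factorization $H_g=(H_B\circ\tilde\varphi)\,H_\varphi$ explicitly, and you spell out the uniqueness of $a$ (via conflicting depth values) where the paper simply notes that $a$ is determined by $g$ and $\varphi$.
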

\begin{proof}
  Consider a sequence $\{A_k\} \subset \Rat_1$ such that $A_k \circ f_k \to \varphi$.
  Passing to a subsequence, suppose $A_{k_i} \to A \in \partial \Rat_1$ and write $a:=\tilde{A}$.
  Then $\hole(A) = \{c\}$ where $\tilde{g} \equiv c$; for otherwise $A_{k_i} \circ f_{k_i}(z)$ would converge
  to $a$ outside $\hole(\varphi)$, and hence $\tilde{\varphi}$ would be constant, contrary to our hypothesis.

  It follows that $A^{-1}_{k_i}$ converges to some $B\in\partial\Rat_1$ with reduction $c$ and hole $a$, i.e. $H_B(z) = z-a$. Hence,
  $$A^{-1}_{k_i} \circ (A_{k_i} \circ f_{k_i}) \to (H_B \circ \tilde{\varphi}) H_\varphi  \tilde{\varphi} \cdot c = g.$$
  Thus, the first two assertions hold. Since $a$ is independent of the subsequence,  the last assertion also holds. 
\end{proof}

Given $g \in \partial \Rat_d$, we define the  \emph{algebraically exceptional set}  $\ex_g\subset\oC$  by 
$$
\ex_g:=
\begin{cases}
\{\tilde g\}\ \ &\text{if}\ \deg\tilde g=0,\\
\emptyset\ \ &\text{if}\ \deg\tilde g\ge 1; 
\end{cases}$$ 
The next lemma provides a way to compute depths by counting preimages
of non-exceptional points. It states precisely the sense in which 
holes ``map  onto $\oC$'' by a degenerate map (c.f.  \cite[Lemmas 4.5 and 4.6]{DeMarco05}). 

\begin{lemma}\label{lem:pre-number}
Let $\{f_k\} \subset \Ratd$ be a sequence  converging to $g\in\partial{\mathrm{Rat}}_d$ and 
  assume that
  $\llbracket f_k] \to \llbracket \varphi]$.
  Pick $z_0\in \oC$. Then 
  for any $w \in \oC$, there exists an arbitrarily small neighborhood $U$
  of $w$, such that the following hold for all  sufficiently large $k$:
  \begin{enumerate}
  \item  If $z_0\not\in \ex_g$, then, counting with multiplicities,
  $$\#(f_k^{-1} (z_0) \cap U) =
  \begin{cases}
  d_w(g) &
\text{ if } \tilde{g}(w) \neq z_0, \\
  d_w(g) + \deg_w \tilde{g} & \text{ if } \tilde{g}(w) = z_0.
  \end{cases}
 $$  
 \item Counting with multiplicities,
  $$\#(f_k^{-1} (z_0) \cap U)\le  d_w(\varphi)+\deg_w\tilde\varphi\le d_w(g)+\deg_w\tilde\varphi.$$
  
 \end{enumerate} 
\end{lemma}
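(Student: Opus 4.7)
The plan is to rewrite $\#(f_k^{-1}(z_0) \cap U)$ as the number of zeros, counted with multiplicity, of an explicit polynomial in $z$ depending on $f_k$, and then invoke Hurwitz's theorem. After conjugating in the domain and the range if necessary, I assume $z_0, w \in \C$; the case in which either is $\infty$ is handled identically by swapping numerator and denominator.

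For part (1), write $f_k = P_k/Q_k$ for coefficient representatives so that $P_k \to P = H_g \tilde P$ and $Q_k \to Q = H_g \tilde Q$, where $\tilde g = [\tilde P : \tilde Q]$ is the reduction. The preimages of $z_0$ under $f_k$ are the zeros of
\[
F_k(z) := P_k(z) - z_0\, Q_k(z) \;\longrightarrow\; F(z) := H_g(z)\bigl(\tilde P(z) - z_0 \tilde Q(z)\bigr).
\]
The hypothesis $z_0 \notin \ex_g$ is exactly what forces $F \not\equiv 0$: the identity $\tilde P \equiv z_0 \tilde Q$ would mean $\tilde g \equiv z_0$ with $\deg \tilde g = 0$, which is excluded. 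For a small disc $U$ around $w$ on which $F$ vanishes only at $w$, Hurwitz's theorem then gives, for all sufficiently large $k$,
\[
\#(F_k^{-1}(0) \cap U) \;=\; \operatorname{ord}_w F \;=\; d_w(g) \;+\; \operatorname{ord}_w(\tilde P - z_0 \tilde Q),
\]
and the last summand equals $\deg_w \tilde g$ when $\tilde g(w) = z_0$ and $0$ otherwise, yielding the two cases of (1).

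For part (2), apply Proposition~\ref{p:post-quotient} to produce $\{A_k\} \subset \Rat_1$ with $g_k := A_k \circ f_k \to \varphi$ in $\Rat_d^*$; when $\deg \tilde g \ge 1$ I simply take $A_k = \id$, so that $\varphi$ lies in the post-composition orbit of $g$. Writing $g_k = R_k/S_k$ with $R_k \to H_\varphi \tilde P_\varphi$ and $S_k \to H_\varphi \tilde Q_\varphi$, the equation $f_k(z) = z_0$ becomes $G_k(z) := R_k(z) - A_k(z_0)\, S_k(z) = 0$. By compactness of $\oC$ I pass to a subsequence so that $A_k(z_0) \to b \in \oC$. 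Since $\deg \tilde\varphi \ge 1$ precludes $\tilde\varphi \equiv b$, the limit $H_\varphi \cdot (\tilde P_\varphi - b\tilde Q_\varphi)$ of $G_k$ is not identically zero, and Hurwitz gives along the subsequence
\[
\#(G_k^{-1}(0) \cap U) \;=\; d_w(\varphi) + \operatorname{ord}_w(\tilde P_\varphi - b \tilde Q_\varphi) \;\le\; d_w(\varphi) + \deg_w \tilde\varphi
\]
for suitably small $U$ and large $k$. Since this upper bound is independent of $b$, a standard contradiction argument upgrades it to the full sequence. The second inequality $d_w(\varphi) \le d_w(g)$ is supplied by Lemma~\ref{lem:s} when $\tilde g$ is constant, and is an equality otherwise because then $\varphi$ and $g$ differ by post-composition by a Möbius.

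The main technical point is the subcase $z_0 = c$ of (2), where $c$ is the constant value of $\tilde g$: then, by Lemma~\ref{lem:s}, the hole of the limiting Möbius $A$ coincides with $z_0$, so $A_k(z_0)$ is a $0/0$ expression in the limit and need not converge. This is why the subsequence extraction is essential; the $b$-independence of the upper bound $d_w(\varphi) + \deg_w \tilde\varphi$ is what allows the bound to survive the passage back to the full sequence. Everything else reduces to bookkeeping the orders of vanishing of the limit polynomial factors $H_g$, $\tilde P - z_0\tilde Q$, and their counterparts for $\varphi$.
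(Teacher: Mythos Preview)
Your argument is correct and follows essentially the same route as the paper: for (1) you count zeros of $P_k - z_0 Q_k \to H_g(\tilde P - z_0 \tilde Q)$ via Hurwitz, exactly as the paper does; for (2) you rescale by $A_k$, pass to a subsequence along which $A_k(z_0)\to b$, and apply the same zero-counting to $A_k\circ f_k$, then invoke Lemma~\ref{lem:s} for the inequality $d_w(\varphi)\le d_w(g)$. The only cosmetic differences are that you name Hurwitz explicitly and spell out the contradiction argument that passes from the subsequential bound back to the full sequence, whereas the paper leaves both implicit; and the paper dispatches the case $\deg\tilde g\ge 1$ in (2) by citing (1) directly rather than by taking $A_k=\id$, which amounts to the same thing.
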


\begin{proof}
  Let $P,Q \in \C[z]$ be two degree $d$ polynomials such that $g=P/Q$.
  Let $H$ be a  common divisor of $P$ and $Q$ with maximal degree.
  Then
  $P=H\tilde{P}$, $Q= H \tilde{Q}$ and  $\tilde{g} =\tilde{P}/\tilde{Q}$ for some relatively
  prime polynomials $\tilde{P},\tilde{Q}$. If $z_0 \notin \ex_g$, we have that $\tilde{g} \not\equiv z_0$. Hence, the multiplicity of $w$ as a solution of 
  $$0=P-z_0Q=H(\tilde{P}-z_0 \tilde{Q})$$
  is $d_w(g)$ if $\tilde{g}(w) \neq z_0$, and  $d_w(g) + \deg_w \tilde{g}$ if  $\tilde{g}(w)=z_0$. 
  Thus writing $f_k=P_k/Q_k$ and observing that the number of solutions
  of $P_k - z_0Q_k$ in a small neighborhood of $w$ converge
  to the multiplicity of $w$ as a solution of $P-z_0Q=0$, we obtain statement (1). 
  
  In view of the above, for statement (2), it suffices to consider the case where $\deg\tilde g=0$.  Let $\{A_k\}\subset\Rat_1$ such that $A_k\circ f_k\to\varphi$. Passing to a subsequence in $k$, we can assume that $A_k(z_0)\to a$. Then given a small neighborhood $U$ of $w$, we have that, for all  sufficiently large $k$,
  $$\#(f_k^{-1} (z_0) \cap U)=\#((A_k\circ f_k)^{-1}(A_k(z_0))\cap U)=
  \begin{cases}
  d_w(\varphi)+\deg_w\tilde\varphi\ &\text{if}\ a=\tilde\varphi(w),\\
  d_w(\varphi) \ &\text{if}\  a\not=\tilde\varphi(w),\\
  \end{cases}$$
  counted with multiplicities. 
  Then statement (2) follows from Lemma \ref{lem:s}.
\end{proof}


\subsection{Pull-back of measures}
\label{s:pull-back}
Given a continuous function $\varphi : \oC \to \R$ and
a rational map $f:\oC\to\oC$, the \emph{push-forward} of $\varphi$ by $f$ is
the continuous function:
$$f_* \varphi (z) := 
\begin{cases}
\sum\limits_{w \in f^{-1}(z)} \varphi(w)\ \ &\text{if}\ \deg f \ge 1,\\
\ \ \ 0 \ &\text{if}\ \deg f=0.
\end{cases}$$
Clearly, $f_*$ is a non-negative bounded linear operator on the space of
continuous function with the $\sup$-norm.
Its dual is the \emph{pull-back $f^*$}, whose action
on a (positive Radon) measure $\mu$ in $\oC$
is determined by the property that
$$\int \varphi(w) d f^* \mu (w) = \int f_* \varphi (z) d \mu(z),$$
for all continuous functions $\varphi$.
Taking $\varphi \equiv 1$, yields $f^* \mu (\oC) = \deg f \cdot \mu(\oC)$.
All measures considered here are non-negative Radon measures.

For any $g \in \partial \Rat_d$,  we can obtain a natural measure of total mass $d-\deg{\tilde{g}}$ induced by  the depths of its holes:
\begin{definition}\label{def:depthmeasure}
Given $g \in \partial \Rat_d$, we say that the measure 
$$\eta_g := \sum_{h \in \hole(g)} d_h(g) \delta_h$$
  is the \emph{depth measure} of $g$.
\end{definition}

Consider $g \in \oRatd$. 
If $g\in\partial\Ratd$, recall from \S\ref{s:depth} the algebraically exceptional set  $\ex_g\subset\oC$ for $g$. If $g \in \Ratd$, then the dynamically exceptional set $E_g$ of $g$, is the one formed by all finite grand orbits, under $g$. 
We say  that a measure $\mu$ in $\oC$ is \emph{non-exceptional} for $g \in \oRatd$ if $\mu(\ex_g) =0$ when $g \in \partial \Ratd$ and $\mu(E_g)=0$ when
$g \in \Ratd$. 
Moreover, given an element in $\Pi_{i=1}^\infty\oRat_{s_i}$, we say a measure $\mu$ in $\oC$ is non-exceptional for this element if $\mu$ is non-exceptional for each coordinate. Following DeMarco \cite[Section 3]{DeMarco05}, we introduce the pull-back of measures by $g$.  
\begin{definition}
  Given $g \in \partial \Rat_d$ and a non-exceptional measure $\mu$ for $g$, 
  we say that
  $$g^* \mu = \tilde{g}^* \mu + \mu(\oC) \cdot \eta_g $$
  is the \emph{pull-back of $\mu$} under $g$. 
  \end{definition}
Given $g \in \partial \Rat_d$ and a continuous function $\varphi : \oC \to \R$, we define the \emph{push-forward of $\varphi$ by $g$} 
$$g_* \varphi :=
\tilde{g}_*  \varphi  + \sum_{h \in \hole(g)} d_h(g) \varphi(h);$$
then  for any a non-exceptional measure $\mu$ for $g$, one obtains that 
$$\int \varphi d g^* \mu = \int g_* \varphi d \mu.$$

Although not explicitly discussed in DeMarco \cite{DeMarco05} and DeMarco-Faber \cite{DeMarco14},
a key fact is that
the pull-back defined above is the continuous extension
of the pull-back of (non-exceptional) measures:

\begin{proposition}
  \label{p:pullback-continuity}
  Let  $g \in \partial\Ratd$ and $\mu$ be a
  non-exceptional measure for $g$.
  Then  $f^*\nu$ converges to $g^* \mu$, as $f \in \Rat_d$ converges to $g$ and
  $\nu$ converges to $\mu$.
\end{proposition}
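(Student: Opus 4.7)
The plan is to verify weak-$*$ convergence $f^{*}\nu \to g^{*}\mu$ by testing against an arbitrary continuous $\varphi:\oC\to\R$. Using the duality $\int \varphi\,d(f^{*}\nu)=\int f_{*}\varphi\,d\nu$ and the definition $g^{*}\mu = \tilde g^{*}\mu + \mu(\oC)\eta_g$, I would reduce the goal to
$$\int f_{*}\varphi\,d\nu \longrightarrow \int g_{*}\varphi\,d\mu,\qquad g_{*}\varphi := \tilde g_{*}\varphi + \sum_{h\in\hole(g)} d_h(g)\,\varphi(h).$$
Since $g_{*}\varphi$ is continuous on $\oC$ (either $\tilde g_{*}\varphi$ is the push-forward by a non-constant rational map, or $\tilde g_{*}\varphi\equiv 0$), weak convergence $\nu\to\mu$ already gives $\int g_{*}\varphi\,d\nu \to \int g_{*}\varphi\,d\mu$, so it suffices to show
$$E(f,\nu) := \int (f_{*}\varphi - g_{*}\varphi)\,d\nu \longrightarrow 0.$$

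The core input would be uniform convergence $f_{*}\varphi \to g_{*}\varphi$ on compact subsets of $\oC\setminus \ex_g$. For a fixed $z_0\notin\ex_g$, I would apply Lemma~\ref{lem:pre-number}(1) along any sequence $f_k\to g$ (using the trivial post-scaling when $\deg\tilde g\ge 1$, and post-scalings furnished by Proposition~\ref{p:post-quotient} otherwise) to obtain, for every $w\in\oC$, a neighborhood $U_w$ in which $\#(f^{-1}(z_0)\cap U_w)$ equals $d_w(g)$, with an extra $\deg_w\tilde g$ whenever $\tilde g(w)=z_0$, once $f$ is close enough to $g$. Extracting a finite subcover of $\oC$ and summing $\varphi$ over the $d$ preimages would yield the pointwise limit $f_{*}\varphi(z_0)\to g_{*}\varphi(z_0)$; a standard compactness and uniform-continuity argument (equivalently, continuous dependence on $(f,z)$ of the zero divisor of the polynomial $P_f(w)-zQ_f(w)$, which becomes identically zero precisely when $z\in\ex_g$) would promote this to uniform convergence on any compact $K\subset\oC\setminus\ex_g$.

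With this in hand, the case $\ex_g=\emptyset$, i.e.\ $\deg\tilde g\ge 1$, is immediate, since the uniform convergence then holds on all of $\oC$ and $E(f,\nu)\to 0$ at once. In the remaining case $\ex_g=\{c\}$ with $\tilde g\equiv c$, I would exploit $\mu(\{c\})=0$: for each $\epsilon>0$, choose an open $U\ni c$ with $\mu(\overline U)<\epsilon$; the Portmanteau theorem then forces $\nu(\overline U)<2\epsilon$ for $\nu$ close to $\mu$, and the splitting
$$|E(f,\nu)|\le \int_{\oC\setminus U}|f_{*}\varphi-g_{*}\varphi|\,d\nu + 2d\,\|\varphi\|_\infty\,\nu(\overline U)$$
sends both terms to zero, the first by uniform convergence on the compact set $\oC\setminus U$, the second by the Portmanteau bound.

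The hard part is precisely this last case $\ex_g=\{c\}$: preimages of $c$ under $f$ may scatter throughout $\oC$ as $f\to g$, so $f_{*}\varphi$ admits no pointwise limit at $c$ and the uniform convergence cannot be extended across $c$. This is exactly why the non-exceptionality hypothesis $\mu(\ex_g)=0$ is built into the definition of $g^{*}\mu$, and the Portmanteau cutoff is the device that transfers this hypothesis to $\nu$ through weak convergence.
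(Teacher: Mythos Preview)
Your proposal is correct and follows essentially the same strategy as the paper: reduce to showing $\int f_*\varphi\,d\nu \to \int g_*\varphi\,d\mu$, establish uniform convergence of $f_*\varphi$ to $g_*\varphi$ on compact subsets of $\oC\setminus\ex_g$ via Lemma~\ref{lem:pre-number}(1), and then control the contribution near $\ex_g$ using non-exceptionality together with weak convergence (you invoke Portmanteau, the paper chooses $W$ with $\mu(\partial W)=0$; these are interchangeable). The only cosmetic difference is that you first peel off $\int g_*\varphi\,d\nu \to \int g_*\varphi\,d\mu$ by weak convergence and then bound $E(f,\nu)$, whereas the paper splits $\left|\int f_*\varphi\,d\nu-\int g_*\varphi\,d\mu\right|$ directly; both reach the same estimate.
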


\begin{proof}
Let $W\subset\oC$ be a small neighborhood of the point in $\ex_g$ if  $\ex_g\not=\emptyset$; and set $W=\emptyset$ if $\ex_g=\emptyset$. Consider
$z \in \oC \setminus W$.
As $\Ratd \ni f \to g$, the set $f^{-1}(z)$ converges
to $\hole(g) \cup \tilde{g}^{-1}(z)$; each point is counted with multiplicity. 
 More precisely, given $w \in \hole(g) \cup \tilde{g}^{-1}(z)$, let $m_w := d_w(f)$ if
$w \notin \tilde{g}^{-1}(z)$, and $m_w := d_w(f) + \deg_w \tilde{g}$ otherwise. By Lemma \ref{lem:pre-number} (1), given any small neighborhood of $w$,   for all $f$ sufficiently
close to $g$, this neighborhood contains $m_w$
preimages of $z$ under $f$, counted with multiplicities.

Now, consider  a nonnegative continuous test function $\varphi:\oC\to\R$.
From the previous paragraph, 
as $f \to g$, we have that $f_* \varphi: \oC \setminus W \to \R $ pointwise converges to
the continuous function $g_*\varphi :\oC \setminus W \to \R$.  
Since $\oC \setminus W$ is compact, the convergence is, in fact, uniform.

 Given $\varepsilon >0$,
assume that $W$ is sufficiently small with $\mu(\partial W)=0$ and $\nu$ is sufficiently close
to $\mu$ so that both $\mu(W)$ and $\nu(W)$ are less than $\varepsilon$. 
Then 
  \begin{eqnarray*}
    \left|\int_{\oC} \varphi  (f^* d\nu -   g^* d\mu) \right| & =
    &\left|\int_{\oC} f_* \varphi d\nu - \int_{\oC} g_* \varphi d\mu \right|\\
    & =& \left|\int_{\oC \setminus W} f_* \varphi d\nu - \int_{\oC \setminus W} g_* \varphi d\mu + \int_{W}  f_* \varphi d\nu -  \int _{W}g_* \varphi d\mu \right|\\
    &\le& \left|\int_{\oC \setminus W} f_* \varphi d\nu - \int_{\oC \setminus W} g_* \varphi d\mu\right| + \left|\int_{W}  f_* \varphi d\nu\right|+\left|  \int _{W}g_* \varphi d\mu \right|\\
    &\le&\left|\int_{\oC \setminus W} f_* \varphi d\nu - \int_{\oC \setminus W} f_* \varphi d\mu\right|+\left| \int_{\oC \setminus W} (f_* \varphi-g_* \varphi) d\mu\right| + d \|\varphi\|_\infty 2 \varepsilon, 
  \end{eqnarray*}
  where the last inequality follows from the fact that $\|F_* \varphi\|_\infty \le d \|\varphi\|_\infty$ for any $F \in \oRatd$.
  The convergence of $\nu$ to $\mu$ and the uniform convergence of $f_* \varphi $ to $g_* \varphi $ in $\oC \setminus W$, as $f \to g$, yield that $f^* \nu \to g^* \mu$
  as $(f,\nu) \to (g,\mu)$.  
\end{proof}

\subsection{Measures of maximal entropy}
\label{s:mme}
Recall that $M^1(\oC)$ denotes the space of probability measures in $\oC$ endowed with the weak* topology. 
Given a rational map $f \in \Ratd$, there exists a unique probability
measure $\mu_f$ such that $f^* \mu_f = d \mu_f$ and $\mu_f(\cE_f)=0$ where $\cE_f$ is
the exceptional set for $f$
(i.e. the elements of $\oC$ with finite grand orbit).
Moreover, for any $\mu\in M^1(\oC)$ satisfying $\mu (\cE_f)=0$, 
$$\dfrac{1}{d^n} (f^{\circ n})^\ast \mu \to \mu_f.$$ Furthermore, $\mu_f$ is supported on the Julia set $\cJ(f)$ of $f$ and is the unique measure of maximal
entropy for $f$. This measure $\mu_f$ varies continuously with $f$. See~\cite{Freire83, Ljubich83, Mane83}. 

The fundamental property of an accumulation point $\mu$
of measures $\mu_f$ as $f$ approaches
a degenerate map was formulated by DeMarco and Faber \cite[Section 2.4]{DeMarco14} in terms
of pairs of probability measures $(\mu,\nu)$ that take into account the
changes of scales involved. More precisely,
let $(\mu ,\nu)$ be a pair of probability measures on $\oC$. For a sequence $\{A_k\}\subset\Rat_1$,   we say that a  sequence $\{\mu_k\}\subset M^1(\oC)$ \emph{converges $\{A_k\}$-weakly to $(\mu, \nu)$} if $\mu_k$ converges to $\mu$ and $(A_k)_\ast\mu_k$ converges to $\nu$.

\begin{proposition}[{\cite[Theorem 2.4]{DeMarco14}}]\label{prop:mea-CE}
  Let $\{f_k\}$ be a sequence in $\mathrm{Rat}_d$ and let $\{A_k\}$  be a sequence in $\Rat_1$ such that $A_k\circ f_k$ converges to map $\varphi \in \Rat_d^*$. 
 Assume  that the  $\{A_k\}$-weak limit of $\{\mu_{f_k}\}$ is
  $(\mu,\nu)$. Then
$$\mu =\frac{1}{d}\varphi^\ast\nu.$$
\end{proposition}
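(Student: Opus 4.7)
The plan is to derive the identity by passing to the limit in the invariance equation $f_k^\ast \mu_{f_k} = d\,\mu_{f_k}$, after rewriting it so that the given $\{A_k\}$-weak convergence can be exploited directly.

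First I would factor $f_k = A_k^{-1}\circ(A_k\circ f_k)$. Pull-back of measures is contravariant under composition, $(g\circ h)^\ast = h^\ast\circ g^\ast$, and for any M\"obius transformation $A$ one has $(A^{-1})^\ast = A_\ast$; both facts are elementary from the definitions of push-forward and pull-back (the latter because $(A^{-1})^{-1}(z)=\{A(z)\}$ is a single point of multiplicity one, so $(A^{-1})_\ast\phi=\phi\circ A$). Combining these identities yields
\[
d\,\mu_{f_k} \;=\; f_k^\ast \mu_{f_k} \;=\; (A_k\circ f_k)^\ast\,(A_k)_\ast \mu_{f_k}.
\]

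Next I would let $k\to\infty$. By hypothesis $\mu_{f_k}\to\mu$, $(A_k)_\ast\mu_{f_k}\to\nu$, and $A_k\circ f_k\to\varphi$ in $\oRat_d$. Continuity of pull-back then lets me pass to the limit on the right and obtain $\varphi^\ast\nu$. When $\varphi\in\Rat_d$ this is the standard continuity of pull-back by non-degenerate rational maps. When $\varphi\in\partial\Rat_d$ it is Proposition~\ref{p:pullback-continuity}; its non-exceptionality hypothesis is automatic here, because $\varphi\in\Rat_d^\ast$ forces $\deg\tilde\varphi\ge 1$, so $\ex_\varphi=\emptyset$ and every measure is non-exceptional for $\varphi$. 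Comparing limits gives $d\mu = \varphi^\ast\nu$, which is the desired identity.

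The main obstacle is modest: it lies in the first step, namely the compositional identity $f_k^\ast = (A_k\circ f_k)^\ast\,(A_k)_\ast$, which must be verified from the definitions of push-forward and pull-back of measures. Once this algebraic translation is in place, the statement reduces directly to the pull-back continuity established in Proposition~\ref{p:pullback-continuity} (or its classical analogue in $\Rat_d$), applied at the limit point $\varphi\in\Rat_d^\ast$ where the non-exceptionality condition is vacuous.
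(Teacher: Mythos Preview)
Your proof is correct and follows essentially the same approach as the paper: rewrite the invariance $d\mu_{f_k}=(A_k\circ f_k)^\ast(A_k)_\ast\mu_{f_k}$ and pass to the limit via Proposition~\ref{p:pullback-continuity}. You provide more detail than the paper does---the contravariance identity, the fact that $(A^{-1})^\ast=A_\ast$, and the observation that $\ex_\varphi=\emptyset$ for $\varphi\in\Rat_d^\ast$---but the argument is the same.
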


\begin{proof}
  For all $k \ge 1$, 
  $$(A_k \circ f_k)^* (A_k) _* \mu_k = d \cdot \mu_{f_k}.$$
Continuity of the pull-back at
  $(\varphi,\nu)$ (Proposition \ref{p:pullback-continuity}) yields  $\varphi^* \nu = d \cdot \mu .$
\end{proof}

\section{Limiting Dynamics of Fully Ramified Sequences}
\label{s:fullyramified}
Recall from \S \ref{s:post} that $\Rat^*_d$ denotes the space of degree
$d$ rational maps with non-constant reduction. 
M\"obius transformations act by post-composition on $\Rat^*_d$.
Its orbit space, denoted by $\llbracket \Rat_d^* ]$, is a compact Hausdorff space, see Corollary \ref{cor:cmpt}.

Here we consider sequences
$\{f_k\} \subset \Ratd$ 
such that $\{\llbracket f^{\circ n}_k]\}$
converges to $\llbracket \varphi_n] \in \llbracket \Rat_{d^n}^* ]$, for all $n$.
From Lemma~\ref{l:decomposition},  there exists
$\varphi_{n,n+1} \in \Rat_d^*$ such that $\varphi_{n+1} = \varphi_{n,n+1} \circ \varphi_n$, for all $n$. We say that $n_0$ is a \emph{fully ramified time} if
$\varphi_{n_0,n_0+1}$ is a non-degenerate map of degree $d$.



In this section, we analyze the limiting dynamics for sequences having several successive fully ramified times. Our analysis is based on the trees of bouquet of spheres discussed in Appendix~\ref{s:appendix} and closely related to the work by Arfeux~\cite{ArfeuxCompactification}.
We establish the Structure Theorem \ref{p:structure} for sequences $\{f_k\}$ diverging in moduli space $\ratd$ and possessing at least $5$ successive fully ramified times. We also deduce Corollaries \ref{coro:main}  and \ref{coro:main1} which are crucial in our proofs of Theorems \ref{A} and \ref{B}. Moreover, we obtain Theorem \ref{C} as a byproduct of the Structure Theorem \ref{p:structure}.

\subsection{Statement of the Structure Theorem}

Let $\{f_k\} \subset \Ratd$ be such that  $\llbracket f_k^{\circ n} ]$ converges to $\llbracket \varphi_n ]$ in $\llbracket \Rat_{d^n}^*]$ for each $n\ge 1$.
Recall from \S \ref{s:post} that
$\{ A_{n,k}: n, k \ge 1 \} \subset \Rat_1$ 
is a collection of post-scalings for $\{f_k\}$} if for any $n \ge 1$, as $k \to \infty$, 
$$A_{n,k} \circ f^{\circ n}_k \to \varphi_n.$$
 Such a collection  $\{A_{n,k}\}$ has \emph{convergent changes of coordinates} if  for all $n \neq n'$, 
  $$A_{n',k} \circ A^{-1}_{n,k} \to A_{(n,n')} \in \oRat_1.$$
  When clear from context, $A_{(n,n')} \in \oRat_1$ will be the limit of the corresponding coordinate changes $A_{n',k} \circ A^{-1}_{n,k}$ and, if $A_{(n,n')} \in \partial\Rat_1$, write $$a_{n,n'}:=  \tilde{A}_{(n,n')}\in\oC.$$
 In the case that $A_{(n,n')} \in \partial \Rat_1$ for all $n \neq n'$, we say
 that $\{A_{n,k}: n,k \ge 1\}$ is a collection of \emph{independent post-scalings}. Note that $a_{n',n}$ is the unique hole of $A_{(n,n')}$.
By definition, independent collections of scalings have convergent changes of coordinates.


  Let $m \ge 1$ and consider a collection $\{A_{n,k}: n \ge m \}$ 
  of independent scalings, where we abuse of notation and omit
  writing $k \ge 1$ and, sometimes, simply write $\{A_{n,k}\}$. 
  In view of Appendix \ref{s:appendix}, given $\ell \ge m$, 
the collection $\{A_{n,k}: m \le n \le \ell\}$  is, in a certain sense, a tree of bouquets of spheres. More precisely,
  for each integer $n \ge 1$,
  consider a copy $\oC_n$ of the Riemann sphere.
  Let $\sim$ be the equivalence relation in $\oC_m \sqcup \cdots \sqcup \oC_\ell$ that identifies $a_{n,n'} \in \oC_{n'}$ with $a_{n',n} \in \oC_n$   if and only
  if $$a_{n,n''} = a_{n',n''}$$
  for all $n'' \neq n, n'$ such that $m \le n''\le \ell$.
  All other $\sim$-classes are trivial.
  Let
  $$\cS_{\ell}:= \oC_m \sqcup \cdots \sqcup \oC_\ell/ \sim.$$
  By Proposition \ref{p:bouquet}, the space
  $\cS_{\ell}$ is simply connected. For each $n$, we have the
  retraction
  $$\rho_{\ell,n} : \cS_{\ell} \to \oC_n$$
  which maps  $\oC_{n'}$ onto $a_{n',n}$, for all $n'\neq n$. 

  Collapsing $\oC_{\ell+1} \subset \cS_{\ell+1}$ to  a point 
  produces a continuous  map $\pi_{\ell+1}: \cS_{\ell+1} \to \cS_{\ell}$.
  We say that the inverse limit of $(\cS_{\ell}, \pi_\ell)$ 
  is the \emph{space $\cS$ associated  to the collection of scalings $\{A_{n,k} : n \ge m\}$}. 
  
  It is convenient to identify $\oC_n$ with
  the corresponding subset of $\cS_{\ell}$.
  In view of Lemma~\ref{l:decomposition},  $A_{n+1,k} \circ f_k \circ A_{n,k}^{-1}$ converges to a map $\varphi_{n,n+1} \in \Rat^*_d$.
  It is also convenient to regard
  $\tilde\varphi_{n,n+1}$
  as a map from $\oC_n$ onto $\oC_{n+1}$ and $A_{n,k}$ as a map from $\oC$ onto $\oC_n$. We will show that, under certain conditions, the maps $\tilde{\varphi}_{n,n+1}:\oC_n \to \oC_{n+1}$ produce
  a well  defined map from $\cS$  to $\cS$.

  

    \begin{theorem}[Structure Theorem]
    \label{p:structure}
    Let $\{f_k\}\subset\Ratd$ be a sequence such that $\llbracket f_k^{\circ n} ]\to\llbracket\varphi_n ]\in\llbracket \Rat_{d^n}^*]$ 
    for each $n\ge 1$ 
    and $[f_k]\to[g]\in\partial \rat_d$, as $k\to\infty$.    Let
       $\{A_{n,k}\}$ be a collection of post-scalings for $\{f_k\}$ having convergent changes of coordinates.

    Assume that there exists $m \ge 1$ such that $m, m+1,\dots,m+4$
    are fully ramified times. 
    Then $\{A_{n,k} :n \ge m\}$ is a collection of independent post-scalings and
    the  following  statements hold:
       \begin{enumerate}
       \item    The space $\cS$ associated to $\{A_{n,k}\}$
         is the linear concatenation of
         $\oC_{m}, \oC_{m+1}, \cdots$.
         That is, in suitable coordinates,
         employed for the rest of the statement,
         $\cS_{\ell}$ is
         obtained by identifying $0 \in \oC_n$ to $\infty \in \oC_{n+1}$,
         for all $m \le n < \ell$ (see Figure~\ref{fig:structure}). 
       \item  The map
         $$\begin{array}{cccl}
           \cF:&\cS&\to&\cS\\
               &z&\mapsto& \tilde\varphi_{n,n+1}(z) \in \oC_{n+1} \text{ if } z \in \oC_n,
          \end{array}
          $$ is well defined and continuous.
        \item For all $n > m$,
          $$\deg_{\infty} \tvarphi_{n,n+1}=\deg \tilde\varphi_{n,n+1}=\deg_0 \tvarphi_{n-1,n}.$$
       \item If {$n>m$ and} $n+1$  is
         a fully ramified  time, then $\tvarphi_{n,n+1}$ is a monomial of degree $d$.
      \item If  $n>m$ is not a fully
        ramified time, then  $z=\infty$ is the unique hole of $\varphi_{n,n+1}$.
       \end{enumerate}
  \end{theorem}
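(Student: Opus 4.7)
The plan is to bootstrap from the five successive fully ramified times $m, \ldots, m+4$ to rigidify the limiting picture. The three main inputs are Lemma~\ref{l:decomposition} (identifying $\varphi_{n,n+1}$ as a conjugate limit), Lemma~\ref{lem:pre-number} (preimage counting near holes), and the tree-of-bouquets-of-spheres machinery of Appendix~\ref{s:appendix}. The divergence $[f_k] \to [g] \in \partial\ratd$ is the lever that converts fully ramified information about $\varphi_{n,n+1}$ into degeneracy of the scalings' changes of coordinates.

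First I would prove independence. For an adjacent pair with $n$ fully ramified (so $\varphi_{n,n+1}$ is non-degenerate of degree $d$), if $A_{n+1,k}\circ A_{n,k}^{-1}$ converged to a non-degenerate $M \in \Rat_1$, then
$$A_{n,k} \circ f_k \circ A_{n,k}^{-1} = (A_{n+1,k} \circ A_{n,k}^{-1})^{-1} \circ (A_{n+1,k} \circ f_k \circ A_{n,k}^{-1}) \to M^{-1} \circ \varphi_{n,n+1}$$
would be a conjugate limit of $f_k$ landing inside $\Ratd$, contradicting $[f_k] \to [g] \in \partial\ratd$. Hence $A_{(n,n+1)} \in \partial \Rat_1$ whenever $n$ is fully ramified. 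The same argument applied to the degree-$d^j$ non-degenerate limit of $A_{n+j,k} \circ f_k^{\circ j} \circ A_{n,k}^{-1}$ yields independence of $A_{n,k}$ and $A_{n+j,k}$ across the window $m, \ldots, m+5$, and then inductively beyond it. With independence in hand, Appendix~\ref{s:appendix} produces the tree of bouquets $\cS_\ell$. Normalizing coordinates on each $\oC_n$ so that $a_{n-1,n}=\infty$ and $a_{n+1,n}=0$, the linear-chain claim in Statement~(1) reduces to showing $a_{n',n} \in \{0,\infty\}$ for all $n'\neq n$; I would establish this by induction on $|n-n'|$, using that $\tilde\varphi_{n,n+1}$ must respect the inverse-limit topology on $\cS$ and cannot introduce new glue directions.

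For Statements~(3)--(5), I would combine the chain structure with local degree bookkeeping. Statement~(3): since $0 \in \oC_{n-1}$ is identified with $\infty \in \oC_n$, the local degree of $\cF$ at this single point of $\cS$ equals both $\deg_0 \tilde\varphi_{n-1,n}$ (from the $\oC_{n-1}$ side) and $\deg_\infty \tilde\varphi_{n,n+1}$ (from the $\oC_n$ side). The further equality $\deg_\infty \tilde\varphi_{n,n+1} = \deg \tilde\varphi_{n,n+1}$ asserts that $\infty$ is the unique preimage of $\infty$ under $\tilde\varphi_{n,n+1}$, which I would obtain from Lemma~\ref{lem:pre-number} applied to a non-glue test point: any other preimage would manifest as a glue direction incompatible with the chain. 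Statement~(4) then follows: if $n+1$ is fully ramified, Statement~(3) at $n+1$ yields $\deg_0 \tilde\varphi_{n,n+1}=d$, while Statement~(3) at $n$ yields $\deg_\infty \tilde\varphi_{n,n+1}=\deg \tilde\varphi_{n,n+1}$; since $\deg_0 \tilde\varphi_{n,n+1}\le \deg\tilde\varphi_{n,n+1}\le d$, we are forced into total ramification at both $0$ and $\infty$ with full degree $d$, so $\tilde\varphi_{n,n+1}(z)=cz^d$, and a final rescaling on $\oC_{n+1}$ preserving $\{0,\infty\}$ normalizes $c=1$. Statement~(5) likewise follows from Statement~(3): the deficit $d-\deg\tilde\varphi_{n,n+1}$ concentrates entirely at $\infty$, as a hole elsewhere would register via Lemma~\ref{lem:pre-number} as an extra preimage incompatible with the chain.

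Continuity of $\cF$ (Statement~(2)) is then a bookkeeping verification at the glue points: $\tilde\varphi_{n,n+1}(0) = 0 \in \oC_{n+1}$, identified with $\infty \in \oC_{n+2}$, agrees with $\tilde\varphi_{n+1,n+2}(\infty) = \infty \in \oC_{n+2}$; continuity on each $\oC_n$ is automatic, and compatibility with the inverse-limit topology on $\cS$ is immediate. The main obstacle I anticipate is the linear-chain claim of Statement~(1): independence alone produces only a tree, and rigidifying it to a chain genuinely uses the hypothesis of \emph{five} consecutive fully ramified times. I expect the number $5$ to emerge as the minimum needed to exclude nonlinear branching configurations in $\cS_\ell$, via a careful iteration of Lemma~\ref{lem:pre-number}; Theorem~\ref{C} should then fall out as a direct corollary, with the monomial structure of $\tilde\varphi_{n,n+1}$ (Statement~(4)) translating into the polynomial-like decomposition $(U_k, V_k, f_k)$.
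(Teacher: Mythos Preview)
Your scaffold is right --- independence first, then the chain structure, then the local-degree statements --- and your independence argument on the fully ramified window matches the paper's Lemma~\ref{lem:non}. But the heart of the theorem is Statement~(1), and there you have not identified the mechanism. Two separate ideas are needed, and neither appears in your proposal.

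First, why is the tree an interval at all? Independence only gives you a simply connected bouquet $\cS_\ell$. The paper's argument (Lemma~\ref{l:order}) is a \emph{critical point count}: by Proposition~\ref{p:critical}, the total number of critical points of $\cF_{n_0}:\cS_{n_0}\to\cS'$ is $2d-2$, and each sphere $\oC_n$ contributes at least $d-1$ critical points to each direction branching off from it. Three branches at a vertex would force $3(d-1)>2d-2$ critical points, so every vertex has valence at most two. Your ``cannot introduce new glue directions'' is not an argument; it is the conclusion you need.

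Second, even once $\cS_{n_0}$ is a chain, why is it the chain $\oC_m,\oC_{m+1},\ldots$ in that order? Translation by $1$ is consecutive-preserving and hence monotone, but it could be order-reversing, producing an alternating pattern such as $\oC_m,\oC_{m+2},\oC_{m+4},\ldots,\oC_{m+3},\oC_{m+1}$. The paper rules this out with a \emph{dynamical} argument (Proposition~\ref{p:julia} and Lemma~\ref{l:increasing}): it extracts a degree $d^2$ polynomial-like restriction of $f_k^{\circ 2}$, locates the Julia set $\cJ(f_k)$ near specific glue points, and derives a contradiction from the two possible alternating orders. This is exactly where the number $5$ enters --- you need enough spheres to separate the Julia-set locations in the two bad configurations. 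Nothing in your sketch produces this; Lemma~\ref{lem:pre-number} alone will not distinguish the standard order from the alternating ones.

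There is also a gap in your argument for Statement~(3). The glue point $0\in\oC_{n-1}\sim\infty\in\oC_n$ is mapped by $\tilde\varphi_{n-1,n}$ into $\oC_n$ and by $\tilde\varphi_{n,n+1}$ into $\oC_{n+1}$; these are two different maps landing in different spheres, so there is no ``local degree of $\cF$ at this single point'' that automatically equals both. The paper obtains $\deg_{a_{\ell,\ell-1}}\tilde\varphi_{\ell-1,\ell}=\deg\tilde\varphi_{\ell,\ell+1}$ by counting $F_k$-preimages of a generic point of $\oC_{\ell+1}$ that land near $\oC_\ell$ (Lemma~\ref{l:degree}, via Proposition~\ref{p:not-fully}). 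Finally, your divergence argument for independence breaks down once $n$ is not fully ramified (the composite limit is then degenerate); the paper handles the inductive step (Lemma~\ref{l:indep}) again by preimage counting against the already-established structure, not by appealing to $[f_k]\to\partial\ratd$.
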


  \begin{figure}[h]
    \centering
    \centerline{\includegraphics[height=4cm]{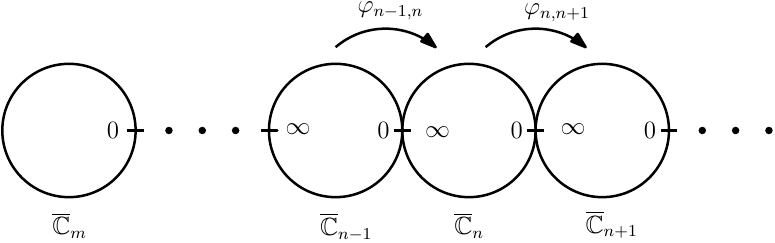}}
    \caption{Structure Theorem}
    \label{fig:structure}
  \end{figure}
  
  Continuity of $\cF$ is equivalent, once (1) is established, to $\tvarphi_{n,n+1}(\infty)=\infty$ and $\tvarphi_{n-1,n}(0)=0$ for $n>m$.
  Statement (3) yields that $\deg \tvarphi_{n,n+1}$ is a non-increasing function
  of $n>m$. Moreover, from   {statements (1)-(3)},  $\tvarphi_{n,n+1}$ is a polynomial for all $n>m$.

  The map $\cF: \cS \to \cS$ should be regarded as a limiting action
  of the sequence $\{f_k\}$.
  In general, let us assume that $\{A_{n,k}: n \ge m\}$ is a collection
  of independent post-scaling for a sequence $\{f_k\}$.
   For any $\ell \ge  m$, let 
   $$J_\ell:=\{n :  m  \le n  \le  \ell \}$$
    and consider the embedding
  $$\begin{array}{cccl}
           \cA_{J_\ell,k}:&\oC&\hookrightarrow&\oC^{J_\ell}\\
               &z&\mapsto& (A_{n,k}(z))_{n\in J_\ell}.
    \end{array}
  $$
  From Appendix~\ref{s:appendix}, 
 the space $\cS_\ell$ is naturally
  identified  with the  Hausdorff  limit of the embedded copies $\cA_{J_\ell,k}(\oC)$
  of $\oC$.
  Similarly, for $\ell+1$, the space $\cS_{\ell+1}$ is the limit
  of $\cA_{J_{\ell+1},k}(\oC)$. 
  Then, $f_k:\oC \to \oC$ acts on the embedded sphere as 
   $$
  \begin{array}{cccc}
    F_k :& \cA_{J_\ell,k}(\oC)& \to &\cA_{J_{\ell+1},k}(\oC)\\
         &    w & \mapsto & \cA_{J_{\ell+1},k} \circ f_k \circ \cA_{J_\ell,k}^{-1} (w).
  \end{array}
  $$
  Roughly speaking, Proposition~\ref{p:not-fully}
  allows us to count preimages under $F_k$. That is,
  consider  $z_0 \in \oC_n \subset \cS_{\ell}$ 
  and $w_0 \in \oC_{n+1} \subset \cS_{\ell+1}$.
Then, for sufficiently large $k$ and for any
$w \in \cA_{J_{\ell+1},k}(\oC)$ close to $\rho^{-1}_{\ell+1,n+1}(w_0)$,
there exists $\delta$ preimages of $w$, under $F_k$, close to
$\rho^{-1}_{\ell,n}(z_0)$ where 
$$\delta =
\begin{cases}
  d_{z_0} (\varphi_{n,n+1}), & \text{ if } \tilde{\varphi}_{n,n+1}(z_0)\neq w_0,\\
  d_{z_0} (\varphi_{n,n+1})+ \deg_{z_0} \tilde{\varphi}_{n,n+1}(z_0), & \text{ if } \tilde{\varphi}_{n,n+1}(z_0)= w_0.
\end{cases}
$$

See Proposition~\ref{p:not-fully} for a precise statement.

\begin{remark}
  In Berkovich space language, consider a type II point $x:=x_n$ that
  maps 
  onto  $y:=x_{n+1}$
  by a rational function $F$, and let $D_x(z_0), D_y(w_0)$ be directions
  at $x$ and $y$, respectively. Then the number of preimages in $D_x(z_0)$
  of a given point $w \in D_y(w_0)$ is prescribed by a well known formula
  that depends on the \emph{surplus multiplicity} and \emph{multiplicity} of the
  direction $D_x(z_0)$ (see \cite[Section 3]{Faber13}). The formula above is its Archimedean analogue.
   In this analogy, $x_n, x_{n+1}$
  correspond to $\oC_n, \oC_{n+1}$. 
  \end{remark}

  \subsection{Proof of the Structure Theorem}
  \label{s:proof-structure}

  The proof will be by induction on $\ell \ge m+4$.
In \S \ref{s:ih}, we reformulate
the assertions of the theorem in a manner suitable for induction.
In \S \ref{s:fr},  we
establish the inductive hypothesis for the initial case $\ell=m+4$ or more generally, for any $\ell =n_0 \ge m+4$ such that $n$ is a fully ramified time if
$m\le n\le n_0$.
Also, Theorem~\ref{C} is obtained as a byproduct.
In \S \ref{s:notfr}, the inductive step for $\ell \ge m+4$ is proven. 
We rely on  a series of lemmas, stated and established, under  the assumptions and notation of Theorem \ref{p:structure}.

\subsubsection{Inductive Hypothesis}
\label{s:ih}
Given $\ell \ge m+4$, consider the following assertions:
\begin{enumerate}
  \label{h:inductive}
       \item[(0.$\ell$)] For all $m \le n , n' \le \ell$, if $n \neq n'$, then
  $A_{(n,n')} \in \partial \Rat_1$. 
       \item[(1.$\ell$)]  $\cS_{\ell}$ is
         obtained by identifying $a_{n+1,n} \in \oC_n$ with $a_{n,n+1} \in \oC_{n+1}$ 
         for  $m \le n < \ell$. Moreover, $a_{n-1,n} \neq a_{n+1,n}$ for $m < n < \ell$.
       \item[(2.$\ell$)]
       $ \tilde\varphi_{n,n+1}(a_{n-1,n}) =  a_{n,n+1}$ if $m < n < \ell$, and   $\tilde\varphi_{n,n+1}(a_{n+1,n})  =  a_{n+2,n+1}$  if  $m \le n < \ell-1$.
       \item[(3.$\ell$)]
         $ \deg \tilde\varphi_{n,n+1}=\deg_{a_{n,n-1}} \tilde\varphi_{n-1,n}$ for  $m<n < \ell$.
       \item[(4.$\ell$)] If $m<n<\ell$ and $n+1$ is a fully ramified time, 
         then the local degree
         of $\varphi_{n,n+1}$ at  $a_{n\pm 1,n}$ is $d$.
        \item[(5.$\ell$)]  For  $m < n <\ell$, 
          $$d= d_{a_{n-1,n}} (\varphi_{n,n+1})+ \deg_{a_{n-1,n}}
          \tilde{\varphi}_{n,n+1}.$$
\end{enumerate}

Once the induction is implemented, Theorem~\ref{p:structure} is an immediate consequence.

\begin{proof}[Proof of Theorem~\ref{p:structure} assuming (0.$\ell$)--(5.$\ell$) for all $\ell >m$]
Clearly  (0.$\ell$) for all $\ell$ yields the independence of $\{A_{n,k}:n\ge m\}$. Moreover, (1.$\ell$) implies statement (1) of Theorem~\ref{p:structure}.
For statement (2),
we only need to check continuity at the intersection of consecutive
spheres, which is the content of (2.$\ell$). Provided we normalize $a_{n,n-1}=0 \in \oC_{n-1}$ and $a_{n-1,n} = \infty \in \oC_n$, for all $n$,
statement (3.$\ell$) for all $\ell$ is
the second equality in (3).
Statement  (4) is exactly (4.$\ell$) for all $\ell$.
For statement (5) and the remaining part of statement (3),
note that $d-\deg\tvarphi_{n,n+1}$ is the total number of holes
of $\varphi_{n,n+1}$, counted with depths. Therefore, 
$$d_{a_{n-1,n}} (\varphi_{n,n+1}) \le  d-\deg\tvarphi_{n,n+1}\le d- \deg_{a_{n-1,n}}
          \tilde{\varphi}_{n,n+1}.$$
          Thus, (5.$\ell$) for all $\ell$ yields equality
          between all these quantities.
          Equivalently,  ${a_{n-1,n}}$ is the unique hole of $\varphi_{n,n+1}$ and
          the local degree of $\tilde{\varphi}_{n,n+1}$
          at this point is $\deg\tvarphi_{n,n+1}$.  Statements
          (3) and (5) now follow.
     \end{proof}


\subsubsection{Proof of the Structure Theorem for fully ramified times}
\label{s:fr}
Choose $n_0\ge m+4$ and assume that $n$ is a fully ramified
time for all $n$ such that $m+4 \le n \le n_0$.
Our aim here is to prove (0.$\ell$) to (5.$\ell$) for $\ell=n_0$.

We start proving (0.$n_0$). In fact, it will be more convenient
to prove (0.$n_0+1$):
\begin{lemma}
  \label{lem:non}
If $m\le n < n' \le n_0+1$, then $A_{(n',n)} \in \partial \Rat_1$.
\end{lemma}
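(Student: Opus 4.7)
The plan is to prove the lemma in two stages: first the consecutive case $n'=n+1$; then the general case by induction on the gap $n'-n$, handling a potential indeterminate sub-case via dynamical identities coming from the fully ramified limits $\tilde\varphi_{j,j+1}$.

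For the consecutive case, I would suppose for contradiction that $A_{(n+1,n)}\in\Rat_1$ is non-degenerate for some $m\le n\le n_0$. Since $n$ is a fully ramified time, $\varphi_{n,n+1}\in\Rat_d$ is non-degenerate of degree $d$. The identity
\[A_{n,k}\circ f_k\circ A_{n,k}^{-1}=\bigl(A_{n,k}\circ A_{n+1,k}^{-1}\bigr)\circ\bigl(A_{n+1,k}\circ f_k\circ A_{n,k}^{-1}\bigr)\]
exhibits the conjugate sequence as a product of two factors whose limits, $A_{(n+1,n)}$ and $\varphi_{n,n+1}$, are both non-degenerate. Since the right factor has no holes, the composition avoids the indeterminacy locus of \S\ref{s:composition}, and the left-hand side converges in $\Rat_d$ to the non-degenerate map $A_{(n+1,n)}\circ\varphi_{n,n+1}$. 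Passing to moduli, $[f_k]=[A_{n,k}\circ f_k\circ A_{n,k}^{-1}]\to[A_{(n+1,n)}\circ\varphi_{n,n+1}]\in\ratd$, contradicting $[f_k]\to[g]\in\partial\ratd$.

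For the non-consecutive case, I would induct on $n'-n\ge 2$. Suppose for contradiction that $A_{(n',n)}\in\Rat_1$. For each intermediate $i$ with $n<i<n'$, the factorization
\[A_{n,k}\circ A_{n',k}^{-1}=(A_{n,k}\circ A_{i,k}^{-1})\circ(A_{i,k}\circ A_{n',k}^{-1})\]
has two right-hand factors that converge, by the inductive hypothesis, to the degenerate M\"obius maps $A_{(i,n)}$ and $A_{(n',i)}$. By the composition formula of \S\ref{s:composition}, whenever the pair $(A_{(i,n)},A_{(n',i)})$ avoids the indeterminacy locus $I(1,1)$---equivalently, whenever $a_{n',i}\neq a_{n,i}$---the limit of the composition is the formal composition, which is itself a degenerate M\"obius (its reduction is the constant $a_{i,n}$). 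This gives $A_{(n',n)}\in\partial\Rat_1$, a contradiction. The only surviving possibility is that $a_{n,i}=a_{n',i}$ for every intermediate $i$.

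Ruling out this indeterminate sub-case is the main obstacle. The strategy is to compute the limit of $A_{j,k}\circ f_k\circ A_{j,k}^{-1}$ in two ways (factoring through $A_{j-1,k}$ and through $A_{j+1,k}$) and compare, obtaining for each $j$ in the fully ramified range the structural identities
\[\tilde\varphi_{j-1,j}(a_{j,j-1})=a_{j+1,j}\qquad\text{and}\qquad \tilde\varphi_{j,j+1}\text{ is totally ramified at }a_{j-1,j}\text{ with image }a_{j,j+1}.\]
In the indeterminate sub-case, applying these identities at $j=i$ and $j=i+1$ forces both $a_{i-1,i}$ (with local degree $d$) and $a_{i+1,i}$ to be preimages of $a_{i,i+1}$ under the degree-$d$ map $\tilde\varphi_{i,i+1}$, hence $a_{i+1,i}=a_{i-1,i}$. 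Propagating this equality through the five consecutive fully ramified times $m,\dots,m+4$ is expected to collapse the dynamics and violate the non-degeneracy of some $\tilde\varphi_{j,j+1}$; carrying out this combinatorial/dynamical contradiction cleanly is the hard part, and the number $5$ in the Structure Theorem's hypothesis is precisely what guarantees enough fully ramified maps for the argument to close.
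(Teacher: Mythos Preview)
Your consecutive case ($n'=n+1$) is correct and is exactly the paper's argument. The trouble is that you abandon this idea for $n'-n\ge 2$ and switch to an induction on the gap, which is both unnecessary and, as you yourself flag, incomplete.

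The paper's proof treats all gaps at once by the same move you used for $n'-n=1$. Under the hypotheses of \S\ref{s:fr}, every time $j$ with $m\le j\le n_0$ is fully ramified, so each $\varphi_{j,j+1}\in\Rat_d$ is non-degenerate. Hence
\[
A_{n',k}\circ f_k^{\circ(n'-n)}\circ A_{n,k}^{-1}
=\bigl(A_{n',k}\circ f_k\circ A_{n'-1,k}^{-1}\bigr)\circ\cdots\circ\bigl(A_{n+1,k}\circ f_k\circ A_{n,k}^{-1}\bigr)
\]
converges uniformly to the non-degenerate map $\varphi_{n'-1,n'}\circ\cdots\circ\varphi_{n,n+1}\in\Rat_{d^{n'-n}}$. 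If $A_{(n',n)}\in\Rat_1$, then
\[
A_{n,k}\circ f_k^{\circ(n'-n)}\circ A_{n,k}^{-1}
=\bigl(A_{n,k}\circ A_{n',k}^{-1}\bigr)\circ\bigl(A_{n',k}\circ f_k^{\circ(n'-n)}\circ A_{n,k}^{-1}\bigr)
\]
converges in $\Rat_{d^{n'-n}}$, so $[f_k^{\circ(n'-n)}]$ converges in $\rat_{d^{n'-n}}$. Properness of the $(n'-n)$-th iterate map then contradicts $[f_k]\to[g]\in\partial\ratd$. That is the whole proof.

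Beyond being roundabout, your inductive route has a genuine gap. In the ``indeterminate sub-case'' you invoke identities such as $\tilde\varphi_{j-1,j}(a_{j,j-1})=a_{j+1,j}$ and total ramification of $\tilde\varphi_{j,j+1}$ at $a_{j-1,j}$. These are precisely statements (2.$\ell$) and (4.$\ell$) of the Structure Theorem's inductive scheme, and they are proved \emph{after} (0.$\ell$), which is the present lemma. Using them here is circular. Moreover, your final paragraph is not an argument but a hope (``is expected to collapse the dynamics\ldots carrying out this\ldots is the hard part''); the five fully ramified times are needed elsewhere in the Structure Theorem (e.g.\ Lemma~\ref{l:increasing}), not here.
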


\begin{proof}
By contradiction, suppose that $A_{(n',n)}$ is non-degenerate. Then 
$$[f^{\circ n'-n}_k]=[ (A_{n,k} \circ A_{n',k}^{-1}) \circ (A_{n',k} \circ f^{\circ n'-n}_k \circ A_{n,k})] \to [A_{(n',n)} \circ \varphi_{n'-1,n'} \circ \cdots \circ \varphi_{n,n+1}] \in \rat_{d^{n'-n}},$$
  which contradicts that $[f_k] \to [g] \in \partial \rat_d$ since
  the $(n'-n)$-th iterate map is proper (see \cite[Proposition 4.1]{DeMarco07}).  
\end{proof}


Let $\cS'$ be the space associated to $\{A_{n,k}: m+1\le n\le n_0+1\}$.
According to Proposition~\ref{p:critical},
  $$\cF_{n_0}:\cS_{n_0} \to \cS',$$
  given by $$\cF_{n_0}(z) :=\varphi_{n,n+1}(z) \in \oC_{n+1},$$
for $z \in \oC_n$ and $n \in J_{n_0}$, 
is continuous. 
Moreover, there exists a finite set $\crit(\cF_{n_0}) \subset \cS_{n_0}$
  of \emph{critical points}, each with an assigned multiplicity, such
  that the total number of critical points is $2d-2$, counted with  multiplicities.
  Furthermore, $\crit(\cF_{n_0})$ has the following property: if $n \in J_{n_0}$ and
  $z \in \oC_n$,
  then the multiplicity of $z$ as a critical point of $\tilde\varphi_{n,n+1}$ is the number of elements of $\crit(\cF_{n_0})$ in $\rho_{{n_0},n}^{-1} (z)$, counted with multiplicities.

 

\begin{lemma}
  \label{l:order}
  There exists a total order $\prec$ in $J_{n_0}$
  with least element $m$ such that 
$$  \begin{array}{ccc}
    J_{n_0-1} &\to & J_{n_0} \\
    n &\mapsto & n+1
    \end{array}
    $$
    is a monotone map. Moreover, given  $n \neq n' \in J_{n_0}$, 
for  $\oC_{n}, \oC_{n'} \subset \cS_{n_0}$, the following hold: 
  \begin{enumerate}
  \item
     $\oC_n \cap \oC_{n'} \neq \emptyset$
if and only if  $n$ and $n'$ are $\prec$-consecutive. 
\item If $x \in \oC_{n}\cap \oC_{n'}$, then $\deg_x \varphi_{n,n+1} =d$.
\item If $n,n'$ are $\prec$-consecutive in $J_{n_0-1}$, then $n+1,n'+1$ are $\prec$-consecutive
  in $J_{n_0} \setminus \{m\}$. 
  \end{enumerate}
\end{lemma}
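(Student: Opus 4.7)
The plan is to analyze the dual structure of the tree of spheres $\cS_{n_0}$, combining Riemann-Hurwitz on each sphere with shift-invariance coming from the map $\cF_{n_0}$. By Lemma~\ref{lem:non}, $\{A_{n,k}: m\le n\le n_0\}$ is an independent collection of scalings, so $\cS_{n_0}$ is the tree of bouquets of spheres from Appendix~\ref{s:appendix}; let $G$ be its dual graph (vertices $J_{n_0}$, edges between indices of intersecting spheres).

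The first main step is to show that $G$ is a path. Fix $n\in J_{n_0-1}$ and let $y_1,\dots,y_\ell$ be the distinct attachment points in $\oC_n$ where other spheres meet, with $T_j = \rho_{n_0,n}^{-1}(y_j)$ denoting the corresponding branch of $\cS_{n_0}$; let $b_j$ be the total $\cF_{n_0}$-critical multiplicity in $T_j$. By Proposition~\ref{p:critical}, $b_j$ equals the critical multiplicity of $\tilde\varphi_{n,n+1}$ at $y_j$, so $b_j\le d-1$ since the local degree is at most $d$. For the opposite inequality, each sphere attached at $y_j$ determines a sub-tree inside $T_j$; within each such sub-tree one finds a ``deepest'' sphere $\oC_{n^*}$ having a single attachment point, and applying Proposition~\ref{p:critical} together with Riemann-Hurwitz to $\oC_{n^*}$ yields that $\oC_{n^*}$ carries at least $d-1$ units of interior critical mass, all of which lies in $T_j$. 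Hence each sphere attached at $y_j$ contributes at least $d-1$ to $b_j$. Combined with $b_j\le d-1$ and $d\ge 2$, at most one sphere attaches at $y_j$—so $\cS_{n_0}$ has no bouquets of three or more spheres—and $b_j = d-1$. Riemann-Hurwitz on $\oC_n$ then gives $\ell(d-1) = \sum_j b_j = 2d-2$, so $\ell\le 2$. Thus $G$ is a simple tree with all degrees $\le 2$, i.e., a path. The equality $b_j = d-1$ also identifies $\deg_{y_j}\tilde\varphi_{n,n+1} = d$, proving claim (2).

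To orient the path and conclude, I use the shift $\sigma\colon n\mapsto n+1$. If $\oC_n\cap\oC_{n'} = \{x\}$, continuity of $\cF_{n_0}$ at the common point gives $\cF_{n_0}(x) \in \oC_{n+1}\cap \oC_{n'+1}$, so $\sigma$ preserves adjacency. A path admits exactly two Hamiltonian orderings; I claim that $m$ must be an endpoint of the path, and that the ordering with $m$ least is the natural one $m\prec m+1\prec\cdots\prec n_0$. If $m$ were interior, $m$ would have two neighbors $a,b\in J_{n_0}$ with (say) $a\le n_0-1$; iterating the shift along $(m,a)\mapsto (m+1,a+1)\mapsto\cdots$ forces a chain of adjacent pairs in $G$ and, combined with analogous reasoning for $b$, propagates the interior-ness of vertices throughout $J_{n_0}$, contradicting the existence of leaves in a finite path. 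Hence $m$ is a leaf. The shift then acts on the path positions as a consecutive-preserving bijection $\{1,\dots,N-1\}\to\{2,\dots,N\}$ (with $N = n_0-m+1$), forcing $\sigma_{\mathrm{pos}}(i) = i+1$; the vertex at position $i$ is thus $m+i-1$, and properties (1)--(3) follow.

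The main obstacle is the Riemann-Hurwitz bound in the second paragraph, both the upper bound $b_j \le d-1$ (immediate) and the matching lower bound via deep-sphere descent (delicate), which is what simultaneously forces the path structure and rules out bouquets. The combinatorial orientation argument is conceptually direct but requires care to exclude the pathological configurations where $n_0$ neighbors $m$ in $G$.
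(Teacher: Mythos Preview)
Your argument for the path structure of the dual graph $G$ and for claim~(2) is correct and essentially the same as the paper's: both count critical mass in the branches and use that each branch carries at least $d-1$ critical points of $\cF_{n_0}$, forcing at most two branches per sphere and local degree $d$ at attachment points. (Your phrase ``Riemann--Hurwitz on $\oC_n$ gives $\ell(d-1)=\sum_j b_j=2d-2$'' is slightly imprecise---what you actually use is $\sum_j b_j\le 2d-2$ from the global critical count of $\cF_{n_0}$, combined with $b_j=d-1$.)

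The orientation argument, however, overreaches and contains a genuine gap. You claim to prove that the path order is the natural one $m\prec m+1\prec\cdots\prec n_0$; this is \emph{not} the content of the present lemma but of the subsequent Lemma~\ref{l:increasing}, which requires a genuinely dynamical argument (locating $\cJ(f_k)$ via the polynomial-like structure of Proposition~\ref{p:julia}). Your purely combinatorial reasoning cannot succeed here: the configurations of~\eqref{eq:1} and~\eqref{eq:2}, in which the shift $n\mapsto n+1$ is $\prec$-\emph{decreasing}, are fully consistent with every combinatorial constraint you invoke. The precise point where your argument breaks is the assertion that ``$\sigma$ preserves adjacency in $G$.'' The map $\cF_{n_0}$ goes from $\cS_{n_0}$ to the \emph{different} space $\cS'$ (indexed by $\{m+1,\dots,n_0+1\}$), so from $x\in\oC_n\cap\oC_{n'}$ in $\cS_{n_0}$ one only obtains $\cF_{n_0}(x)\in\oC_{n+1}\cap\oC_{n'+1}$ in $\cS'$, not in $\cS_{n_0}$. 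In case~\eqref{eq:2}, for instance, $\oC_m\cap\oC_{m+1}\neq\emptyset$ in $\cS_{n_0}$ while $\oC_{m+1}\cap\oC_{m+2}=\emptyset$ there; they become adjacent only after $\oC_m$ is collapsed. Your ``interior-ness propagation'' and your identification of the shift with $i\mapsto i+1$ on path positions both presuppose preservation of $G$-adjacency, which fails exactly in the decreasing cases you need to exclude.

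What the lemma actually requires is much weaker: take $\prec$ to be either path order on $G$, chosen so that $m\prec m+1$; then claim~(1) is automatic, and claim~(3) plus monotonicity follow because the shift is injective and carries $\prec$-consecutive pairs in $J_{n_0-1}$ to $\prec$-consecutive pairs in $J_{n_0}\setminus\{m\}$---the latter being precisely what adjacency in $\cS'$ encodes. No claim that $m$ is an endpoint, or that $\prec$ is the natural order, is needed (or provable by combinatorics alone) at this stage.
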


\begin{proof}

  To lighten notation, when possible,
  we omit sub-indices and write $\cS:=\cS_{n_0}$, $J=J_{n_0}$ and $\cF:=\cF_{n_0}$.
  Let us first show that given $n \in J$, there are
  at most two spheres, distinct from $\oC_n$, that intersect $\oC_n$.
  Suppose on the contrary that there are three distinct numbers $n_1, n_2, n_3 \neq n$ in $J$ such that
  the corresponding spheres have non-empty intersection with $\oC_n$ at
  $p_1, p_2, p_3$, respectively. Simple connectivity of $\cS$ yields
  that   $X_j:=\oC_{n_j} \setminus \{p_j\}$ 
  are pairwise disjoint for $j=1,2,3$, and hence 
  $\cT_j:=\rho_{n_0, n_j}^{-1} (X_j)$
  are also pairwise disjoint for $j=1,2,3$.
  Since $\varphi_{n_j,n_j+1}$ has degree $d$, its local degree at $p_j$ is at most $d$. Thus $X_j$ contains at least $d-1$ critical points of $\varphi_{n_j,n_j+1}$,
  counted with multiplicities. Therefore, $\cT_j$ contains at least $d-1$
  points of $\crit(\cF)$, counted with multiplicity, by Proposition \ref{p:critical}.
  Then $\crit(\cF)$ has at least $3d-3 > 2d -2$ points, counted with multiplicities, which contradicts Proposition \ref{p:critical}. 
  A similar argument shows that three distinct spheres do not share a common point.

  To define the order in $J$. Declare $m \prec m +1$. 
  Consider the graph $\cT$ with vertex set $J$
  and edges given by all pairs $\{n,n'\}$ such that $\oC_n \cap \oC_{n'} \neq
  \emptyset$ in $\cS$.  Since there are no triple intersection points and
  $\cS$ is simply connected, by Proposition \ref{p:bouquet}, we conclude that $\cT$ is a connected tree. Moreover, the previous paragraph also implies that  each vertex of $\cT$ has valence at most $2$. Thus $\cT$
  is isomorphic to an interval graph: it has exactly two valence $1$ vertices,
  say $a$ and $b$.
  Without loss of generality, $m+1$  belongs to
  the subtree connecting $m$ with $b$.
  Declare  $n \prec n'$
  if and only if $n$ belongs to  the connected component of $\cT \setminus \{n'\}$ containing $a$; equivalently, generic elements of
  $\oC_{n}$ and $\oC_a$ lie in the same connected component
  of $\cS \setminus \oC_{n'}$.

   Suppose that $n \prec n'$ are two consecutive elements of $J_{n_0}$.
   Denote by $x \in \cS$ the intersection point of $\oC_n$ and $\oC_{n'}$.
We claim that the
local degree of $\varphi_{n,n+1}$ at $x$  is $d$.
Indeed,  
the critical multiplicity of $\varphi_{n,n+1}$
at $x$ is  the number of elements of $\crit(\cF)$, counted with multiplicity, in
$\rho_{n_0,n}^{-1}(x) \supset \rho_{n_0,n}^{-1}(\oC_{n'}\setminus \{x\})$, which is at least $d-1$ since
$\deg \varphi_{n',n'+1} = d$.
Therefore, $\deg_x \varphi_{n,n+1}=d$.
A similar argument shows that $\deg_x \varphi_{n',n'+1} =d$.

Finally, if $n,n' \in J_{n_0-1}$ are $\prec$-consecutive,
then  $n +1$ and $n'+1$ are consecutive in $J\setminus\{m\}$,
 since $\cF (x)$ lies in the intersection of the corresponding spheres.
Moreover, the translation map $n \mapsto n+1$ is injective and, being consecutive preserving, must also be monotone.
\end{proof}

We conclude that if the translation by $1$ is $\prec$-monotone increasing, then 
$$m \prec m+1 \prec \cdots \prec n_0-1 \prec n_0.$$
That is, the order $\prec$ coincides with the standard order of the integers. 
Otherwise, if the translation by $1$ is $\prec$- monotone decreasing,
then either 
\begin{equation}\label{eq:1}
m \prec m+2 \prec \cdots  \prec m+3 \prec m+1, 
\end{equation}
or 
\begin{equation}\label{eq:2}
  \cdots\prec m+2 \prec m \prec m+1 \prec m+3 \prec \cdots. 
\end{equation}
However, studying the location of the Julia set $\cJ(f_k)$ of $f_k$ for sufficiently large $k$, we will obtain an obstruction for translation to be decreasing.
The location of the Julia set is controlled by the following general result: 

\begin{proposition}
  \label{p:julia}
  Consider $\{h_k\} \subset \Ratd$. Assume that $\{B_{n,k}:n=0,1 \}$ are a
  pair of independent post-scalings such that the associated space is obtained
  from the identification of $ a \in \oC_0$ with  $b \in \oC_{1}$.
  Suppose
  that the following hold:
  \begin{enumerate}
  \item $B_{1,k} \circ h_k \circ B^{-1}_{0,k}$ converges uniformly to $ \varphi \in \Ratd$.
  \item  $\deg_a \varphi = d$ and $\varphi(a) \neq b$.
  \end{enumerate}
  Given neighborhoods $W$ of $z=a$ and $V$ of $z=b$,
  for all $k$ sufficiently large, the following statements hold:
  \begin{enumerate}
  \item[(i)]    $\cJ(h_k) \subset B_{1,k}^{-1}(V)$
  and $\oC \setminus B_{1,k}^{-1}(V)$ is contained in a completely invariant
  attracting Fatou component of $h_k$.
\item[(ii)]
  There exists  a Jordan neighborhood $D'_k \subset W$ of $z=a$ such that
  $(U_k',U_k,h_k)$ is a degree $d$ polynomial like map, where
  $U'_k:=B_{0,k}^{-1}(\oC_0 \setminus \overline{D'_k})$.
  \end{enumerate}
\end{proposition}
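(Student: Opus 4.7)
The plan is to establish (ii) first and then deduce (i). The key technical input, a consequence of the independence of the scalings under the identification $a\in\oC_0\sim b\in\oC_1$, is that $B_{1,k}\circ B_{0,k}^{-1}$ converges to a degenerate M\"obius with reduction $b$ and hole $a$; equivalently, it converges uniformly to $b$ on every compact subset of $\oC_0\setminus\{a\}$, and its inverse $B_{0,k}\circ B_{1,k}^{-1}$ converges uniformly to $a$ on every compact subset of $\oC_1\setminus\{b\}$.

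For (ii), the hypothesis $\deg_a\varphi=d=\deg\varphi$ gives $\varphi^{-1}(\varphi(a))=\{a\}$ with full multiplicity $d$. Shrinking $V$ so that $\varphi(a)\notin\overline V$, choose a Jordan neighborhood $V_0'$ of $\varphi(a)$ in $\oC_1$ disjoint from $\overline V$, and a Jordan neighborhood $D_0\subset W$ of $a$ in $\oC_0$ with $\varphi^{-1}(\overline{V_0'})=\overline{D_0}$ and $\varphi|_{D_0}\colon D_0\to V_0'$ proper of degree $d$. Uniform convergence of $\psi_k:=B_{1,k}\circ h_k\circ B_{0,k}^{-1}$ to $\varphi$, together with a Hurwitz-type argument (the critical multiplicity $d$ at $a$ accounts for the full preimage degree, ruling out spurious components), yields a Jordan neighborhood $D_k'\subset W$ of $a$ with $\psi_k^{-1}(\overline{V_0'})=\overline{D_k'}$ and $\psi_k|_{D_k'}\colon D_k'\to V_0'$ proper of degree $d$. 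Setting $U_k':=B_{0,k}^{-1}(\oC_0\setminus\overline{D_k'})$ and $U_k:=B_{1,k}^{-1}(\oC_1\setminus\overline{V_0'})$ gives two Jordan domains in $\oC$, and $h_k\colon U_k'\to U_k$ is proper of degree $d$. The containment $U_k'\Subset U_k$ is equivalent to $B_{0,k}(B_{1,k}^{-1}(\overline{V_0'}))\subset D_k'$, which follows from the key scaling fact applied to the compact set $\overline{V_0'}\subset\oC_1\setminus\{b\}$.

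For (i), the symmetric application of the scaling fact to $\oC_0\setminus\overline{D_k'}\subset\oC_0\setminus\{a\}$ gives $U_k'\subset B_{1,k}^{-1}(V)$. The complementary restriction $h_k\colon\oC\setminus\overline{U_k'}\to\oC\setminus\overline{U_k}$ is a proper holomorphic map of degree $d$ between open Jordan disks with $\oC\setminus\overline{U_k}\Subset\oC\setminus\overline{U_k'}$, and carries the critical point $c_k\in\oC\setminus\overline{U_k'}$ of $h_k$ of local degree $d$ (coming from the critical point of $\psi_k$ in $D_k'$). By the Schwarz-Pick lemma, $h_k$ is a strict hyperbolic contraction on $\oC\setminus\overline{U_k'}$, so all iterates converge uniformly on compacta to a unique attracting fixed point $p_k\in\oC\setminus\overline{U_k}$. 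Let $F_k$ denote the Fatou component of $p_k$; then $F_k\supset\oC\setminus\overline{U_k'}\ni c_k$. Since $h_k^{-1}(h_k(c_k))=\{c_k\}$ with full multiplicity $d$, for $w\in F_k$ close to $h_k(c_k)$ all $d$ preimages of $w$ lie near $c_k$, hence in $F_k$; by constancy of degree, $h_k|_{F_k}\colon F_k\to F_k$ is proper of degree $d=\deg h_k$, which forces $h_k^{-1}(F_k)=F_k$. Hence $F_k$ is completely invariant and attracting, and $F_k\supset\oC\setminus U_k'\supset\oC\setminus B_{1,k}^{-1}(V)$; in particular, $\cJ(h_k)\subset B_{1,k}^{-1}(V)$.

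The main subtlety is that the scaling convergence must be applied on compact sets that depend on $k$ and are constructed simultaneously with the containments. The hypothesis $\varphi(a)\neq b$, used to separate $V_0'$ from $\overline V$, is precisely what keeps these compact sets uniformly bounded away from the holes of the limiting degenerate M\"obius maps; it also supplies the critical point of full local degree $d$ inside $F_k$ that drives the Riemann–Hurwitz/complete-invariance argument.
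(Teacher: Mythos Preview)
Your proof follows essentially the same route as the paper's: the paper normalizes coordinates and works throughout with the conjugate $H_k=B_{0,k}\circ h_k\circ B_{0,k}^{-1}$, but the underlying ideas---the convergence of the change-of-scale maps to a degenerate M\"obius, the totally ramified preimage at $a$, and the resulting degree $d$ polynomial-like restriction---are the same.

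There is one unjustified step. You assert that $h_k$ has a critical point $c_k\in\oC\setminus\overline{U_k'}$ \emph{of local degree $d$}, ``coming from the critical point of $\psi_k$ in $D_k'$.'' But as $\psi_k\to\varphi$, the critical point of $\varphi$ at $a$ (of multiplicity $d-1$) may split into several critical points of $\psi_k$ of smaller multiplicities; nothing guarantees a single $c_k$ with $\deg_{c_k}h_k=d$, and hence $h_k^{-1}(h_k(c_k))=\{c_k\}$ need not hold. Fortunately this detour is unnecessary: you have already shown that $h_k\colon\oC\setminus\overline{U_k'}\to\oC\setminus\overline{U_k}$ is proper of degree $d$ and that $\oC\setminus\overline{U_k'}\subset F_k$. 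Thus over any point of $\oC\setminus\overline{U_k}\subset F_k$ all $d$ preimages under $h_k$ lie in $\oC\setminus\overline{U_k'}\subset F_k$, so $h_k|_{F_k}$ has degree $d=\deg h_k$ and $F_k$ is completely invariant. (The paper states this complete-invariance step without argument; your Schwarz--Pick justification of the attracting fixed point is a welcome addition.)
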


\begin{proof}
  Changing  $B_{1,k}$ by $B \circ B_{1,k}$ for a suitable M\"obius transformation $B$, we may assume that $b=\infty$ and $\varphi(a) =0$.
  Similarly, we may assume that $a=0$.

  Let $H_k:= B_{0,k}\circ h_k \circ B^{-1}_{0,k}$ and
  $C_k:=B_{0,k}\circ B^{-1}_{1,k}$.
  Since $\infty \in \oC_1$ is identified with $0\in \oC_0$, it follows
  that $C_k$ converges to $C \in \partial \Rat_1$ such that $\tilde{C}=0$
  and the hole of $C$ is at $\infty$.
  Write $T_k:=B_{1,k} \circ h_k \circ B_{0,k}^{-1}$.
  
  Let $D$ be a small Jordan neighborhood of  $z=0$ ($=\varphi(a)$).
  In view of (2),
  $D':=\varphi^{-1}(D)$ 
  is a small neighborhood of $z=0$ ($=a$)
  which maps $d$-to-$1$ onto $D$, under $\varphi$.
  The uniform convergence of $T_k$ to $\varphi$ implies that
  $D'_k:=T_k^{-1}(D)$ is also a small Jordan neighborhood
  of $z=0$, say contained in $W$.
  Moreover, $H_k=C_k \circ T_k $ maps $D'_k$ onto
  $D_k:= C_k(D)$ with degree $d$.
  From the uniform convergence of $C_k$ converges to $0$ in
  $D$, we have $D_k \Subset D'_k$ for $k$ large.
  Then $H_k: \oC_0 \setminus {D'_k} \to \oC_0 \setminus {D}_k$
  is a degree $d$ polynomial like map which, after conjugacy by $B^{-1}_{0,k}$, becomes the desired polynomial like restriction of $h_k$. That is, statement (ii) holds.
  Note that $D_k'$ must be contained in a completely invariant attracting
  Fatou component of $H_k$.
  Since $C_k^{-1}$ converges uniformly to $\infty$ outside $D'_k$,
  it follows that $C_k^{-1} (\cJ(H_k)) = B_{1,k} (\cJ(h_k))$ converges, in the Hausdorff topology, to $\{\infty\}$. In particular, for $k$ large, $B_{1,k} (\cJ(h_k)) \subset V$. Similarly, $C^{-1}_{1,k}(D_k')$ contains $\oC\setminus V$ for
  $k$ large; therefore, $\oC \setminus B_{1,k}^{-1}(V)$ is contained in a completely
  invariant attracting Fatou component of $h_k$.
\end{proof}

We remark that the above polynomial like map is extracted from
a sequence $\{h_k\}$ with $2$ fully ramified times.
However, this extraction requires that assumption (2) is satisfied.
To prove Theorem~\ref{C}, we will show that assumption (2) always holds for sequences with $5$ consecutive fully ramified times.

Now we are ready to rule out monotone decreasing translations. Note that translation by $2$ is always monotone increasing from $J_{n_0-2}$
into $J_{n_0}$.

\begin{lemma}
  \label{l:increasing}
    The map $n \mapsto n +1$ is $\prec$-monotone increasing in $J_{n_0-1}$. 
\end{lemma}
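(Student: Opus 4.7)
The plan is to rule out case~\eqref{eq:2} immediately, and then contradict~\eqref{eq:1} by applying Proposition~\ref{p:julia} to $h_k = f_k^{\circ 2}$ with two different pairs of post-scalings, obtaining two incompatible localizations of $\cJ(f_k)$. Case~\eqref{eq:2} is incompatible with $m$ being the least element of $\prec$ (Lemma~\ref{l:order}), since it displays elements $\prec$-below $m$, so I assume~\eqref{eq:1} for contradiction.

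In case~\eqref{eq:1}, the associated tree $\cT$ has $\oC_m$ and $\oC_{m+2}$ adjacent at $x:=a_{m+2,m}=a_{m,m+2}$, $\oC_{m+2}$ and $\oC_{m+4}$ adjacent at $y:=a_{m+4,m+2}$, and, by translating $(m,m+2)$ via Lemma~\ref{l:order}(3), $\oC_{m+1}$ and $\oC_{m+3}$ adjacent at $w:=a_{m+3,m+1}=a_{m+1,m+3}$. From Lemma~\ref{l:order}(2) together with the continuity of $\cF_{n_0}$ at intersection points, I compute $\tilde\varphi_{m,m+1}(x)=w$, $\tilde\varphi_{m+1,m+2}(w)=y$, and $\tilde\varphi_{m+2,m+3}(x)=w$, each with local degree $d$. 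Moreover, the two intersection points $x,y\in\oC_{m+2}$ are critical for $\tilde\varphi_{m+2,m+3}$ with local degree $d$ apiece, accounting for the full critical multiplicity $2d-2$; thus $\tilde\varphi_{m+2,m+3}$ is M\"obius-conjugate to $z\mapsto z^d$ and its two critical values are distinct, giving $\tilde\varphi_{m+2,m+3}(y)\neq w$.

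Armed with these computations, I apply Proposition~\ref{p:julia} to $h_k=f_k^{\circ 2}$ with $(B_{0,k},B_{1,k})=(A_{m,k},A_{m+2,k})$: the limit $\tilde\varphi_{m+1,m+2}\circ\tilde\varphi_{m,m+1}$ has degree $d^2$, local degree $d^2$ at $a=x$, and image $y\neq x=b$, yielding $\cJ(f_k)\subset A_{m+2,k}^{-1}(V')$ for any preassigned neighborhood $V'$ of $x$ in $\oC_{m+2}$ and all large $k$. A second application with $(B_{0,k},B_{1,k})=(A_{m+1,k},A_{m+3,k})$ proceeds analogously, the crucial hypothesis becoming $\tilde\varphi_{m+2,m+3}(y)\neq w$, and yields $\cJ(f_k)\subset A_{m+3,k}^{-1}(V)$ for any preassigned neighborhood $V$ of $w$ in $\oC_{m+3}$.

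To derive the contradiction, observe that in case~\eqref{eq:1} the spheres $\oC_{m+2}$ and $\oC_{m+3}$ are not $\prec$-adjacent, so $A_{(m+2,m+3)}:=\lim_k A_{m+3,k}\circ A_{m+2,k}^{-1}$ is a degenerate M\"obius transformation with reduction $a_{m+4,m+3}$ and hole $y$. Since $V$ is chosen away from the reduction $a_{m+4,m+3}$, the pullback $(A_{m+3,k}\circ A_{m+2,k}^{-1})^{-1}(V)\subset\oC_{m+2}$ converges uniformly to the constant $y$, so it lies in an arbitrarily small neighborhood of $y$ for large $k$. Choosing $V'$ small enough to avoid that neighborhood (possible since $x\neq y$), the sets $A_{m+2,k}^{-1}(V')$ and $A_{m+3,k}^{-1}(V)$ are eventually disjoint in $\oC$, contradicting $\cJ(f_k)\neq\emptyset$. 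The main obstacle is verifying the hypothesis $\varphi(a)\neq b$ of Proposition~\ref{p:julia} in the second application; this rests on the monomial structure of $\tilde\varphi_{m+2,m+3}$ derived from the critical-point count on the interior sphere~$\oC_{m+2}$.
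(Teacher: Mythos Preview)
Your handling of case~\eqref{eq:1} is essentially the paper's argument: two applications of Proposition~\ref{p:julia} to $f_k^{\circ 2}$ yield incompatible localizations of the Julia set. Your verification of $\varphi(a)\neq b$ in the second application via the monomial structure of $\tilde\varphi_{m+2,m+3}$ is a valid alternative to the paper's route (which uses continuity of $\cF_{n_0}$ at $a_{m+4,m+2}$ to identify $\varphi(a)=a_{m+5,m+3}\neq a_{m+1,m+3}$). A minor inaccuracy: the reduction of $A_{(m+2,m+3)}$ is $a_{m+2,m+3}$, which equals $a_{m+4,m+3}$ only when $n_0=m+4$; in general it is the intersection point of $\oC_{m+3}$ with its $\prec$-predecessor. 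This does not affect the argument, since all you need is $a_{m+2,m+3}\neq a_{m+1,m+3}$.

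Your dismissal of case~\eqref{eq:2}, however, is a genuine gap. You invoke the clause ``with least element $m$'' from the statement of Lemma~\ref{l:order}, but if you inspect that lemma's proof you will see it only establishes $m\prec m+1$: the order is defined along the interval graph with endpoints $a,b$, oriented so that $m+1$ lies between $m$ and $b$, and nothing forces $m=a$. Case~\eqref{eq:2} is entirely compatible with $m\prec m+1$ and with translation being monotone (decreasing), so it must be excluded by a separate argument. The paper does this by locating the Julia set between $\oC_m$ and $\oC_{m+1}$ via Proposition~\ref{p:julia}, then constructing a scaling $A_{0,k}$ for which $A_{0,k}\circ f_k^{\circ 2}\circ A_{0,k}^{-1}\to z^{d^2}$ in $\Rat_{d^2}$, contradicting divergence of $[f_k]$ in $\ratd$ by properness of the iterate map. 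You need to supply this (or an equivalent) argument.
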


\begin{proof}
  We proceed by contradiction and suppose
  that $n \mapsto n+1$ is $\prec$-orientation reversing.
In view of Lemma~\ref{l:order},  Proposition \ref{p:julia} applies to
$\{f^{\circ 2}_k\}$ for each one of the following situations:
\begin{enumerate}
\item[(i)] $B_{0,k}=A_{m,k}$, $B_{1,k} = A_{m+2,k}$, {$a=a_{m+2,m}$,
  $b = a_{m,m+2}$} and $\varphi = \varphi_{m+1,m+2}\circ \varphi_{m,m+1}$.
\item[(ii)] $B_{0,k}=A_{m+1,k}$, $B_{1,k} = A_{m+3,k}$, {$a=a_{m+3,m+1}$,
  $b = a_{m+1,m+3}$} and $\varphi = \varphi_{m+2,m+3}\circ \varphi_{m+1,m+2}$.
\end{enumerate}
Note that in both situations, by Lemma \ref{l:order}, $\deg_a \varphi =d^2$.
Moreover,  continuity of $\cF_{n_0}$ together
with the consecutive preserving property implies that
{$\varphi(a)=a_{m+4,m+2}\neq b$}  in (i), and
{$\varphi(a)=a_{m+5,m+3}\neq b$}  in (ii).

Suppose that  \eqref{eq:1} occurs. That is,
we have the concatenation of spheres:
$$\oC_{m}, \oC_{m+2}, \oC_{m+4}, \dots,  \oC_{m+5}, \oC_{m+3}, \oC_{m+1}.$$
Intuitively, Proposition \ref{p:julia} yields that
the Julia set $A_{m+2,k} (\cJ(f^{\circ 2}_k))$ is, in  $\oC_{m+2}$,
close to $\oC_m$, 
and  simultaneously, $A_{m+3,k} (\cJ(f^{\circ 2}_k))$ is, in $\oC_{m+3}$, close to $\oC_{m+1}$, which is impossible.
More precisely, let $W$ be a small neighborhood of
$a_{m,m+2}$ and $W'$ be a small neighborhood of $a_{m+1,m+3}$.
  Note that $(A_{m+3,k}\circ A_{m+2,k}^{-1})(W)$ converges to $a_{m+2,m+3} \neq a_{m+1,m+3}$ since the hole of $A_{(m+2,m+3)}$ is $a_{m+3,m+2} \neq a_{m,m+2}$.
Thus, for sufficiently large $k$,
  $$(A_{m+3,k}\circ A_{m+2,k}^{-1}) (W) \cap W' = \emptyset.$$
  Moreover, by Proposition~\ref{p:julia}, we have $\cJ(f_k^{\circ 2}) \subset A_{m+2,k}^{-1} (W) \cap A_{m+3,k}^{-1} (W')$. Therefore, 
  $$A_{m+3,k} (\cJ(f_k^{\circ 2})) \subset (A_{m+3,k}\circ A_{m+2,k}^{-1}) (W) \cap W' = \emptyset,$$
  which is a contradiction and \eqref{eq:1} cannot occur.

  Now suppose that  the order is as in \eqref{eq:2}. 
  Then we have the concatenation of spheres:
  $$\dots,\oC_{m+4}, \oC_{m+2}, \oC_{m},  \oC_{m+1}, \oC_{m+3},\oC_{m+5},\dots$$
  Intuitively, Proposition \ref{p:julia} yields that 
    the Julia set $\cJ(f^{\circ 2}_k)$ is between $\oC_{m}$ and  $\oC_{m+1}$, for $k$ large. We will find coordinate changes $A_{0,k}$ such that $A_{0,k}\circ f^{\circ 2}_k\circ A_{0,k}^{-1}$ converges to $z \mapsto z^{d^2}$ in $\Rat_{d^2}$,
    which contradicts the assumption that $[f_k]$ diverges in $\ratd$, since
    $\ratd \ni [f] \mapsto [f^{\circ 2}] \in \rat_{d^2}$ is a proper map.
  
    Consider a repelling fixed point $z_k\in\oC$ of $f_k$.
    By Proposition~\ref{p:julia}, $A_{m,k} (z_k) \to {a_{m+1,m}}$ and
    $A_{m+1,k}(z_k)\to {a_{m, m+1}}$.
  Choose $\alpha \in \oC_{m+4}$ and $\beta \in \oC_{m+5}$ such that
  $\alpha \neq a_{m+2,m+4}$ and $\beta \neq a_{m+3,m+5}$. 
  Let $A_{0,k}\in\Rat_1$ be defined by 
  \begin{enumerate}
  \item $A_{0,k}(z_k)=1$,
  \item $A_{0,k}\circ A_{m+4,k}^{-1}(\alpha)=0$,
  \item $A_{0,k}\circ A_{m+5,k}^{-1}(\beta)=\infty$.
  \end{enumerate}

  It is not difficult to check that
   $\{A_{n,k}: n=0, m\le n \le m+5\}$ is a collection of
  independent scalings and the associated space
  is the concatenation of spheres
  $$\oC_{m+4}, \oC_{m+2},\oC_m, \oC_0,\oC_{m+1},\oC_{m+3},\oC_{m+5}.$$
  Moreover, $a_{m+1,m} \in \oC_m$ is identified with $0 \in \oC_0$
  and $a_{m,m+1} \in \oC_{m+1}$ is identified with $\infty \in \oC_0$.

  Let $h_k:=A_{0,k} \circ f^{\circ 2}_k \circ A^{-1}_{0,k}$. 
  Since $h_k(1) =1$ for all $k$, to prove that $h_k(z)$ converges to $z^{d^2}$,
  it is sufficient to show that the poles of $h_k$ converge to $z=\infty$ and
  the zeros converge to $z=0$. 
  Observe that
  $T_k:=A_{m+5,k} \circ f^{\circ 2}_k \circ A^{-1}_{m+3,k}$ converges uniformly
  to a map $\varphi \in \Rat_{d^2}$.
  Moreover, $\varphi$
  maps  $a_{m+1,m+3}$  onto  $a_{m+3,m+5}$ with local degree $d^2$.
  Hence, the preimages of $\beta$, under $T_k$, are uniformly bounded away
  from $a_{m+1,m+3}$. Since
  $$h^{-1}_k(\infty)= (A_{0,k} \circ A^{-1}_{m+3,k}) \circ T^{-1}_k (\beta),$$
  and the hole of $A_{(m+3,0)}$ is $a_{m+1,m+3}$, it follows that all the poles
  of $h_k$ converge to $a_{m+3,0}=\infty \in \oC_0$.
  Similarly, all the zeros of $h_k$ converge to $0 \in \oC_0$.
\end{proof}

\begin{lemma}
  Statements (0.$n_0$)--(5.$n_0$) hold.
\end{lemma}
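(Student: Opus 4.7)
The plan is to assemble the three preceding lemmas into the six required assertions, with essentially no new argument. First I observe that (0.$n_0$) is precisely the conclusion of Lemma \ref{lem:non}, applied to every pair $m \le n < n' \le n_0$; in particular this ensures that $\{A_{n,k}: m \le n \le n_0\}$ is a collection of independent post-scalings, so that the space $\cS_{n_0}$ is well defined and the topological analysis of \S\ref{s:appendix} applies.

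Next, for (1.$n_0$), I combine Lemma \ref{l:order} with Lemma \ref{l:increasing}. The former provides a total order $\prec$ on $J_{n_0}$ with $m$ as least element; the latter forces $n \mapsto n+1$ to be $\prec$-increasing on $J_{n_0-1}$. It follows that $\prec$ coincides with the usual order, so by Lemma \ref{l:order}(1), $\oC_n \cap \oC_{n'} \ne \emptyset$ exactly when $|n-n'|=1$. The unique intersection point of consecutive spheres is, by the definition of $\sim$, the point $a_{n+1,n} \in \oC_n$ identified with $a_{n,n+1} \in \oC_{n+1}$. Since $\oC_{n-1}$ and $\oC_{n+1}$ are disjoint in $\cS_{n_0}$ for $m < n < n_0$, their retractions to $\oC_n$ give $a_{n-1,n} \ne a_{n+1,n}$. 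For (2.$n_0$), I combine continuity of $\cF_{n_0}$ (Proposition \ref{p:critical}) with Lemma \ref{l:order}(3): when $m < n < n_0$, the point $a_{n-1,n} \in \oC_{n-1}\cap\oC_n$ must map under $\cF_{n_0}$ into $\oC_n \cap \oC_{n+1} = \{a_{n,n+1}\}$, which in the $\oC_{n+1}$-chart reads $\tilde\varphi_{n,n+1}(a_{n-1,n}) = a_{n,n+1}$. The identity $\tilde\varphi_{n,n+1}(a_{n+1,n}) = a_{n+2,n+1}$ for $m \le n < n_0-1$ is obtained the same way from $\cF_{n_0}(\oC_n \cap \oC_{n+1}) \subset \oC_{n+1}\cap \oC_{n+2}$.

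Finally, (3.$n_0$)--(5.$n_0$) all follow from Lemma \ref{l:order}(2). For $m < n < n_0$, the consecutive pairs $(n-1,n)$ and $(n,n+1)$ give $\deg_{a_{n-1,n}} \tvarphi_{n,n+1} = d$ together with $\deg_{a_{n,n-1}} \tvarphi_{n-1,n} = d$ and $\deg_{a_{n+1,n}} \tvarphi_{n,n+1} = d$. Since $\deg \tvarphi_{n,n+1} \le d$, the local degree $d$ at $a_{n-1,n}$ already saturates the total degree of $\tvarphi_{n,n+1}$; hence $\deg \tvarphi_{n,n+1} = d$ and $\varphi_{n,n+1}$ has no holes. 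This immediately yields (3.$n_0$), and (4.$n_0$) since both $a_{n\pm 1,n}$ then carry local degree $d$; for (5.$n_0$), the depth at $a_{n-1,n}$ vanishes while $\deg_{a_{n-1,n}}\tvarphi_{n,n+1}=d$, so the two terms sum to $d$.

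The conceptual work has already been carried out in Lemmas \ref{lem:non}, \ref{l:order} and \ref{l:increasing}, so there is no substantive obstacle at this step. The only subtlety is bookkeeping the range of indices in (2.$n_0$)--(5.$n_0$): one must check that every occurrence of a consecutive pair fits inside $J_{n_0}$, which is exactly what the range restrictions $m < n < n_0$ and $m \le n < n_0-1$ built into the statements guarantee.
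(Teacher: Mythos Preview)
Your proposal is correct and follows essentially the same approach as the paper's proof: you assemble Lemma~\ref{lem:non} for (0.$n_0$), Lemmas~\ref{l:order} and~\ref{l:increasing} for (1.$n_0$), continuity of $\cF_{n_0}$ via Proposition~\ref{p:critical} for (2.$n_0$), and Lemma~\ref{l:order}(2) together with the fully ramified hypothesis for (3.$n_0$)--(5.$n_0$). Your write-up is more detailed than the paper's terse version, but the logic is the same.
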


\begin{proof}
  As observed above, Lemma~\ref{lem:non} is (0.$n_0$).  By Lemmas~\ref{l:order} ~and \ref{l:increasing}, the space $\cS_{n_0}$ is the
  concatenation of spheres indexed by $J_{n_0}$ in
  the standard order of the integers. Therefore, statement (1.$n_0$) holds. Statement (2.$n_0$) follows from
  the continuity of $\cF_{n_0}$, furnished by Proposition~\ref{p:critical}.
  Statement (3.$n_0$) is trivially true and statement (4.$n_0$) is a direct consequence of Lemma~\ref{l:order} (2). Finally, statement (5.$n_0$) follows since
  $\varphi_{n,n+1}$ has no holes and the local degree at $a_{n-1,n}$ is $d$.
\end{proof}

We are now ready to deduce Theorem~\ref{C}.

\begin{proof}[Proof of Theorem~\ref{C}]
 
  Assume that $\{f_k\} \subset \Rat_d$, $\{A_k\}$
  and $\{B_k\}$ are as in the statement of the theorem.
  We proceed by contradiction. After passing to a
  subsequence we suppose that $f_k$ lacks of a degree $d$ polynomial
  like restriction for all $k$.
  Then $h_k:=A_k \circ f_k \circ A_k^{-1}$ also has no degree $d$ polynomial
  like restriction.
  Passing to a further subsequence, 
  $\llbracket A_k \circ f_k^{\circ n} \circ A_k^{-1}]$ converges to some $\llbracket \varphi_n ]$ 
  for all $n$. Moreover, $\varphi_5 \in \Rat_{d^5}$.
  From Lemma~\ref{l:decomposition}, the sequence $\{h_k\}$ has fully ramified
  times $n=1, \dots, 5$. Let $\{A_{n,k}\}$ be the collection of scalings
  such that $A_{n,k} \circ h_k^{\circ n} \to \varphi_n$.
  Then we are under the hypothesis of Theorem~\ref{p:structure}
  for $\{h_k\}$ and $m=1$. Therefore, statements (0.$5$)--(5.$5$) hold for $\{h_k\}$.
  In particular, $\cS_4$
  is the concatenation of $4$ spheres $\oC_1,\dots,\oC_4$.
  Moreover, $\varphi_{2,3}$ maps $\oC_2$ onto $\oC_3$ by a degree
  $d$ map. The intersection points of $\oC_2$ with $\oC_1$ and $\oC_3$ map,
  by $\varphi_{2,3}$,
  onto the (distinct) intersection points of $\oC_3$ with $\oC_2$ and $\oC_4$
  with local degree $d$. Thus, we are under the hypothesis
  of Proposition~\ref{p:julia}, for $\{A_{n,k}: n=1,2\}$.
  For $k$ large, we conclude the existence of
  a degree $d$ polynomial like restriction of
  $h_k$ and, therefore, of $f_k$.  
\end{proof}


 \subsubsection{Proof of the Structure Theorem for not fully ramified times}
\label{s:notfr} 
We continue with the induction.
Assume that (0.$\ell$)--(5.$\ell$) from \S \ref{h:inductive} hold
for some $\ell\ge n_0$.
The proofs of the corresponding statements for $\ell+1$
are organized in a series of lemmas below.

\smallskip
We employ the definitions and results from Appendix~\ref{s:appendix}.
Let $J := \{ n : m \le n \le \ell\}$ and
 consider the collection of independent scalings $\mathbf{A}:=
 \{A_{n,k}:  n \in J \}$ indexed by $J$. The space associated
 to $\mathbf{A}$ is $\cS_{\ell}$, the concatenation of the spheres $\oC_m,\dots,\oC_\ell$.
 Denote by $\cA_k$ the corresponding embedding of
 $\oC$ into $\oC^{J}$ and by
 $\rho_{\ell,j}$ the retraction of $\cS_{\ell}$
 onto $\oC_j$. For all $n \neq j$,
 note that  $\rho_{\ell,j}(\oC_n) = \{a_{n,j}\}\subset \oC_j$.

 \begin{lemma}
   \label{l:indep}
   $A_{(\ell+1,j)}$ is degenerate for all $j \in J$.
 \end{lemma}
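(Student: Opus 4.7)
The plan is a proof by contradiction combining the cocycle identity for the changes of coordinates with properness of the iterate map.

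Suppose, for contradiction, that $A_{(\ell+1,j)}\in\Rat_1$ for some $j\in J$. First, passing to the limit in
\[
A_{n,k}\circ A_{\ell+1,k}^{-1} = \bigl(A_{n,k}\circ A_{j,k}^{-1}\bigr)\circ\bigl(A_{j,k}\circ A_{\ell+1,k}^{-1}\bigr),
\]
I would obtain the cocycle $A_{(\ell+1,n)} = A_{(j,n)}\circ A_{(\ell+1,j)}$ in $\oRat_1$ for every $n\in J$ (there is no indeterminacy issue because the reduction of the non-degenerate factor $A_{(\ell+1,j)}$ is a non-constant M\"obius, hence never identically equal to the hole of $A_{(j,n)}$). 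The inductive hypothesis $(0.\ell)$ gives $A_{(j,n)}\in\partial\Rat_1$ for each $n\in J\setminus\{j\}$, and composing a degenerate M\"obius on the left with a non-degenerate one on the right preserves constant reduction, so $A_{(\ell+1,n)}\in\partial\Rat_1$ for every $n\neq j$ in $J$. Thus at most one index $j\in J$ could fail to be degenerate.

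Next, setting $r:=\ell+1-j\ge 1$, I would consider the conjugate representative $A_{j,k}\circ f_k^{\circ r}\circ A_{j,k}^{-1}$ of $[f_k^{\circ r}]\in\rat_{d^r}$. Decomposing
\[
A_{j,k}\circ f_k^{\circ r}\circ A_{j,k}^{-1} = \bigl(A_{j,k}\circ A_{\ell+1,k}^{-1}\bigr)\circ\bigl(A_{\ell+1,k}\circ f_k^{\circ r}\circ A_{j,k}^{-1}\bigr)
\]
and iterating Lemma \ref{l:decomposition} in the second factor, this converges to $A_{(\ell+1,j)}\circ \Phi$, where $\Phi := \varphi_{\ell,\ell+1}\circ\cdots\circ\varphi_{j,j+1}\in\oRat_{d^r}^*$. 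If $\Phi$ is non-degenerate then $A_{(\ell+1,j)}\circ\Phi\in\Rat_{d^r}$, so $[f_k^{\circ r}]$ would converge inside $\rat_{d^r}$, contradicting $[f_k^{\circ r}]\to\partial\rat_{d^r}$ (which follows from $[f_k]\to[g]\in\partial\rat_d$ and properness of the $r$-th iterate map), exactly as in Lemma \ref{lem:non}.

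The hard part is verifying that $\Phi$ is non-degenerate, since a priori the factors $\varphi_{i,i+1}$ only live in $\Rat_d^*$ and can have non-trivial holes. To close this gap I would exploit the inductive structural information $(3.\ell)$ and $(5.\ell)$, which pin down the unique hole of each $\varphi_{n,n+1}$ at $a_{n-1,n}$ with depth exactly $d-\deg\tvarphi_{n,n+1}$ and force $\tvarphi_{n,n+1}$ to be a polynomial of a specific degree, combined with the uniqueness of $j$ from the first step, to track the reduction degree of $\Phi$ and rule out degeneracy. This last step is the main technical obstacle and cannot be extracted from the cocycle identity alone.
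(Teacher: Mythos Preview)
Your approach has a genuine gap that cannot be closed along the lines you sketch. The properness argument requires $\Phi = \varphi_{\ell,\ell+1}\circ\cdots\circ\varphi_{j,j+1}$ to lie in $\Rat_{d^r}$, equivalently $\deg\tilde\Phi = \prod_{n=j}^{\ell}\deg\tilde\varphi_{n,n+1} = d^{r}$, which forces every intermediate time $n+1$ with $j+1 \le n+1 \le \ell+1$ to be fully ramified. But the inductive step is carried out precisely for $\ell \ge n_0$, where times beyond the initial block $m,\dots,n_0$ need not be fully ramified; as soon as one non--fully-ramified time occurs between $j+1$ and $\ell+1$, the product drops below $d^r$, $\Phi$ is degenerate, and your contradiction evaporates. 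The hypotheses $(3.\ell)$ and $(5.\ell)$ do not help here: they locate the unique hole of each $\varphi_{n,n+1}$ at $a_{n-1,n}$ and compute its depth as $d-\deg_{a_{n-1,n}}\tilde\varphi_{n,n+1}$, but this depth vanishes only at fully ramified times, and nothing in the induction forces that. The uniqueness of $j$ from your cocycle step is correct, yet it places no constraint on the reduction degrees of the factors $\varphi_{n,n+1}$, so it cannot rescue the argument.

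The paper proceeds by an entirely different mechanism that sidesteps this issue. Assuming $A_{(\ell+1,j)}\in\Rat_1$, one normalizes $A_{\ell+1,k}=A_{j,k}$ so that $\{f_k\}$ maps the collection $\{A_{n,k}:n\in J\}$ to itself via $\tau(n)=n+1$ for $n<\ell$ and $\tau(\ell)=j$. The preimage-counting Proposition~\ref{p:not-fully}, combined with $(5.\ell)$ at the sphere $\oC_{j-1}$, shows that points of $\cA_k(\oC)$ near $\rho_{\ell,j}^{-1}(a_{j-1,j})=\oC_m\cup\cdots\cup\oC_{j-1}$ already have all $d$ of their $G_k$-preimages near $\oC_m\cup\cdots\cup\oC_{j-2}$, hence far from $\oC_\ell$. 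But since $\tilde\varphi_{\ell,\ell+1}=\tilde\varphi_{\ell,j}$ is a non-constant map onto $\oC_j$, a generic point of $\oC_j$ close to $a_{j-1,j}$ also has a preimage in $\oC_\ell$ --- too many preimages, a contradiction. This argument uses only the tree-of-spheres preimage count and the single inductive hypothesis $(5.\ell)$; it never needs any $\varphi_{n,n+1}$ to be non-degenerate.
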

 \begin{proof}
   By contradiction, suppose $A_{(\ell+1,j)}$ is non-degenerate.
 Without loss of generality, we may assume that
 $A_{\ell+1,k} = A_{j,k}$, for all $k$.
 In the language of Appendix~\ref{s:appendix},
the sequence $\{f_k\}$ maps  $\mathbf{A}$ to $\mathbf{A}$
 by $\tau:J\to J$ where $\tau(n) = n+1$ for $n < \ell$ and $\tau(\ell)=j$.
 Consider $G_k : \cA_k(\oC) \to \cA_k(\oC)$ defined by
 $$G_k := \cA_k \circ f_k \circ \cA_k^{-1}.$$
 In view of the inductive hypothesis (5.$\ell$) and Proposition~\ref{p:not-fully} applied to $j-1$,
 every point in $\cA_k (\oC)$, contained in a small neighborhood $W_0$ of 
  $\rho_{\ell,j}^{-1}(a_{j-1,j})=
 \oC_{m} \cup \cdots \cup \oC_{j-1}$, has $d$ preimages
 under $G_k$, close to $\rho_{\ell,j-1}^{-1} (a_{j-2,j-1})=\oC_m \cup
 \dots \cup \oC_{j-2}$, which is bounded away from $\oC_{\ell}$.
 But $W_0$ contains points $w_0$ in  $\oC_{j}$
 close to $a_{j-1,j}$ with a preimage $z_0$ in $\oC_{\ell}$, under $\tilde\varphi_{\ell,j}=
 \tilde\varphi_{\ell,\ell+1}$.
 It follows that points in $\cA_k (\oC)$, close to  $w_0$,
 simultaneously have a $G_k$-preimage close to  $z_0 \in \oC_{\ell}$ and
 $d$ preimages close to $\rho_{\ell,j-1}^{-1} (a_{j-2,j-1})$, which  is impossible.
\end{proof}

Let $J':=\{ j : m \le n \le \ell +1 \}$. From the previous lemma,
$\mathbf{B} :=
\{ A_{n,k} : n \in J'\}$ is a collection of independent scalings indexed by
$J'$. 
It follows that $\{f_k\}$ maps $\mathbf{A}$ to
$\mathbf{B}$ by $\tau(n)=n+1$.
For all $j \in J'$,
 recall that $\rho_{\ell+1,j}$ is the retraction from  $\cS_{\ell+1}$, the space associated to $\mathbf{B}$,
 onto $\oC_j$.
 Let $\cB_k$ be the corresponding embedding of $\oC$ into $\oC^{J'}$
 and $$F_k:= \cB_k \circ f_k \circ \cA_k^{-1}.$$

 \begin{lemma}
   \label{l:concatenation}
  $a_{\ell+1,\ell} \neq a_{\ell-1,\ell} \in \oC_{\ell}$. Moreover,
  $\cS_{\ell+1}$ is obtained from the union of $\oC_m,\dots,\oC_{\ell+1}$
  by identifying $a_{n+1,n}$ with $a_{n,n+1}$ for $m\le n < \ell+1$.
\end{lemma}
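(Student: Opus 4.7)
The plan is to establish the two assertions in sequence, using the inductive hypothesis $(1.\ell)$--$(5.\ell)$ together with the preimage-counting machinery from Appendix~\ref{s:appendix} (Proposition~\ref{p:not-fully}) applied to the embedding
\[
F_k = \cB_k \circ f_k \circ \cA_k^{-1} : \cA_k(\oC) \to \cB_k(\oC),
\]
which, in the limit, models $\tvarphi_{n,n+1}:\oC_n\to\oC_{n+1}$ direction by direction.

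First I would show that $\oC_{\ell+1}$ attaches to $\oC_\ell$ in $\cS_{\ell+1}$ and nowhere else. By Lemma~\ref{l:indep}, each $A_{(\ell+1,j)}$ is degenerate, so $\oC_{\ell+1}$ must attach somewhere. Suppose for contradiction that its attachment in $\cS_{\ell+1}$ lies on some $\oC_n$ with $m\le n<\ell$. Then $\rho_{\ell+1,\ell}(\oC_{\ell+1}) = \{a_{\ell+1,\ell}\} = a_{n,\ell}$, so a generic point $w\in\oC_{\ell+1}$ would lie, from the $\oC_\ell$ viewpoint, in the direction $a_{n,\ell}\in\oC_\ell$. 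But $\tvarphi_{\ell,\ell+1}:\oC_\ell\to\oC_{\ell+1}$ is surjective and, by $(5.\ell)$ applied to $\varphi_{\ell,\ell+1}$, its behaviour near the attachment $a_{\ell-1,\ell}$ accounts for the entire depth $d$. Counting preimages of a generic $w$ via Proposition~\ref{p:not-fully} in the direction $\oC_\ell\subset\cS_\ell$ would then force all $d$ preimages to concentrate near $a_{n,\ell}$ (rather than being spread over $\oC_\ell$ as the degree of $\tvarphi_{\ell,\ell+1}$ dictates), a contradiction. Hence $\oC_{\ell+1}$ must attach directly to $\oC_\ell$.

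Second, I would rule out $a_{\ell+1,\ell}=a_{\ell-1,\ell}$. If these points coincided, then $\oC_{\ell-1}$, $\oC_\ell$, and $\oC_{\ell+1}$ would share a common point in $\cS_{\ell+1}$, and the whole of $\oC_{\ell-1}$ together with the attachment on $\oC_\ell$ would lie in the direction $a_{\ell+1,\ell}$ from $\oC_\ell$'s perspective. Applying Proposition~\ref{p:not-fully} to $F_k$ at a generic point $w\in\oC_{\ell+1}$: the identity $(5.\ell)$ gives exactly $d = d_{a_{\ell-1,\ell}}(\varphi_{\ell,\ell+1})+\deg_{a_{\ell-1,\ell}}\tvarphi_{\ell,\ell+1}$ preimages concentrating at $a_{\ell-1,\ell}\in\oC_\ell$ alone; but if this point coincided with the attachment of $\oC_{\ell+1}$, then additional preimages coming from $\tvarphi_{\ell-1,\ell}^{-1}$ in $\oC_{\ell-1}$ (nontrivial by $(3.\ell)$ and the fact that $\tvarphi_{\ell-1,\ell}$ has full degree $\ge 2$) would have to be attributed to the same direction, overshooting the local degree budget. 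The resulting contradiction forces $a_{\ell+1,\ell}\neq a_{\ell-1,\ell}$.

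With these two facts in hand, the second sentence of the lemma is immediate: the underlying tree of the collection $\{A_{n,k}: n\in J'\}$ is obtained from the tree of $\{A_{n,k}: n\in J\}$ (a linear interval by $(1.\ell)$) by attaching $\oC_{\ell+1}$ to $\oC_\ell$ at the new point $a_{\ell+1,\ell}\neq a_{\ell-1,\ell}$, which is precisely a linear concatenation extending one step further. The main obstacle I expect is the careful translation of the non-Archimedean "surplus multiplicity vs.\ multiplicity" intuition into the Archimedean preimage-count in the first step: one must set up the neighborhoods in $\cB_k(\oC)$ so that the counts from Proposition~\ref{p:not-fully}, combined with $(5.\ell)$, genuinely contradict a non-linear attachment rather than merely producing compatible but degenerate scenarios.
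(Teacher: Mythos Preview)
Your argument has a genuine gap: in both your first and second steps you invoke ``$(5.\ell)$ applied to $\varphi_{\ell,\ell+1}$,'' but $(5.\ell)$ only asserts
\[
d = d_{a_{n-1,n}}(\varphi_{n,n+1}) + \deg_{a_{n-1,n}}\tvarphi_{n,n+1}
\]
for $m<n<\ell$. It says nothing about $n=\ell$; that case is exactly $(5.\ell+1)$, established later in Lemma~\ref{l:poly}. So the depth--degree identity at $a_{\ell-1,\ell}$ for $\varphi_{\ell,\ell+1}$ is not yet available, and your preimage count at level $\ell$ does not yield the claimed ``all $d$ preimages concentrate at $a_{\ell-1,\ell}$.''

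The paper avoids this by applying Proposition~\ref{p:not-fully} one level down, at $j=\ell-1$, with $z_0=a_{\ell-2,\ell-1}$ and $w_0=a_{\ell-1,\ell}\in\oC_\ell$. Here $(5.\ell)$ \emph{does} apply (to $\varphi_{\ell-1,\ell}$), and together with $(2.\ell)$ it gives that all $d$ preimages of any point near $\rho_{\ell+1,\ell}^{-1}(a_{\ell-1,\ell})$ lie near $\rho_{\ell,\ell-1}^{-1}(a_{\ell-2,\ell-1})=\oC_m\cup\cdots\cup\oC_{\ell-2}$, hence bounded away from $\oC_\ell$. If $a_{\ell+1,\ell}=a_{\ell-1,\ell}$, a generic point of $\oC_{\ell+1}$ is such a point, yet it also has an $F_k$-preimage close to $\oC_\ell$ since $\tvarphi_{\ell,\ell+1}$ is non-constant --- contradiction. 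Note also that your two steps collapse into one: by $(1.\ell)$ we have $a_{n,\ell}=a_{\ell-1,\ell}$ for every $n<\ell$, so ``$\oC_{\ell+1}$ attaches to some $\oC_n$ with $n<\ell$'' is equivalent to $a_{\ell+1,\ell}=a_{\ell-1,\ell}$. The paper exploits this, proving the inequality first and reading off the concatenation as an immediate consequence.
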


\begin{proof}
 By contradiction, suppose that $a_{\ell+1,\ell} = a_{\ell-1,\ell}\in \oC_{\ell}$.
 Then $\oC_{\ell-1}, \oC_{\ell}, \oC_{\ell+1}$ intersect at 
  $a_{\ell,\ell-1} \in \oC_{\ell-1}$, which is identified with $a_{\ell-1,\ell} \in \oC_\ell$.
Thus $\rho_{\ell+1,\ell}^{-1} (a_{\ell-1,\ell})$ contains $\oC_{\ell+1}$.
Any point in $\cB_k(\oC)$ close to a generic point $w_0$ of $\oC_{\ell+1}$ has a $F_k$-preimage in $\cA_k(\oC)$ close to $\oC_{\ell} \subset \cS_\ell$.
Proposition ~\ref{p:not-fully} and (5.$\ell$) yield a contradiction, since the 
 $d$ preimages of $w_0$ are close to $\rho_{\ell, \ell-1}^{-1} (a_{\ell-2,\ell-1})=
 \oC_{m} \cup \cdots \cup \oC_{\ell-2}$, which is bounded away from
 $\oC_{\ell}$.
 Thus, $a_{\ell+1,\ell} \neq a_{\ell-1,\ell}\in \oC_{\ell}$

We conclude that in $\cS_{\ell+1}$ the sphere
$\oC_{\ell+1}$ has non-empty intersection only with $\oC_\ell$; for otherwise, if $\oC_{\ell+1} \cap \oC_n \neq \emptyset$ for some $n \le \ell-1$, then $a_{\ell+1,\ell} = a_{\ell-1,\ell}$. Therefore, $\cS_{\ell+1}$
is the claimed concatenation of spheres.
\end{proof}

According to the next lemma, in coordinates  for $\oC_\ell$ and $\oC_{\ell+1}$
where  $a_{\ell-1,\ell}$ and
$a_{\ell,\ell+1}$ are at $\infty$, the map $\tilde{\varphi}_{\ell,\ell+1}$
becomes a polynomial. Moreover,
$a_{\ell-1,\ell}$ is the unique hole of $\varphi_{\ell,
  \ell+1}$, if $\varphi_{\ell,\ell+1}\in \partial\Rat_d$. 
In particular, (5.$\ell+1$) and the first assertion of (2.$\ell+1$) 
hold.

\begin{lemma}
  \label{l:poly}
  Let $a :=a_{\ell-1,\ell}$ and
  $a':= a_{\ell,\ell+1}$ and $\varphi:={\varphi}_{\ell,\ell+1}$.
  Then $\tilde{\varphi}^{-1}(a')
= a$ and $$d= \deg_a \tilde{\varphi}+ d_a (\varphi).$$
\end{lemma}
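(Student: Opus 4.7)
My plan is first to derive $\tilde{\varphi}(a)=a'$ from the continuity of $\cF_{\ell+1}$, and then to pin down the depth equality via a two-fold application of Proposition~\ref{p:not-fully}.

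By Proposition~\ref{p:critical}, $\cF_{\ell+1}:\cS_\ell\to\cS_{\ell+1}$ is continuous. The point $a=a_{\ell-1,\ell}\in\oC_\ell$ is identified in $\cS_\ell$ with $a_{\ell,\ell-1}\in\oC_{\ell-1}$ by the inductive hypothesis (1.$\ell$). The image of this identified point seen from the $\oC_{\ell-1}$ side is $\tilde{\varphi}_{\ell-1,\ell}(a_{\ell,\ell-1})\in\oC_\ell\subset\cS_{\ell+1}$, while from the $\oC_\ell$ side it is $\tilde{\varphi}(a)\in\oC_{\ell+1}$. Since $\oC_\ell\cap\oC_{\ell+1}=\{a'\}$ in $\cS_{\ell+1}$ by Lemma~\ref{l:concatenation}, both expressions must equal $a'$; in particular, the useful byproduct $\tilde{\varphi}_{\ell-1,\ell}(a_{\ell,\ell-1})=a_{\ell+1,\ell}$ holds.

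For the equality $d=\deg_a\tilde{\varphi}+d_a(\varphi)$, I would count the $d$ preimages of $a'$ under $F_k$ in $\cS_\ell$ by applying Proposition~\ref{p:not-fully} twice. Applied at $n=\ell$ with $z_0=a$, it yields $d_a(\varphi)+\deg_a\tilde{\varphi}$ preimages in $\rho_{\ell,\ell}^{-1}(a)=\{a\}\cup\bigcup_{j<\ell}\oC_j$. Applied at $n=\ell-1$ with $z_0=a_{\ell,\ell-1}$, and using the byproduct $\tilde{\varphi}_{\ell-1,\ell}(a_{\ell,\ell-1})=a_{\ell+1,\ell}$ together with the inductive hypothesis (3.$\ell$) giving $\deg_{a_{\ell,\ell-1}}\tilde{\varphi}_{\ell-1,\ell}=\deg\tilde{\varphi}$---and with (1.$\ell$) and (5.$\ell$) ensuring $a_{\ell,\ell-1}$ is not a hole of $\varphi_{\ell-1,\ell}$---it yields $\deg\tilde{\varphi}$ preimages in $\rho_{\ell,\ell-1}^{-1}(a_{\ell,\ell-1})=\oC_\ell$. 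These two subsets cover $\cS_\ell$ with intersection $\{a\}$, so inclusion--exclusion against the total $d$ gives
\[
P=\deg\tilde{\varphi}+d_a(\varphi)+\deg_a\tilde{\varphi}-d,
\]
where $P$ denotes the number of preimages whose tree-point in $\cS_\ell$ is exactly $a$.

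The main obstacle is to show $P=\deg\tilde{\varphi}$, after which the required identity follows at once. Intuitively, the $d_a(\varphi)+\deg_a\tilde{\varphi}$ preimages clustered near $a$ in the $\oC_\ell$-scale split into $\deg_a\tilde{\varphi}$ ramification-type preimages, which remain at the tree-point $a$, and $d_a(\varphi)$ hole-escape preimages, which drop into the interior of $\oC_{\ell-1}$ since $A_{(\ell,\ell-1)}$ has reduction $a_{\ell,\ell-1}$ and hole $a$; symmetrically, any preimages in $\tilde{\varphi}^{-1}(a')\setminus\{a\}$ sit in the interior of $\oC_\ell$ and account for the extra $\deg\tilde{\varphi}-\deg_a\tilde{\varphi}$ preimages in the count over $\oC_\ell$. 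Formalizing this tree-direction accounting through Proposition~\ref{p:not-fully}---the Archimedean analogue of the Berkovich formula for local degree in terms of tangent directions---yields $P=\deg\tilde{\varphi}$, hence $d=\deg_a\tilde{\varphi}+d_a(\varphi)$. This single identity, combined with the a priori bounds $\deg_a\tilde{\varphi}\le\deg\tilde{\varphi}$ and $d_a(\varphi)\le d-\deg\tilde{\varphi}$, forces both equalities $\deg_a\tilde{\varphi}=\deg\tilde{\varphi}$ and $d_a(\varphi)=d-\deg\tilde{\varphi}$, establishing both conclusions of the lemma.
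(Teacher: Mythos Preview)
Your argument has two intertwined circularities that make it fail as a proof of Lemma~\ref{l:poly}.

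First, you invoke Proposition~\ref{p:critical} to obtain continuity of $\cF_{\ell+1}$. That proposition requires $\{f_k\}$ to be \emph{fully ramified} at $\bA$, i.e.\ $\varphi_{n,n+1}\in\Rat_d$ for all $m\le n\le\ell$. In the inductive step of \S\ref{s:notfr} this hypothesis is unavailable: once $\ell>n_0$, some $\varphi_{n,n+1}$ may be degenerate, and in particular $a=a_{\ell-1,\ell}$ may be a hole of $\varphi=\varphi_{\ell,\ell+1}$. The continuity argument in Proposition~\ref{p:critical} uses locally uniform convergence of $A_{\ell+1,k}\circ f_k\circ A_{\ell,k}^{-1}$ near $a$, which fails precisely when $a$ is a hole. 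Worse, continuity of $\cF_{\ell+1}$ at the point $a$ is \emph{equivalent} to the conjunction of $\tilde{\varphi}(a)=a'$ and $\tilde{\varphi}_{\ell-1,\ell}(a_{\ell,\ell-1})=a_{\ell+1,\ell}$; the first is part of what you must prove here, and the second is Lemma~\ref{l:zero}, proven afterwards. So your ``byproduct'' is not a byproduct---it is an unproven assumption.

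Second, you write that the inductive hypothesis (3.$\ell$) gives $\deg_{a_{\ell,\ell-1}}\tilde{\varphi}_{\ell-1,\ell}=\deg\tilde{\varphi}$. But (3.$\ell$) only covers $m<n<\ell$; the case $n=\ell$ is exactly (3.$\ell+1$), which is Lemma~\ref{l:degree}, also proven after Lemma~\ref{l:poly}. Your second application of Proposition~\ref{p:not-fully} therefore rests on a statement not yet available. Finally, your inclusion--exclusion reduces the goal to $P=\deg\tilde{\varphi}$, but your justification of this is heuristic; since $P=\deg\tilde{\varphi}$ is equivalent to the identity $d=\deg_a\tilde{\varphi}+d_a(\varphi)$ you are trying to prove, nothing has been gained.

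The paper avoids all of this by working one level down. It takes $w_k$ near a generic point of $\oC_{\ell-1}\subset\rho_{\ell+1,\ell+1}^{-1}(a')$ and uses the already-established (5.$\ell$) for $\varphi_{\ell-1,\ell}$ (namely $d=d_{a_{\ell-2,\ell-1}}(\varphi_{\ell-1,\ell})+\deg_{a_{\ell-2,\ell-1}}\tilde{\varphi}_{\ell-1,\ell}$) together with Proposition~\ref{p:not-fully} at $n=\ell-1$, $z_0=a_{\ell-2,\ell-1}$, to force all $d$ preimages of $w_k$ into $\oC_m\cup\cdots\cup\oC_{\ell-2}\subset\rho_{\ell,\ell}^{-1}(a)$. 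Comparing with Proposition~\ref{p:not-fully} at $n=\ell$ then gives both $\tilde{\varphi}^{-1}(a')=\{a\}$ and the depth identity in one stroke, using only facts from level $\ell$.
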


 \begin{proof} 
 Let $z$ be such that $\tilde{\varphi}(z)=a'$.
 Since 
 $\rho^{-1}_{\ell+1,\ell+1}(a')$
 contains $\oC_n$ for all $n \le \ell$,
 in particular, it contains $\oC_{\ell-1}$. If $w_k \in \cB_{k}(\oC)$
 is sufficiently close to a generic point of 
 $\oC_{\ell-1}$, then it has a preimage close to $\rho^{-1}_{\ell,\ell}(z)$.
 However, all the $F_k$-preimages of $w_k$ are close to
 of $\oC_m \cup \dots \cup \oC_{\ell-2}$, by (5.$\ell$) and  Proposition~\ref{p:not-fully}. It follows that $\rho^{-1}_{\ell,\ell}(z)$ is contained in  $\oC_m \cup \dots \cup \oC_{\ell-2}$, whose $\rho_{\ell,\ell}$-image is $\{a\}$. Thus
 $z=a$ is the unique $\tilde{\varphi}_{\ell,\ell+1}$-preimage of $a'$. Moreover, since all the $F_k$-preimages of $w_k$ are close to
 $\rho^{-1}_{\ell,\ell} (a)$, by Proposition~\ref{p:not-fully}, the second assertion holds. 
\end{proof}

Now we prove the second assertion of (2.$\ell+1$):

\begin{lemma}
  \label{l:zero}
  Let $b:=a_{\ell, \ell-1}$, $b':=a_{\ell+1,\ell}$ and
  $\varphi:={\varphi}_{\ell-1,\ell}$. Then $\tilde{\varphi} (b) = b'$. 
\end{lemma}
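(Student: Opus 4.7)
The plan is to read off the identity $\tilde\varphi_{\ell-1,\ell}(b)=b'$ from the compatibility forced at the identification point $b\sim a_{\ell-1,\ell}$ of $\cS_\ell$, using Lemma~\ref{l:poly} together with the structural information in Lemma~\ref{l:concatenation}. First, I would recall that by the inductive hypothesis (1.$\ell$), the concatenation $\cS_\ell$ glues $b=a_{\ell,\ell-1}\in\oC_{\ell-1}$ to $a_{\ell-1,\ell}\in\oC_\ell$, and that by Lemma~\ref{l:concatenation} the sphere $\oC_{\ell+1}\subset\cS_{\ell+1}$ is attached to $\oC_\ell$ only at $b'=a_{\ell+1,\ell}\sim a_{\ell,\ell+1}$. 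Thus the only point of $\oC_\ell\subset\cS_{\ell+1}$ that is identified with a point of $\oC_{\ell+1}$ is $b'$, and the matching point is precisely $a_{\ell,\ell+1}$.

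Next I would invoke Proposition~\ref{p:critical} (of the Appendix) to conclude that the piecewise-defined map $\cF_\ell:\cS_\ell\to\cS_{\ell+1}$, given on $\oC_n$ by $\tilde\varphi_{n,n+1}$, is continuous. Continuity at the identification $b\sim a_{\ell-1,\ell}$ forces the two candidate values $\tilde\varphi_{\ell-1,\ell}(b)\in\oC_\ell$ and $\tilde\varphi_{\ell,\ell+1}(a_{\ell-1,\ell})\in\oC_{\ell+1}$ to represent the same element of $\cS_{\ell+1}$. By Lemma~\ref{l:poly} the latter equals $a_{\ell,\ell+1}$, and by the preceding paragraph the unique preimage of this point under the identification in $\oC_\ell$ is $b'$. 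Hence $\tilde\varphi_{\ell-1,\ell}(b)=b'$.

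If one prefers to avoid any appeal to the Appendix, the alternative is a direct preimage-counting argument in the spirit of Lemma~\ref{l:poly}. Assume for contradiction that $\tilde\varphi_{\ell-1,\ell}(b)=c\neq b'$, and choose a sequence $w_k\in\cB_k(\oC)$ clustering at a generic $w_0\in\oC_{\ell+1}$ close to $a_{\ell,\ell+1}$. Applying Proposition~\ref{p:not-fully} with $n=\ell$ and $z_0=a_{\ell-1,\ell}$, and using Lemma~\ref{l:poly} (which gives $\tilde\varphi_{\ell,\ell+1}^{-1}(a_{\ell,\ell+1})=\{a_{\ell-1,\ell}\}$ with multiplicity $\deg_{a_{\ell-1,\ell}}\tilde\varphi_{\ell,\ell+1}$), the $F_k$-preimages of $w_k$ localize near $\rho_{\ell,\ell}^{-1}(a_{\ell-1,\ell})$, hence near $b$ via the identification in $\cS_\ell$. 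On the other hand, since $a_{\ell,\ell+1}$ is identified with $b'$ in $\cS_{\ell+1}$, the same $w_k$ may be regarded as clustering near $b'\in\oC_\ell$; applying Proposition~\ref{p:not-fully} with $n=\ell-1$ at any point of $\oC_{\ell-1}$ now forces the $F_k$-preimages to localize near $\tilde\varphi_{\ell-1,\ell}^{-1}(b')\subset\oC_{\ell-1}$, which by the assumption $c\neq b'$ is bounded away from $b$. This contradiction gives the result.

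The main obstacle is reconciling the two different localizations of the same $F_k$-preimages; the reconciliation is exactly the identification $a_{\ell,\ell+1}\sim b'$ provided by Lemma~\ref{l:concatenation}, which is what converts the constraint ``the image of $b$ lies above $a_{\ell,\ell+1}$ in $\cS_{\ell+1}$'' into the precise statement $\tilde\varphi_{\ell-1,\ell}(b)=b'$.
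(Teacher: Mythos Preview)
Your first approach has a genuine gap: Proposition~\ref{p:critical} requires that $\{f_k\}$ be \emph{fully ramified} at $\bA$, i.e., that $\varphi_{n,n+1}\in\Rat_d$ for every $n\in J$. But Lemma~\ref{l:zero} lives in \S\ref{s:notfr}, the inductive step where $\ell$ is \emph{not} assumed to be a fully ramified time, so $\varphi_{\ell-1,\ell}$ and $\varphi_{\ell,\ell+1}$ may be degenerate and Proposition~\ref{p:critical} does not apply. The computation in its proof \emph{can} be adapted to this setting---one checks via (5.$\ell$) that $b$ is not a hole of $\varphi_{\ell-1,\ell}$, and via Lemma~\ref{l:poly} that a generic $z\in\oC_\ell$ is not a hole of $\varphi_{\ell,\ell+1}$---and doing so yields a valid alternative proof; but that argument has to be written out, not cited.

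Your alternative is in the spirit of the paper's proof (preimage counting via Proposition~\ref{p:not-fully}) but the bookkeeping is off. Applying Proposition~\ref{p:not-fully} at $n=\ell$ with $z_0=a_{\ell-1,\ell}$ yields the fiber $\rho_{\ell,\ell}^{-1}(a_{\ell-1,\ell})=\oC_m\cup\cdots\cup\oC_{\ell-1}$, the whole left tail of the concatenation, not a neighborhood of $b$; so your claimed localization ``near $b$'' is incorrect. And in the second application at $n=\ell-1$ you omit the preimages contributed by the hole at $a_{\ell-2,\ell-1}$. The paper's argument is more direct: assuming $\tilde\varphi(b)\neq b'$, one observes (via (1.$\ell$), (2.$\ell$) and Lemma~\ref{l:concatenation}) that $\tilde\varphi^{-1}(b')$ avoids both $b$ and $a_{\ell-2,\ell-1}$, hence consists of isolated points of $\cS_\ell$. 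Since by (5.$\ell$) the point $b$ is not a hole of $\varphi_{\ell-1,\ell}$, Proposition~\ref{p:not-fully} at $j=\ell-1$, $z_0=b$, $w_0=b'$ gives \emph{zero} $F_k$-preimages near $\rho_{\ell,\ell-1}^{-1}(b)=\oC_\ell$; this contradicts the surjectivity of $\tilde\varphi_{\ell,\ell+1}:\oC_\ell\to\oC_{\ell+1}$.
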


\begin{proof}
  By contradiction, suppose that $\tvarphi(b) \neq b'$.
  Then $\tvarphi^{-1}(b')$ consist of finitely many points in $\oC_{\ell-1}$
  bounded away from the rest of the spheres, by (1.$\ell$) and (2.$\ell$). Thus, $\rho_{\ell,\ell-1}^{-1}(z)=\{z\}$
for all $z \in \tvarphi^{-1}(b')$.
Therefore, the $F_k$-preimages of  points in $\cB_k(\oC)$ close to a
generic point of $\oC_{\ell+1}=\rho_{\ell+1,\ell}^{-1}(b')$ are close to the finite set $\tvarphi^{-1}(b') \subset \oC_{\ell-1}$ far from $\oC_\ell$. This
is impossible since $\tilde{\varphi}$ maps $\oC_{\ell}$ onto $\oC_{\ell+1}$.
\end{proof}

We also have the following result on degrees, which gives (3.$\ell+1$):
\begin{lemma}
  \label{l:degree}
 Let $b:=a_{\ell, \ell-1}$. Then $\deg \tilde{\varphi}_{\ell-1,\ell} \ge \deg_b \tilde{\varphi}_{\ell-1,\ell} = \deg \tvarphi_{\ell,\ell+1}$.
\end{lemma}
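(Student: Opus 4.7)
The inequality $\deg \tilde{\varphi}_{\ell-1,\ell} \geq \deg_b \tilde{\varphi}_{\ell-1,\ell}$ is immediate from the general fact that local degree is bounded by total degree. The substantive content is the equality $\deg_b \tilde{\varphi}_{\ell-1,\ell} = \deg \tilde{\varphi}_{\ell,\ell+1}$, and my plan is to derive it from two complementary applications of Proposition~\ref{p:not-fully}.

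Set $k := \deg \tilde{\varphi}_{\ell,\ell+1}$ and pick a generic $\tilde{w}_0 \in \oC_{\ell+1}$. Let $w_k \in \cB_k(\oC)$ converge to $\tilde{w}_0$. By Lemma~\ref{l:poly}, the point $a_{\ell-1,\ell}$ is the unique $\tilde{\varphi}_{\ell,\ell+1}$-preimage of $a_{\ell,\ell+1}$, with local degree $k$ and $d_{a_{\ell-1,\ell}}(\varphi_{\ell,\ell+1}) = d-k$. Applying Proposition~\ref{p:not-fully} at $(n,z_0) = (\ell, a_{\ell-1,\ell})$ with $w_0 = \tilde{w}_0$, since $\tilde{\varphi}_{\ell,\ell+1}(a_{\ell-1,\ell}) = a_{\ell,\ell+1} \neq \tilde{w}_0$, the number of preimages of $w_k$ close to $\rho^{-1}_{\ell,\ell}(a_{\ell-1,\ell}) = \{a_{\ell-1,\ell}\} \cup \oC_{\ell-1} \cup \cdots \cup \oC_m$ equals $\delta_1 = d-k$.

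Next, I apply the proposition at $(n, z_0, w_0) = (\ell-1, b, b')$, where $b' := a_{\ell+1,\ell} = \rho_{\ell+1,\ell}(\tilde{w}_0) \in \oC_\ell$ is the retraction of $\tilde{w}_0$ to $\oC_\ell$. The inductive hypothesis $(5.\ell)$ at $n = \ell-1$ reads $d = d_{a_{\ell-2,\ell-1}}(\varphi_{\ell-1,\ell}) + \deg_{a_{\ell-2,\ell-1}} \tilde{\varphi}_{\ell-1,\ell}$; combined with the trivial bound $\deg_{a_{\ell-2,\ell-1}} \tilde{\varphi}_{\ell-1,\ell} \leq \deg \tilde{\varphi}_{\ell-1,\ell}$ and the identity that the total depth of $\varphi_{\ell-1,\ell}$ equals $d - \deg \tilde{\varphi}_{\ell-1,\ell}$, this forces $a_{\ell-2,\ell-1}$ to be the unique hole of $\varphi_{\ell-1,\ell}$, carrying the full depth, so in particular $d_b(\varphi_{\ell-1,\ell}) = 0$. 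Combined with Lemma~\ref{l:zero}, which gives $\tilde{\varphi}_{\ell-1,\ell}(b) = b'$, the count of preimages of $w_k$ close to $\rho^{-1}_{\ell,\ell-1}(b) = \{b\} \cup \oC_\ell$ equals $\delta_2 = \deg_b \tilde{\varphi}_{\ell-1,\ell}$.

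The two subsets $\rho^{-1}_{\ell,\ell}(a_{\ell-1,\ell})$ and $\rho^{-1}_{\ell,\ell-1}(b)$ of $\cS_\ell$ together cover $\cS_\ell$, meeting only at the junction point $b = a_{\ell-1,\ell}$. The Hausdorff-limit description of $\cA_k(\oC)$ developed in the Appendix ensures that the two preimage counts partition the $d$ total preimages of $w_k$ without overlap, so that
\[
\delta_1 + \delta_2 = (d-k) + \deg_b \tilde{\varphi}_{\ell-1,\ell} = d,
\]
yielding $\deg_b \tilde{\varphi}_{\ell-1,\ell} = k$. The most delicate step will be this final partition argument: one must rigorously verify that no preimage is double-counted between the two regions despite the shared junction, and in particular that no portion of the $d-k$ preimages absorbed by the depth of $\varphi_{\ell,\ell+1}$ at $a_{\ell-1,\ell}$ also contributes to the $\delta_2$ count at $b$. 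This bookkeeping should be supplied by the preimage-distribution machinery in the Appendix.
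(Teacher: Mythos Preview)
Your proof is correct and runs parallel to the paper's, with one packaging difference worth noting. Both arguments apply Proposition~\ref{p:not-fully} at $(\ell-1,b,b')$ to obtain $\delta_2=\deg_b\tilde\varphi_{\ell-1,\ell}$ preimages close to $\rho_{\ell,\ell-1}^{-1}(b)=\oC_\ell$. The paper then finishes in one line: since $\tilde\varphi_{\ell,\ell+1}:\oC_\ell\to\oC_{\ell+1}$ has degree $\deg\tilde\varphi_{\ell,\ell+1}$, a generic $w_0\in\oC_{\ell+1}$ has exactly that many preimages in $\oC_\ell$, so the two counts of preimages near $\oC_\ell$ agree. Your route instead computes the complementary count $\delta_1=d-\deg\tilde\varphi_{\ell,\ell+1}$ via a second application of Proposition~\ref{p:not-fully} at $(\ell,a_{\ell-1,\ell})$ and then argues $\delta_1+\delta_2=d$.

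The two are logically equivalent, but the paper's phrasing sidesteps the partition bookkeeping you flag as delicate: it compares two counts over the \emph{same} region rather than asserting that two overlapping neighborhoods in $\oC^J$ split the $d$ preimages without double counting. Your concern about overlap is legitimate---the neighborhoods $\rho_\ell^{-1}(U_1')$ and $\rho_{\ell-1}^{-1}(U_2')$ do not literally partition $\cA_k(\oC)$---but it dissolves once you note that in $\oC_\ell$-coordinates the $\delta_2$ region is, for large $k$, exactly the complement of an arbitrarily small disk about $a_{\ell-1,\ell}$ (since $A_{\ell-1,k}\circ A_{\ell,k}^{-1}\to b$ off $a_{\ell-1,\ell}$), and Lemma~\ref{lem:pre-number} already localizes all $d$ preimages to disjoint small disks about $a_{\ell-1,\ell}$ and the $\tilde\varphi_{\ell,\ell+1}$-preimages of $w_0$. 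So your proof goes through; the paper's version is just a little more economical.
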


\begin{proof}
  Let $b':=a_{\ell+1,\ell}$. Note that $b$ is not a hole
  of $\tvarphi_{\ell-1,\ell}$ by (5.$\ell$).
  Let $w_0$ be a generic point of $\oC_{\ell+1}=\rho^{-1}_{\ell+1,\ell}(b')$.
  There are exactly  $\deg_b \tvarphi_{\ell-1,\ell}$ preimages, under $F_k$, 
  of every point in $\cB_k(\oC)$ close to $w_0$, which are close
  to $\oC_\ell = \rho^{-1}_{\ell,\ell-1}(b)$.
  But there are $\deg \tvarphi_{\ell,\ell+1}$ preimages of $w_0$ in $\oC_\ell$,
  so $\deg \tvarphi_{\ell,\ell+1}= \deg_b \tvarphi_{\ell-1,\ell} \le \deg \varphi_{\ell-1,\ell}.$
\end{proof}

We now conclude the induction by certifying that (0.$\ell+1$)--(5.$\ell+1$)
follow from the above lemmas.
Indeed, (0.$\ell+1$) is a consequence of Lemma~\ref{l:indep}, (1.$\ell+1$) is
Lemma~\ref{l:concatenation}.
Lemma~\ref{l:poly} implies the first assertion of (2.$\ell+1$) together with (5.$\ell+1$).
The second assertion of (2.$\ell+1$) is obtained
from Lemma~\ref{l:zero}. Statement (3.$\ell+1$)
is contained in Lemma~\ref{l:degree}.
Suppose that $n_0+1=\ell$ is not a fully ramified
time. Then (4.$\ell+1$) reduces to (4.$n_0$), which
we have already proven (Lemma~\ref{l:order} (2)).
This finishes the proof of Theorem~\ref{p:structure}.

\subsection{Measures and depths}
Consider a sequence $\{f_k\}$ with a collection of post-scaling $\{A_{n,k}\}$
for which the Structure Theorem~\ref{p:structure} applies.
Proposition~\ref{p:julia} says that the Julia set $\cJ(f_k)$, viewed
in $\oC_n$ for $n>m$, converges to $a_{n-1,n}$.
Measure-theoretically we have the following consequence:

\begin{corollary}\label{coro:main}
  Under the assumption and notation of Theorem \ref{p:structure},
  for any $n >m$, let $A_n:=\lim A_{n,k}$ and $a_n:= \tilde{A}_n$.
  Then the following hold:
        \begin{enumerate}
       \item  $a_n=a_{n-1,n}$.
       \item $A_{n,k} (\cJ(f_k))$ converges in the Hausdorff topology on compact sets to $\{a_n\}$. In particular, 
         $$(A_{n,k})_* \mu_{f_k} \to \delta_{a_n}.$$
              \end{enumerate}
\end{corollary}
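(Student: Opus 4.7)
My plan for the proof is as follows.

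For statement~(1), I decompose $A_{n,k}=(A_{n,k}\circ A_{n-1,k}^{-1})\circ A_{n-1,k}$. Independence of the post-scalings $\{A_{n,k}:n\ge m\}$, provided by Theorem~\ref{p:structure}, gives $A_{n,k}\circ A_{n-1,k}^{-1}\to A_{(n-1,n)}\in\partial\Rat_1$, a degenerate M\"obius map with constant reduction $a_{n-1,n}$ and unique hole $a_{n,n-1}$. Thus on any compact subset of $\oC\setminus\{a_{n,n-1}\}$, the factor $A_{n,k}\circ A_{n-1,k}^{-1}$ converges uniformly to the constant $a_{n-1,n}$. Since the set of $z$ for which $A_{n-1,k}(z)\to a_{n,n-1}$ is at most a single point in the limit (using the inductive hypothesis $\tilde A_{n-1}=a_{n-2,n-1}\neq a_{n,n-1}$, guaranteed by statement~(1) of Theorem~\ref{p:structure}), for a dense set of $z$ we obtain $A_{n,k}(z)\to a_{n-1,n}$. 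Hence $A_n\in\partial\Rat_1$ with $\tilde A_n=a_{n-1,n}$, i.e., $a_n=a_{n-1,n}$. The base case $n=m+1$ is handled analogously by invoking Lemma~\ref{lem:s} applied to $\{f_k^{\circ m}\}$ to control $\tilde A_m$ and verify that it does not coincide with $a_{m+1,m}$.

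For statement~(2), I first treat $n=m+1$ by applying Proposition~\ref{p:julia} with $h_k=f_k$, $B_{0,k}=A_{m,k}$, $B_{1,k}=A_{m+1,k}$, and $\varphi=\varphi_{m,m+1}$. The hypotheses are verified from Theorem~\ref{p:structure}: independence of the pair is granted; $\varphi\in\Rat_d$ since $m$ is a fully ramified time; $\deg_a\varphi=d$ at $a=a_{m+1,m}=0$ follows from statement~(3) of Theorem~\ref{p:structure} applied at $n=m+1$, since $\deg\tvarphi_{m+1,m+2}=d$ because $m+1$ is fully ramified; and $\varphi(a)=a_{m+2,m+1}\neq a_{m,m+1}=b$ by continuity of $\cF$ together with the linear identification of spheres (statements~(1)-(2) of Theorem~\ref{p:structure}). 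Proposition~\ref{p:julia}(i) then yields that $A_{m+1,k}(\cJ(f_k))$ lies in any prescribed neighborhood of $a_{m,m+1}$ for $k$ large, i.e., Hausdorff convergence to $\{a_{m+1}\}=\{a_{m,m+1}\}$.

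For $n>m+1$, I bootstrap via $A_{n,k}=(A_{n,k}\circ A_{m+1,k}^{-1})\circ A_{m+1,k}$. In the linear concatenation $\cS$, $\oC_{m+1}$ lies on the $\oC_{n-1}$-side of $\oC_n$, so the retractions give $a_{m+1,n}=a_{n-1,n}$, and similarly $a_{n,m+1}=a_{m+2,m+1}$ when $n>m+1$. Therefore $A_{n,k}\circ A_{m+1,k}^{-1}$ converges to the constant $a_{n-1,n}$ uniformly off any neighborhood of $a_{m+2,m+1}$. Since statement~(1) of Theorem~\ref{p:structure} gives $a_{m,m+1}\neq a_{m+2,m+1}$, the base case places $A_{m+1,k}(\cJ(f_k))$, for $k$ large, inside a fixed compact neighborhood of $a_{m,m+1}$ disjoint from a neighborhood of $a_{m+2,m+1}$. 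Composing yields $A_{n,k}(\cJ(f_k))\to\{a_{n-1,n}\}=\{a_n\}$ in the Hausdorff topology. Weak*-convergence $(A_{n,k})_\ast\mu_{f_k}\to\delta_{a_n}$ is then immediate, since each $(A_{n,k})_\ast\mu_{f_k}$ is a probability measure supported on the shrinking set $A_{n,k}(\cJ(f_k))$.

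The main obstacle I anticipate is the book-keeping for statement~(1), particularly for the base case $n=m+1$ where no prior instance of~(1) is available to control $\tilde A_m$; I expect to handle this by direct invocation of Lemma~\ref{lem:s} applied to $\{f_k^{\circ m}\}$, with $A_{m,k}\circ f_k^{\circ m}\to\varphi_m$, to identify $\tilde A_m$ and rule out the degenerate coincidence $\tilde A_m=a_{m+1,m}$.
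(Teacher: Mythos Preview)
Your treatment of statement~(2) is correct and matches the paper's (very terse) argument: apply Proposition~\ref{p:julia} once to locate $\cJ(f_k)$ near $a_{m,m+1}$ in $\oC_{m+1}$, then push forward through the degenerate changes of coordinates $A_{n,k}\circ A_{m+1,k}^{-1}$.

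For statement~(1), however, your inductive scheme has a genuine gap at the base case $n=m+1$. You propose to invoke Lemma~\ref{lem:s} on $\{f_k^{\circ m}\}$ to identify $\tilde A_m$ and then rule out $\tilde A_m=a_{m+1,m}$. Two problems: first, Lemma~\ref{lem:s} requires $f_k^{\circ m}\to g_m$ with \emph{constant} reduction, and nothing in the hypotheses of Theorem~\ref{p:structure} guarantees either convergence of $f_k^{\circ m}$ or constancy of the limiting reduction. Second, and more seriously, the condition $\tilde A_m\neq a_{m+1,m}$ is not an auxiliary fact you can verify independently: writing $A_{m,k}=(A_{m,k}\circ A_{m+1,k}^{-1})\circ A_{m+1,k}$, the same decomposition run in reverse shows that (along any subsequence) $\tilde A_m=a_{m+1,m}$ exactly when $a_{m+1}\neq a_{m,m+1}$. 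So the coincidence you need to exclude is equivalent to the conclusion you want, and Lemma~\ref{lem:s} gives you no leverage against it.

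The paper avoids the base-case issue by not inducting at all. For each $n>m$ it computes $A_{n+m,k}\circ f_k^{\circ m}$ in two ways. Writing it as $T_k\circ A_{n,k}$ with $T_k:=A_{n+m,k}\circ f_k^{\circ m}\circ A_{n,k}^{-1}$, the Structure Theorem identifies $\lim T_k=\varphi$ as a polynomial with unique hole at $a_{n-1,n}=\infty$; if $a_n\neq a_{n-1,n}$ this forces a generic limit $\tilde\varphi(a_n)\neq\infty$. On the other hand, factoring as $(A_{n+m,k}\circ A_{m,k}^{-1})\circ(A_{m,k}\circ f_k^{\circ m})$ and using that $A_{m,k}\circ f_k^{\circ m}\to\varphi_m$ has \emph{non-constant} reduction, the generic limit is $a_{m,m+n}=\infty$. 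The contradiction pins down $a_n=a_{n-1,n}$. The key point is that the paper uses the guaranteed non-constancy of $\tilde\varphi_m$ (a hypothesis) in place of any information about $\tilde A_m$ (which is not available).
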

\begin{proof}
  Consider $n > m$. By Theorem~\ref{p:structure},
  $$T_k:=A_{n+m,k} \circ f_k^{\circ m} \circ A_{n,k}^{-1}$$
  converges to a map $\varphi \in \Rat^*_{d^m}$ with a unique hole at
  $a_{n-1,n}=\infty$. If $a_n \neq a_{n-1,n}$, then
  $A_{n+m,k} \circ f_k^{\circ m} = T_k  \circ A_{n,k}$
  converges to $\tilde{\varphi}(a_n)$ for generic $z \in \oC$.
  By Theorem~\ref{p:structure} (4), the unique preimage of $a_{m,n+m}=\infty$ under the polynomial $\tilde{\varphi}$ is
  $a_{n-1,n}=\infty$. Thus, $\tilde{\varphi} (a_n) \neq \infty$.
  However, generically $A_{m,k}\circ f_k^{\circ m}$ converges to a non-constant
  map $\tilde{\varphi}_m$.  We conclude that, generically,  
$$(A_{m+n,k}\circ A_{m,k}^{-1})\circ (A_{m,k}\circ f_k^{\circ m})(z)\to a_{m,m+n}=\infty.$$
This is a contradiction. Hence statement (1) holds. 

For statement (2), consider a small neighborhood $W$ of $a_n$. By statement (1), we have that $a_n=a_{n-1,n}$ and, by Proposition \ref{p:julia} and Theorem \ref{p:structure}, we conclude that $W$ contains
  $A_{n,k} (\cJ(f_k))$ for all  sufficiently large $k$. The conclusion follows since  $A_{n,k} (\cJ(f_k))$ is the support of $(A_{n,k})_* \mu_{f_k}$.
\end{proof}

Fully Ramified Sequences have well-behaved limits in $\wRatd$: 
\begin{corollary}\label{coro:main1}
Under the assumption and notation in Theorem \ref{p:structure}, if $f_k^{\circ n}$ converges to $g_n \in  \oRat_{d^n}$ for all $n$ (i.e. $\{ f_k \}$ converges  to $(g_n)$ in $\wRatd$), then for any given $n>m$  the following hold:
\begin{enumerate}
\item $\eta_{g_n} = \eta_{g_{m+1}}$; 
\item $\tilde g_n=\tilde g_{m+1}$ is a constant in $\oC$; and
\item for any $z\in\oC$, 
         $$\frac{d_z(g_n)}{d^n}=\frac{d_z(g_{m+1})}{d^{m+1}}.$$
    \end{enumerate}  
\end{corollary}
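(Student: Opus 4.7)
The plan is to realize each $g_n$, for $n>m$, as a degenerate composition of a limit M\"obius map with $\varphi_n$, and then to read off (1)--(3) from the composition formula of \S\ref{s:composition} combined with a single application of Proposition~\ref{prop:mea-CE}.

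\textit{Identification of $g_n$ and constancy of $\tilde g_n$.} In the coordinates furnished by Theorem~\ref{p:structure}, for $n>m$ we have $A_{n,k}\to A_n\in\partial\Rat_1$ with $\tilde A_n\equiv a_n=\infty$ (Corollary~\ref{coro:main}(1)) and a unique hole $b_n\in\oC$; correspondingly $A_{n,k}^{-1}\to A_n^{-1}$, where $A_n^{-1}$ has constant reduction $b_n$ and a single hole at $\infty$ of depth one. Since $\tilde\varphi_n$ is non-constant, the pair $(A_n^{-1},\varphi_n)$ avoids the indeterminacy locus of composition, so taking limits in $f_k^{\circ n}=A_{n,k}^{-1}\circ(A_{n,k}\circ f_k^{\circ n})$ yields $g_n=A_n^{-1}\circ\varphi_n$. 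The composition formula for reductions then gives $\tilde g_n\equiv b_n$, a constant.

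\textit{Independence of $b_n$.} Write $A_{n+1,k}=(A_{n+1,k}\circ A_{n,k}^{-1})\circ A_{n,k}$. The first factor converges to $A_{(n,n+1)}$, which in the Structure Theorem coordinates has $\tilde A_{(n,n+1)}\equiv a_{n,n+1}=\infty$ and unique hole $a_{n+1,n}=0$, while $\tilde A_n\equiv\infty\neq 0$; hence the pair lies outside the indeterminacy locus, and the depth composition formula \eqref{eq:depth-composition} gives
\[
\hole(A_{n+1})=\tilde A_n^{-1}(\{0\})\cup\{b_n\}=\emptyset\cup\{b_n\}=\{b_n\},
\]
so $b_{n+1}=b_n$. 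Iterating yields $b_n=b_{m+1}$ for every $n>m$, which is statement (2).

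\textit{Depth measures and conclusion.} Applying \eqref{eq:depth-composition} to $g_n=A_n^{-1}\circ\varphi_n$, and using that $A_n^{-1}$ has only the hole $\infty$ of depth one, one obtains $d_h(g_n)=d_h(\varphi_n)+\deg_h(\tilde\varphi_n)\cdot[\tilde\varphi_n(h)=\infty]$; summing over $h$ produces the key identity
\[
\eta_{g_n}=\tilde\varphi_n^{*}\delta_\infty+\eta_{\varphi_n}=\varphi_n^{*}\delta_\infty.
\]
Passing to a subsequence so that $\mu_{f_k}\to\mu$, Corollary~\ref{coro:main}(2) identifies the $\{A_{n,k}\}$-weak limit of $\{\mu_{f_k^{\circ n}}\}=\{\mu_{f_k}\}$ as $(\mu,\delta_\infty)$, and Proposition~\ref{prop:mea-CE} applied to $\{f_k^{\circ n}\}\subset\Rat_{d^n}$ yields $\mu=d^{-n}\varphi_n^{*}\delta_\infty=d^{-n}\eta_{g_n}$. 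Since the right-hand side is independent of $n>m$, statement (3) follows at once, and statement (1) is its unnormalized formulation.

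The main obstacle is really conceptual: finding the right algebraic decomposition $g_n=A_n^{-1}\circ\varphi_n$, which combines the Structure Theorem's description of the limiting M\"obius scalings $A_n$ with the non-degenerate iterate limits $\varphi_n$, and verifying that the various limits commute with composition (i.e., avoid the indeterminacy loci); the clean coordinate picture of Theorem~\ref{p:structure}, with reductions and holes concentrated at $\infty$ and $0$, makes each indeterminacy check immediate. Once these decompositions are in place, everything reduces to unwinding \eqref{eq:depth-composition} and a single invocation of Proposition~\ref{prop:mea-CE}.
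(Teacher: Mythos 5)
Your argument is correct, and it reaches statement (3) by a genuinely different route than the paper. For statement (2) you and the paper do essentially the same thing: the paper tracks the hole $h$ of $A_n=\lim A_{n,k}$ through $A_{n+1,k}^{-1}=A_{n,k}^{-1}\circ(A_{n,k}\circ A_{n+1,k}^{-1})$, while you recast this as $A_{n+1}=A_{(n,n+1)}\circ A_n$ and read off $\hole(A_{n+1})=\{b_n\}$ from the composition rules of \S\ref{s:composition}; your explicit decomposition $g_n=A_n^{-1}\circ\varphi_n$ also makes the constancy of $\tilde g_n$ a little more transparent than the paper's terse appeal to the hole of $A_n$. For statement (3), however, the paper stays entirely at the level of algebra of degenerate maps: it uses Lemma~\ref{lem:s} to express $d_z(g_n)$ through $d_z(\varphi_n)$ and then the composition formula \eqref{eq:depth-composition} for $\varphi_{n+1}=\varphi_{n,n+1}\circ\varphi_n$, together with the Structure Theorem's description of $\varphi_{n,n+1}$ (unique hole at $\infty$, local degree $\deg\tvarphi_{n,n+1}$ there), to prove the pointwise recursion $d_z(g_{n+1})=d\cdot d_z(g_n)$; no measures appear. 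You instead prove $\eta_{g_n}=\varphi_n^{*}\delta_\infty$ (which is Lemma~\ref{lem:s} re-derived from your decomposition) and then combine Corollary~\ref{coro:main}(2) with Proposition~\ref{prop:mea-CE} applied to $\{f_k^{\circ n}\}$ to identify $d^{-n}\eta_{g_n}$ with $\lim_k\mu_{f_k}$, which is visibly independent of $n$; this is in effect the computation the paper postpones to Proposition~\ref{p:ramified}, so there is no circularity, but it does import the maximal-entropy measures, the fact $\mu_{f_k^{\circ n}}=\mu_{f_k}$, and a subsequence along which $\mu_{f_k}$ converges, none of which the paper's purely combinatorial proof needs, while your route avoids the two-case depth bookkeeping. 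Two small remarks: like the paper, you take for granted that $A_{n,k}$ converges to a degenerate limit (this is built into the statement of Corollary~\ref{coro:main}, and passing to a subsequence would be harmless since $g_n$ and $\tilde g_n$ do not depend on it); and statement (1) as literally written cannot be the ``unnormalized formulation'' you call it, since $\eta_{g_n}(\oC)=d^n$ varies with $n$ --- it is the normalized equality $d^{-n}\eta_{g_n}=d^{-(m+1)}\eta_{g_{m+1}}$, i.e.\ a restatement of (3), which is also how the paper treats it.
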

\begin{proof}
  Consider $n > m$.
  Recall that, in convenient coordinates, we put $a_{n-1,n}$ at $\infty$.
  Let us first prove statement (2).
  The hole $h$ of $A_n = \lim A_{n,k}$ is $\tilde g_n$.
  By Corollary \ref{coro:main} (1), generically, $A_{n,k}$ converges to $a_n = a_{n-1,n}=\infty \in \oC_n$.
  Off $a_n$, we have that $A_{n,k}^{-1}$ converges to $h$.
  Since  $A_n \circ A_{n+1}^{-1}$ converges generically to $a_{n+1,n} \neq a_n$,
  it follows that
  $A_{n+1,k}^{-1} = A_{n,k}^{-1} \circ (A_n \circ A_{n+1}^{-1})$
  also converges to $h$. Thus the hole of $A_{n+1}$ is also $h$ and
  $\tilde{g}_{n+1} =h$.

  We now show  statement (3), which immediately implies statement (1). It is sufficient to
show that $d_z(g_{n+1})=d_z(g_n)\cdot d$ for all $z \in \oC$.  By Lemma~\ref{lem:s}, if $\tvarphi_{m+1} (z) \neq a_{m, m+1}$, then
 $d_z (g_n) = d_z (\varphi_n)$; and 
if otherwise, $d_z (g_n) = d_z (\varphi_n)+ \deg_z \tvarphi_n$. To compute $d_z(g_{n+1})$, we consider two cases according to whether $\tvarphi_{m+1} (z) = a_{m,m+1}$ or not, equivalently, $\tvarphi_n (z) = a_{n-1,n} =\infty \in \oC_n$ or not since all transition maps are polynomials (Theorem ~\ref{p:structure} (4)).

    Suppose that $\tvarphi_{m+1} (z) \neq a_{m,m+1}$.
  That is, $\tvarphi_n(z) \neq a_{n-1,n}=a_n$.
  Then $\tvarphi_n(z)$ is not a hole of $\varphi_{n,n+1}$ and $\tvarphi_{n+1} (z) \neq a_{n+1}$. Therefore, using
  the formula \eqref{eq:depth-composition},  
  we obtain that 
  $$d_z(g_{n+1}) = d_z (\varphi_{n+1}) = d_z (\varphi_{n}) \cdot d = d_z(g_n) \cdot d.$$

  Suppose that $\tvarphi_{m+1} (z) = a_{m,m+1}$.
  That is, $\varphi_n(z) = a_{n-1,n}=a_n$. Then $\varphi_n(z)$ is the hole
  of $\varphi_{n,n+1}$, which has depth $d - \deg \tvarphi_{n,n+1}$
  and the local degree at $a_n$ is $\deg \tvarphi_{n,n+1}$. It follows that 
  \begin{eqnarray*}
    d_z(g_{n+1}) & = & d_z (\varphi_{n+1}) + \deg_z \tvarphi_{n+1}\\
                 &=& d_z(\varphi_n) \cdot d + \deg_z \tvarphi_{n} \cdot d_{a_n} (\varphi_{n,n+1}) +  \deg_z \tvarphi_{n} \cdot \deg_{a_n} \tvarphi_{n,n+1}\\
                 &=& d_z(\varphi_n) \cdot d + \deg_z \tvarphi_{n} \cdot d\\
    &=& d_z(g_{n+1}) \cdot d.
  \end{eqnarray*}

\end{proof}

\section{Iterations and measures for degenerate maps}
\label{s:Aproof}

In this section, we prove Theorem \ref{A}.
Given $\mathbf{g} = (g_n) \in \wRatd$, we aim at proving
the existence of a probability measure $\mu_{\mathbf{g}}$, continuously depending
on $\mathbf{g}$, such that 
$$\dfrac{1}{d^n}(g_n)^* \mu \to \mu_{\mathbf{g}},$$
for a generic probability measure $\mu$.
To this end, we consider
a sequence $\{f_k\}\subset \Ratd$ converging to $\mathbf{g}$.
First we make some assumptions on the sequence $\{f_k\}$ and
prove Theorem~\ref{A} under these assumptions. Later, using compactness of $\wRatd$
and $M^1(\oC)$, we deduce Theorem~\ref{A} in full generality.

We may simultaneously consider the limits of $f_k^{\circ n}$ and of $\llbracket f_k^{\circ n} ]$; to record these limits, let us introduce the following definition: 
\begin{definition}\label{def:total}
A sequence $\{f_k\}\subset\Ratd$ is  \emph{totally convergent} if 
for any $n\ge 1$, there exist $g_n \in  \oRat_{d^n}$  and $\varphi_n\in\Rat_{d^n}^*$  such that the following hold:
\begin{enumerate}
\item $f_k^{\circ n}$ converges to $g_n \in  \oRat_{d^n}$ (i.e. $\{ f_k \}$ converges  to $(g_n)$ in $\wRatd$), and 
\item $\llbracket f_k^{\circ n} ]$ converges to $\llbracket \varphi_n ]\in\llbracket \Rat_{d^n}^*]$.
\end{enumerate}
We say that $\{f_k\}$ \emph{totally converges to} $(g_n,\varphi_n)$.
\end{definition}

To prove Theorem~\ref{A}, we assume that  $\{f_k\} \subset \Ratd$   totally converges to $(g_n, \varphi_n)$, as $k \to \infty$ (see Definition \ref{def:total}).
We also assume that the conjugacy classes $\{ [f_k] \} \subset \oratd$
converge in moduli space to some $[g] \in \oratd$.
Then  we consider the following cases:
\begin{enumerate}
\item $\deg \tilde{\varphi}_n = o(d^n)$.
\item $\deg \tilde{\varphi}_n \neq o(d^n)$ and
  \begin{enumerate}
  \item $[g] \in \ratd$ or,
  \item $[g] \in \partial \ratd$.
  \end{enumerate}
\end{enumerate}
In each case, we show that a measure $\mu_{\mathbf{g}}$ with the desired properties  exists and that $\mu_{f_k}$ converges to 
$\mu_{\mathbf{g}}$. After discussing some preliminary facts in \S \ref{s:depthcon},
 case (1) is considered in \S \ref{s:small}, case (2a) in \S \ref{s:potential}, and case (2b) in
\S \ref{s:ramified}. Finally, in \S \ref{s:A}, we assemble 
these results to produce a proof of Theorem~\ref{A}.

\subsection{Convergence of depth measures}
\label{s:depthcon}

  Recall from Definition \ref{def:depthmeasure} that the depth measure of a map $g \in \partial\Ratd$ is
  $$\eta_g := \sum_{h \in \hole(g)} d_h(g) \delta_h.$$
  
\begin{lemma}\label{lem:depthmeasure}
  Let  $\{f_k\} \subset \Ratd$ be a sequence such that $\llbracket f_k^{\circ n} ]\to\llbracket\varphi_n ]$ in $\llbracket \Rat_{d^n}^*]$ for each $n\ge 1$, as $k \to \infty$.
Then there exists a purely atomic measure $\eta$ such that, as $n\to\infty$, 
$$\frac{1}{d^n} \eta_{\varphi_n}\to \eta.$$
\end{lemma}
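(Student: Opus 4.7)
The plan is to view $\mu_n := \frac{1}{d^n}\eta_{\varphi_n}$ as a sequence of finite atomic measures that is pointwise monotone nondecreasing in $n$, with total mass uniformly bounded by $1$, and then to extract the limit $\eta$ by pointwise (and hence weak-$*$) convergence at each atom.

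First I would invoke Lemma \ref{l:decomposition} to obtain the factorization $\varphi_{n+1} = \varphi_{n,n+1}\circ\varphi_n$ with $\varphi_{n,n+1}\in\Rat_d^*$. Since $\tilde\varphi_n$ is non-constant (as $\varphi_n\in\Rat_{d^n}^*$), the pair $(\varphi_{n,n+1},\varphi_n)$ lies outside the indeterminacy locus $I(d,d^n)$, so the depth-composition formula \eqref{eq:depth-composition} applies with outer degree $d$: for every $h\in\oC$,
\[
d_h(\varphi_{n+1}) \;=\; d\cdot d_h(\varphi_n) \;+\; \deg_h(\tilde\varphi_n)\cdot d_{\tilde\varphi_n(h)}(\varphi_{n,n+1}).
\]
Dividing by $d^{n+1}$ yields
\[
\mu_{n+1}(\{h\}) \;=\; \mu_n(\{h\}) \;+\; \frac{\deg_h(\tilde\varphi_n)\cdot d_{\tilde\varphi_n(h)}(\varphi_{n,n+1})}{d^{n+1}} \;\ge\; \mu_n(\{h\}),
\]
which is the sought pointwise monotonicity. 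Moreover, $\mu_n(\oC) = (d^n - \deg\tilde\varphi_n)/d^n \le 1$, giving the uniform bound on total mass.

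Consequently, for each $h\in\oC$ the value $\mu_n(\{h\})$ converges monotonically to some $c_h\in[0,1]$, and by monotone convergence $\sum_h c_h = \lim_n \mu_n(\oC)\le 1$. Setting $\eta:=\sum_h c_h\delta_h$ produces a purely atomic finite measure. To upgrade pointwise convergence at atoms to weak-$*$ convergence, fix a continuous $\phi:\oC\to\R$: since $\mu_n(\{h\})\le c_h$ for all $n$ and $\sum_h c_h<\infty$, dominated convergence gives
\[
\int\phi\,d\mu_n \;=\; \sum_h \phi(h)\,\mu_n(\{h\}) \;\longrightarrow\; \sum_h \phi(h)\,c_h \;=\; \int\phi\,d\eta.
\]
No substantive obstacle arises: the only point is to notice that Lemma \ref{l:decomposition} together with \eqref{eq:depth-composition} immediately delivers the pointwise monotonicity, after which the rest is elementary measure theory on a compact metric space.
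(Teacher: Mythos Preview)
Your proof is correct and follows essentially the same route as the paper's: both invoke Lemma~\ref{l:decomposition} and the depth formula~\eqref{eq:depth-composition} to obtain pointwise monotonicity of $d_h(\varphi_n)/d^n$, then define $\eta$ as the pointwise limit. You supply more detail than the paper does on the final passage from pointwise convergence of masses to weak-$*$ convergence (the paper simply asserts ``It follows that $\eta_{\varphi_n}/d^n \to \eta$''), which is a harmless and welcome addition.
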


\begin{proof}
Apply  Lemma \ref{l:decomposition} to write $\varphi_{n+1} = \varphi_{n,n+1} \circ  \varphi_n$ where $\deg \tilde{\varphi}_{n,n+1} \ge 1$. 
  For all $h \in \oC$, in view of \eqref{eq:depth-composition}, we have 
   $$d_h (\varphi_{n+1}) = d_h (\varphi_n) \cdot d + \deg_{h} \tilde\varphi_n \cdot d_{\tilde\varphi_n(h)}(\varphi_{n,n+1}).$$
  Division by $d^{n+1}$ yields that
  $$\frac{\eta_{\varphi_n} (\{h\})}{d^n} = \dfrac{d_h(\varphi_n)}{d^n}$$
  is non-decreasing.
  Let
  $\eta$ be the purely atomic measure defined by
  $$\eta(\{h\}) := \lim_n\frac{\eta_{\varphi_n} (\{h\})}{d^n}.$$
  It follows that $\eta_{\varphi_n}/d^n \to \eta$.
\end{proof}

\subsection{Small degree growth}
\label{s:small}
Let $\{ f_k\}\subset\Ratd$ be a sequence totally converging
to $(g_n,\varphi_n)$. The growth of $\deg \tilde{\varphi}_n$ plays a crucial in our argument. In the case of small degree growth, that is, $\deg \tilde{\varphi}_n = o(d^n)$, we have the following convergence of measures.

\begin{proposition}\label{p:smalldegree}
  Consider a sequence
  $\{f_k\} \subset \Ratd$   totally converging to $(g_n, \varphi_n)$. 
  Suppose $\mu$ is a non-exceptional probability measure for $(g_n)$.
  Let $\eta := \lim (\eta_{\varphi_n}/d^n)$.
  Assume that
  $$\deg \tilde{\varphi}_n = o(d^n).$$
  Then
$$ \eta = \lim_{n\to\infty} \dfrac{1}{d^n} (g_n)^* \mu = \lim_{k\to\infty}  \mu_{f_k}.$$ 
\end{proposition}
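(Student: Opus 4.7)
The plan is to establish the triple equality by showing that each of $\eta_{g_n}/d^n$, $\frac{1}{d^n}(g_n)^*\mu$, and $\mu_{f_k}$ converges weakly to the purely atomic measure $\eta$ furnished by Lemma~\ref{lem:depthmeasure}. The smallness hypothesis $\deg \tilde\varphi_n = o(d^n)$ will enter everywhere via the observation that, when $\mu$ is a probability measure, the identity
\[
\frac{1}{d^n}(g_n)^*\mu \;=\; \frac{1}{d^n}\tilde g_n^{\,*}\mu \;+\; \frac{1}{d^n}\eta_{g_n}
\]
splits $\frac{1}{d^n}(g_n)^*\mu$ into a ``reduction piece'' of total mass $\deg\tilde g_n/d^n$ and a ``depth piece''; we shall show the former is $o(1)$ and that the latter tends to $\eta$.

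First I would compare $\eta_{g_n}$ with $\eta_{\varphi_n}$. Extract a subsequence in $k$ so that $A_{n,k}\to A_n\in\oRat_1$. If $A_n\in\Rat_1$ is non-degenerate, then $\varphi_n=A_n\circ g_n$; the composition formula \eqref{eq:depth-composition} yields $\eta_{\varphi_n}=\eta_{g_n}$ and $\deg\tilde g_n=\deg\tilde\varphi_n$. If $A_n\in\partial\Rat_1$, then Lemma~\ref{lem:s} applies: $\tilde g_n$ is constant (so $\deg\tilde g_n=0$) and
\[
\eta_{g_n} \;=\; \eta_{\varphi_n}+\tilde\varphi_n^{\,*}\delta_{a_n},
\]
whose discrepancy has total mass $\deg\tilde\varphi_n=o(d^n)$. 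In either case $\deg\tilde g_n\le\deg\tilde\varphi_n=o(d^n)$ and $\|\eta_{g_n}-\eta_{\varphi_n}\|=o(d^n)$, so Lemma~\ref{lem:depthmeasure} gives $\eta_{g_n}/d^n\to\eta$. Combined with the display above and the fact that $\tilde g_n^{\,*}\mu/d^n$ is a positive measure of total mass $\to 0$ on the compact sphere $\oC$, this proves $\frac{1}{d^n}(g_n)^*\mu\to\eta$; note that $\mu$ is automatically non-exceptional for $g_n$ since $\ex_{g_n}$ is either empty or a single atom of $\mu$-mass zero.

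For $\mu_{f_k}\to\eta$, by compactness of $M^1(\oC)$ it suffices to prove that every subsequential weak-$\ast$ limit $\mu_\infty$ of $\{\mu_{f_k}\}$ equals $\eta$. By a diagonal argument, pass to a subsequence along which $\mu_{f_k}\to\mu_\infty$ and $(A_{n,k})_\ast\mu_{f_k}\to\nu_n$ for every $n$. Applying Proposition~\ref{prop:mea-CE} to the sequence $\{f_k^{\circ n}\}_k\subset\Rat_{d^n}$ with scalings $\{A_{n,k}\}_k$ (using $\mu_{f_k^{\circ n}}=\mu_{f_k}$) yields, for each $n$,
\[
\mu_\infty \;=\; \frac{1}{d^n}\varphi_n^{\,*}\nu_n \;=\; \frac{1}{d^n}\tilde\varphi_n^{\,*}\nu_n+\frac{1}{d^n}\eta_{\varphi_n},
\]
where the second equality uses $\nu_n(\oC)=1$ together with $\ex_{\varphi_n}=\emptyset$ (which holds because $\varphi_n\in\Rat_{d^n}^*$). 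Letting $n\to\infty$, the first summand has total mass $\deg\tilde\varphi_n/d^n\to 0$ and the second converges to $\eta$ by Lemma~\ref{lem:depthmeasure}, hence $\mu_\infty=\eta$. The main obstacle is precisely this last step: one cannot directly interchange the $k\to\infty$ and $n\to\infty$ limits in $\mu_{f_k}=d^{-n}(f_k^{\circ n})^*\mu_{f_k}$, and Proposition~\ref{prop:mea-CE} (i.e. continuity of the pullback in Proposition~\ref{p:pullback-continuity}) is what allows the functional equation to survive the degeneration of $f_k$.
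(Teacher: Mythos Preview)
Your proof is correct and follows essentially the same approach as the paper's: split $(g_n)^*\mu$ and $\varphi_n^*\nu_n$ into depth-measure plus reduction-pullback pieces, kill the reduction piece using $\deg\tilde\varphi_n=o(d^n)$ and $\deg\tilde g_n\le\deg\tilde\varphi_n$, and invoke Proposition~\ref{prop:mea-CE} along subsequences to handle $\mu_{f_k}$. The only difference is organizational: you first prove $\eta_{g_n}/d^n\to\eta$ and then deduce $(g_n)^*\mu/d^n\to\eta$, whereas the paper directly compares $(g_n)^*\mu$ with $\eta_{\varphi_n}$; the case split (your ``$A_n$ degenerate or not'' versus the paper's ``$\tilde g_n$ constant or not'') is the same dichotomy. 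One minor remark: the claim that $\mu$ is ``automatically'' non-exceptional for $g_n$ is not quite right---this is exactly the hypothesis, not a consequence---but since it is assumed, the argument is unaffected.
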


\begin{proof}
  Let $\nu$ be an accumulation point of $\mu_{f_k}$.
  Consider $A_{n,k} \in \Rat_1$ such that $A_{n,k} \circ f_k^{\circ n} \to \varphi_n$, as $k \to \infty$, for
  all $n$. 
  Passing to a subsequence of $\{f_k\}$, we may assume that
  $\mu_{f_k}$ converges to $ \nu$ and 
   $(A_{n,k})_* \mu_{f_k}$ converges to some measure $\nu_n$ for all $n$. 
  By Proposition \ref{prop:mea-CE},
  $$\nu = \dfrac{1}{d^n} (\varphi_n)^* \nu_n = \frac{1}{d^n}\eta_{\varphi_n} + \dfrac{1}{d^n}(\tilde{\varphi}_n)^*\nu_n \to \eta,$$
  since $(\tilde{\varphi}_n)^* \nu_n (\oC) = \deg \tilde{\varphi}_n = o(d^n)$.
  Therefore,  the measures $\mu_{f_k}$ converge to $\eta$.

  To show the measure $(g_n)^* \mu/d^n$ converges to $\eta$, we now prove that the measures $(g_n)^*\mu$ and $\eta_{\varphi_n}$ differ
  in at most $o(d^n)$. Although the conclusion is the same, there
  are two cases, according to whether  $\tilde{g}_n$ is constant or not.
  
  For all $n$ such that $\tilde{g}_n$ is constant, from Lemma \ref{lem:s},
  the sequence $\{A_{n,k} \}$ converges to some $A_n \in \partial \Rat_1$, as $k \to \infty$, and 
  \begin{eqnarray*}
    (g_n)^*\mu =  \eta_{g_n} 
             = \eta_{\varphi_n} +  \sum_{z \in \tilde{\varphi}_n^{-1}(\tilde{A}_n)} \delta_z
              =  \eta_{\varphi_n} + \varepsilon_n,
  \end{eqnarray*}
  where $\varepsilon_n (\oC) = \deg \tilde{\varphi}_n$.
  
  If $\tilde{g}_n$ is not constant, then 
  \begin{eqnarray*}
    (g_n)^* \mu  =   \eta_{g_n} + (\tilde{g}_n)^* \mu
               =  \eta_{\varphi_n} + (\tilde{g}_n)^* \mu
              =  \eta_{\varphi_n} + \varepsilon'_n,
  \end{eqnarray*}
where $\varepsilon'_n (\oC) = \deg \tilde{g}_n$.
  
  For all $n \ge 1$,
  we have that $\deg \tilde{g}_n \le \deg \tilde{\varphi}_n = o (d^n)$.
  Therefore, both $\varepsilon_n(\oC)$  and $\varepsilon'_n(\oC)$  are $o(d^n)$, and hence 
  $$\lim_{n\to\infty} \dfrac{1}{d^n}(g_n)^* \mu = \lim_{n\to\infty} \dfrac{1}{d^n} \eta_{\varphi_n} =\eta.$$
  This completes the proof. 
\end{proof}

We will deal with sequences such that $\deg \tilde{\varphi}_n \neq o(d^n)$ in
the next two subsections, but before, let us record the following observation:

\begin{lemma}
  \label{l:constant}
  Let $\{ f_k\}$ be a sequence totally converging to $(g_n, \varphi_n)$
  and $g_1 \in \partial \Ratd$.
  Suppose that $\deg \tilde{\varphi}_n \neq o(d^n)$.
  Then  $$0<\dfrac{1}{d^n} \deg \tilde{\varphi}_n\le 1$$
  is eventually constant (independent of $n$).
  Moreover, for all $n$ 
  $$\deg \tilde{g}_n =0.$$
\end{lemma}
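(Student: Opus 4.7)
The first assertion follows from Lemma~\ref{l:decomposition}. Writing $\varphi_{n+1}=\varphi_{n,n+1}\circ\varphi_n$ with $\varphi_{n,n+1}\in\Rat_d^*$, we have $\deg\tilde\varphi_{n+1}=\deg\tilde\varphi_{n,n+1}\cdot\deg\tilde\varphi_n$, where each factor lies in $\{1,\ldots,d\}$. So $\deg\tilde\varphi_n/d^n$ is non-increasing in $(0,1]$, and the hypothesis $\deg\tilde\varphi_n\ne o(d^n)$ forces its limit $\rho$ to be strictly positive. Since each multiplicative ratio $\deg\tilde\varphi_{n,n+1}/d$ lies in the finite set $\{1/d,\ldots,1\}$, infinitely many ratios strictly less than $1$ would force $\rho=0$. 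Hence $\deg\tilde\varphi_{n,n+1}=d$ for every sufficiently large $n$ and $\deg\tilde\varphi_n/d^n$ is eventually constant equal to $\rho$.

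For the second assertion, the pivotal observation is that whenever $\tilde g_n$ is non-constant, one has $g_n\in\Rat_{d^n}^*$, and by Corollary~\ref{cor:cmpt} the convergence $f_k^{\circ n}\to g_n$ upgrades to $\llbracket f_k^{\circ n}]\to\llbracket g_n]$. Combined with total convergence this gives $\llbracket g_n]=\llbracket\varphi_n]$ and hence $\deg\tilde\varphi_n=\deg\tilde g_n$. I first show $\tilde g_1$ is constant. Assuming otherwise, I claim $\tilde g_n$ is non-constant for every $n$: a non-constant $\tilde g_n$ is not identically equal to any hole of $g_1$, so $(g_1,g_n)\notin I(d,d^n)$, and continuity of composition (\S\ref{s:composition}) yields $\tilde g_{n+1}=\tilde g_1\circ\tilde g_n$, still non-constant. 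Inductively $\deg\tilde g_n=(\deg\tilde g_1)^n$, and $g_1\in\partial\Ratd$ forces $\deg\tilde g_1\le d-1$, so the pivotal observation gives $\deg\tilde\varphi_n\le(d-1)^n=o(d^n)$, contradicting the standing hypothesis.

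To extend constancy to every $n$, I will apply the case just proved to the totally convergent sequence $\{h_k:=f_k^{\circ n}\}\subset\Rat_{d^n}$, which inherits the rescaling hypothesis since $\deg\tilde\varphi_{mn}/(d^n)^m\to\rho>0$. The $n=1$ argument applied to $\{h_k\}$ forces $\tilde g_n$ to be constant, provided $g_n\in\partial\Rat_{d^n}$. To check this, suppose $g_n\in\Rat_{d^n}$; then $[f_k^{\circ n}]\to[g_n]\in\rat_{d^n}$ and properness of the iterate map $\Psi_n:\ratd\to\rat_{d^n}$ (cf.~\cite[Proposition~4.1]{DeMarco07}) forces $[f_k]$ to be bounded in $\ratd$. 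Passing to a subsequence with $[f_k]\to[g]\in\ratd$, there exist $M_k\in\Rat_1$ with $M_kf_kM_k^{-1}\to g'\in\Ratd$; and $g_1\in\partial\Ratd$ forces $M_k$ to diverge in $\Rat_1$, for otherwise a bounded subsequence of $M_k$ would yield a non-degenerate limit for $f_k$. Writing $f_k^{\circ n}=M_k^{-1}(M_kf_kM_k^{-1})^{\circ n}M_k$, the divergence of $M_k$ prevents this expression from accumulating on any non-degenerate element of $\oRat_{d^n}$, contradicting $g_n\in\Rat_{d^n}$.

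The main obstacle is this last step: rigorously ruling out, for every fixed non-degenerate $\phi=(g')^{\circ n}\in\Rat_{d^n}$ and every divergent sequence $M_k\in\Rat_1$ with $\phi_k\to\phi$ in $\Rat_{d^n}$, the possibility that $M_k^{-1}\phi_k M_k$ accumulates on a non-degenerate map in $\oRat_{d^n}$. The difficulty comes from the non-constant reduction of the M\"obius limit $M_\infty$ meeting a periodic point of $\phi$, where naive evaluation $M_k^{-1}\circ\phi\circ M_k$ is indeterminate; controlling this requires a local Taylor-expansion analysis at such periodic points, keeping track of the precise growth rate of $M_k$ to confirm that the leading coefficient of $M_k^{-1}\phi_kM_k$ in $\oRat_{d^n}$ always diverges.
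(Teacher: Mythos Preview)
Your treatment of the first assertion is correct and identical to the paper's.

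For the second assertion there is a genuine gap, which you yourself flag in the final paragraph: you do not finish the proof that $g_n\in\partial\Rat_{d^n}$. You route the argument through properness of the \emph{moduli-space} iterate map $\Psi_n:\ratd\to\rat_{d^n}$ and then try to lift the conclusion back to parameter space by analyzing $M_k^{-1}\phi_kM_k$ for divergent $M_k$. That last step is not carried out, and the local Taylor analysis you sketch is both delicate and unnecessary.

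The paper avoids this entirely by invoking properness of the \emph{parameter-space} iterate map $\Phi_n:\Ratd\to\Rat_{d^n}$: since $f_k\to g_1\in\partial\Ratd$, the iterates $f_k^{\circ n}$ cannot remain in a compact subset of $\Rat_{d^n}$, so $g_n\in\partial\Rat_{d^n}$. One clean justification, using only results already available in the paper, is: if $g_n\in\Rat_{d^n}$ then $\mu_{f_k}=\mu_{f_k^{\circ n}}\to\mu_{g_n}$ by continuity of the maximal-entropy measure \cite{Mane83}, and $\mu_{g_n}$ is atom-free; but any weak limit of $\mu_{f_k}$ as $f_k\to\partial\Ratd$ is purely atomic by \cite[Theorem~A]{DeMarco14}, a contradiction.

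Once $g_{n_0}\in\partial\Rat_{d^{n_0}}$ is known, your detour through $h_k=f_k^{\circ n_0}$ is superfluous: your own ``$n=1$'' argument applies verbatim with $n_0$ in place of $1$. If $\deg\tilde g_{n_0}\ge1$ then, exactly as you argue, $\tilde g_{\ell n_0}=\tilde g_{n_0}^{\circ\ell}$ is non-constant for every $\ell$, so by your pivotal observation $\deg\tilde\varphi_{\ell n_0}=\deg\tilde g_{\ell n_0}=(\deg\tilde g_{n_0})^\ell$; since $\deg\tilde g_{n_0}<d^{n_0}$ this is $o(d^{\ell n_0})$, contradicting the first assertion. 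This is precisely the paper's proof.
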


\begin{proof}
  Let $\varphi_{n,n+1} \in \Rat^*_d$ be such that $\varphi_{n+1}=\varphi_{n,n+1} \circ \varphi_n$, as in Lemma \ref{l:decomposition}.
  Since $\deg \tilde{\varphi}_{n+1} = \deg\tilde{\varphi}_{n,n+1} \cdot \deg \tilde{\varphi}_{n}$ and $\deg \tilde{\varphi}_n \neq o(d^n)$,
  it follows that $\deg \varphi_{n,n+1} = d$ for all $n$ sufficiently large.
  Therefore, $\deg \tilde{\varphi}_n/d^n$ is eventually constant in $(0,1]$.

  We now prove that $\deg \tilde{g}_n =0$, by contradiction.
  First observe that $g_n \in \partial \Rat_{d^n}$ because $g_1 \in \partial \Ratd$ and iteration is a proper map.
  Suppose that, for some $n_0$, we have $\deg \tilde{g}_{n_0} \ge 1$. Then $\deg \tilde{\varphi}_{\ell n_0}=\deg \tilde{g}_{\ell n_0} = \deg \tilde{g}_{n_0}^{\circ \ell} = o(d^{n_0 \ell})$, as $\ell\to\infty$, 
  since $\deg \tilde{g}_{n_0} < d^{n_0}$.
  Therefore, $\deg \tilde{\varphi}_{\ell n}/d^{n\ell}$ is not eventually constant, as $\ell \to \infty$, which is a contradiction.
\end{proof}

\subsection{Potential good reduction}
\label{s:potential}
Here we consider the case in  which $\{ f_k\}$ totally converges
to $(g_n,\varphi_n)$ with $\deg \tilde{\varphi}_n \neq o(d^n)$ and $\{ [f_k]\} \subset \ratd$ converges
in $\ratd$.  In order to state the hypothesis that are really used, we
consider a slightly more general case:

\begin{proposition}
  \label{p:potential}
  Let $\{ f_k\}\subset \Ratd$ be a sequence converging to
  $g \in \partial \Ratd$.
  Assume that 
  $\{[f_k]\}$ converges in moduli space $\ratd$.
  Then $g$ has a unique hole, say $h$.
  Moreover, if $f_k^{\circ n} \to g_n$, as $k \to \infty$ and $\tilde{g}_n$
  is constant, for all $n$,
  then $$\lim_{k\to\infty} \mu_{f_k}=\delta_h = \lim_{n\to\infty}\dfrac{1}{d^n}(g_n)^* \mu$$
  for all probability measures $\mu$ that are non-exceptional for $(g_n)$. 
\end{proposition}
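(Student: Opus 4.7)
Since $[f_k] \to [\tilde f] \in \ratd$, I can find $M_k \in \Rat_1$ such that $\tilde f_k := M_k \circ f_k \circ M_k^{-1} \to \tilde f \in \Ratd$ uniformly. Because $f_k \to g \in \partial \Ratd$ while $\tilde f_k$ stays in $\Ratd$, the sequence $\{M_k\}$ cannot remain in any compact of $\Rat_1$; after passing to a subsequence, $M_k \to M \in \partial \Rat_1$ with hole $h$ and constant reduction $c := \tilde M$. Correspondingly, $M_k^{-1}$ converges to the degenerate Möbius with hole $c$ and reduction $h$. The plan is to identify $h$ as the common unique hole of every $g_n$, whence both assertions follow.

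For the convergence $\mu_{f_k} \to \delta_h$, I use $\mu_{f_k} = (M_k^{-1})_* \mu_{\tilde f_k}$ combined with $\mu_{\tilde f_k} \to \mu_{\tilde f}$, which is atom-free. Since $M_k^{-1} \to h$ uniformly on compacts of $\oC \setminus \{c\}$, a standard splitting argument---integrating a bounded continuous test function separately over a small neighborhood of $c$ and its complement, and using atom-freeness of $\mu_{\tilde f}$ at $c$---gives $(M_k^{-1})_* \mu_{\tilde f_k} \to \delta_h$.

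To see $\hole(g) = \{h\}$, I analyze preimages via Lemma~\ref{lem:pre-number}. For generic $z_0 \in \oC$, write $f_k^{-1}(z_0) = M_k^{-1}(\tilde f_k^{-1}(M_k(z_0)))$. Since $M_k(z_0) \to c$, the $d$ preimages under $\tilde f_k$ tend to $\tilde f^{-1}(c)$ with multiplicity; those lying away from $c$ are sent to $h$ by $M_k^{-1}$, while any clustering at $c$ (which occurs only when $\tilde f(c) = c$) approach $c$ at a rate dominated by the blow-up of $M_k^{-1}$ at its hole and are therefore also mapped to $h$. Lemma~\ref{lem:pre-number} then forces $\tilde g$ to be constant and $\hole(g) = \{h\}$ of depth $d$. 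Applying the identical argument to $M_k \circ f_k^{\circ n} \circ M_k^{-1} = \tilde f_k^{\circ n} \to \tilde f^{\circ n} \in \Rat_{d^n}$ gives $\hole(g_n) = \{h\}$ for every $n$; combined with the hypothesis that $\tilde g_n$ is constant, its depth must be $d^n$, so $\eta_{g_n} = d^n \delta_h$.

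Finally, for any $\mu$ non-exceptional for $(g_n)$, the constant reduction $\tilde g_n$ has degree $0$ and hence $\tilde g_n^* \mu = 0$, giving
$$
\frac{1}{d^n} g_n^* \mu = \frac{1}{d^n}\bigl(\tilde g_n^* \mu + \mu(\oC)\eta_{g_n}\bigr) = \delta_h
$$
for every $n$, which together with $\mu_{f_k} \to \delta_h$ from the first step yields the proposition. \textbf{The main obstacle} will be the preimage concentration in the third paragraph when $c$ is a fixed point of $\tilde f$ (the indeterminate case for the composition $M^{-1} \circ \tilde f \circ M$): one must quantitatively compare the rate at which preimages approach the hole $c$ of $M_k^{-1}$ with the rate at which $M_k^{-1}$ blows up there.
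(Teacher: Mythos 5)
Your second paragraph (the push-forward argument giving $\mu_{f_k}\to\delta_h$) and your closing computation of $(g_n)^*\mu/d^n$ are fine and essentially coincide with the paper's proof, which simply invokes Proposition~\ref{p:pullback-continuity} instead of redoing the splitting by hand. The gap is your third paragraph, and the obstacle you flag there is not a technicality that a rate comparison could remove: the intermediate claim you are trying to extract is false. It is not true in general that all $d$ preimages of a generic point under $f_k$ converge to $h$, and Lemma~\ref{lem:pre-number} does not ``force $\tilde g$ to be constant and $\hole(g)=\{h\}$ of depth $d$''. Take $d=2$ and $f_k(z)=\frac{z^2-z}{(1+1/k)z-1}$. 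Then $f_k\to g=[z(z-w):w(z-w)]\in\partial\Rat_2$, so $\tilde g=\operatorname{id}$ is nonconstant and $g$ has a single hole, at $z=1$, of depth $1$ rather than $d$; meanwhile the multiplier spectrum of $f_k$ is $\{1,1,1+1/k\}$, so $[f_k]$ converges in $\rat_2\cong\C^2$. For generic $z_0$ the two $f_k$-preimages of $z_0$ converge to $1$ and to $z_0$; the second one corresponds, under $M_k$, to a $\tilde f_k$-preimage of $M_k(z_0)$ clustering at $c$ (a fixed point of $\tilde f$) which $M_k^{-1}$ sends near $z_0$, not near $h$. So no quantitative comparison of rates can hold in general, and your derivation of $\hole(g)=\{h\}$ (hence also of $\hole(g_n)=\{h\}$, which you need for the second assertion) collapses at exactly the point you identified. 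Note that the proposition itself never claims $\tilde g$ is constant or that the depth is $d$; constancy of $\tilde g_n$ is an extra hypothesis in the second assertion.

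The paper proves uniqueness of the hole in a way that never requires controlling $M_k^{-1}$ near its hole. Writing $\psi_k:=L_k\circ f_k\circ L_k^{-1}\to\psi\in\Ratd$ and $L_k\to L\in\partial\Rat_1$ with hole $h$ and reduction $a$, it argues by contradiction: if $h'\neq h$ were a hole of $g$, then preimage counting (Lemma~\ref{lem:pre-number}(1)) shows that a fixed generic point $z_0\neq\psi(a)$ lies in $L_k\circ f_k(U)=\psi_k\circ L_k(U)$ for every small neighborhood $U$ of $h'$ and all large $k$; but $\overline U$ avoids $h$, so $L_k\to a$ uniformly on $U$ and $\psi_k\circ L_k(U)$ collapses onto $\psi(a)$, contradicting $z_0\neq\psi(a)$. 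Only the uniform collapse of $L_k$ away from its hole is used; the indeterminate behavior at the hole of $L_k^{-1}$ — the very thing your argument must control and cannot — never enters. To repair your proof, replace the third paragraph by an argument of this type, applied with the same $M_k$ to $f_k^{\circ n}\to g_n$ so that every $g_n$ has the same unique hole $h$; your second and fourth paragraphs can then be kept verbatim.
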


\begin{proof}
  Let  $\{ L_k \} \subset \Rat_1$ be such that
$\psi_k:=L_k \circ f_k \circ L^{-1}_k$ converges (uniformly) to some $ \psi \in \Ratd$, as $k\to\infty$. 
  We may assume that $\{L_k\}$ converges to a degenerate M\"obius transformation with a hole at $h$ and with constant reduction $a$.
  By contradiction we prove that $h$ is the unique hole of $g$.
  Suppose that $h' \neq h$ is a hole of $g$. 
  Consider a small neighborhood $U$ of $h'$.
  Pick a generic point
  $z_0\in\oC$ such that $z_0\neq \psi(a)$. For sufficiently large $k$,
  from Lemma \ref{lem:pre-number} (1), we deduce that
  $z_0 \in L_k \circ f_k(U)$; therefore,
  $z_0 \in \psi_k \circ L_k(U)$.
  The uniform convergence, in $U$, of $\{L_k\}$  to
  the constant $a$ implies that $z_0 = \psi(a)$, which contradicts the choice of
  $z_0$. Thus the first assertion holds. 

  We now prove the second assertion. Since
  the delta measure $\mu_\psi$ is non-exceptional for $L=\lim L_k$  and
  $\mu_{\psi_k} \to \mu_\psi$, we conclude that $\mu_{f_k} = (L_k)^* \mu_{\psi_k} \to \delta_h$, as $k\to\infty$, by Proposition~\ref{p:pullback-continuity}.
  To establish $\delta_h = (g_n)^* \mu/d^n$, 
  note that $L_k\circ f_k^{\circ n}\circ L_k^{-1}$ converges to
  $\psi^{\circ n}\in \mathrm{Rat}_{d^n}$. From the previous paragraph, we conclude that $h$ is the unique hole of $g_n$, for all $n \ge 1$. Moreover,
  $d_h(g_n)=d^n$ because $\deg \tilde{g}_n=0$. Hence
   $(g_n)^* \mu /d^n = \delta_h$, for all  probability measures $\mu$ that are non-exceptional for $(g_n)$.
\end{proof}

\subsection{Degeneration with large degree growth}
\label{s:ramified}
The aim of this section is to apply the Structure Theorem~\ref{p:structure} and its corollaries, to prove the following:

\begin{proposition}
  \label{p:ramified}
  Let $\{f_k\}$ be a sequence in $\Ratd$ totally converging to
  $(g_n,\varphi_n)$ such that $[f_k]$ converges to $[g] \in \partial \ratd$.
  Assume that 
  $\deg \tilde{\varphi}_{n} \neq o(d^n)$.
  Then there exists a probability measure $\nu$ such that
   $$\nu  = \lim_{k\to\infty} \mu_{f_k} = \lim_{n\to\infty} \dfrac{1}{d^n} (g_n)^* \mu ,$$
 for any probability measure $\mu$ that is non-exceptional for $(g_n)$.
\end{proposition}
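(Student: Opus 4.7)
The plan is to combine the Structure Theorem and its corollaries with the DeMarco--Faber equation (Proposition~\ref{prop:mea-CE}) to identify a candidate limit measure $\nu$, and then show both limits in the statement equal $\nu$.

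First, I set up the Structure Theorem. By Lemma~\ref{l:constant}, the hypothesis $\deg\tilde\varphi_n \ne o(d^n)$ forces $\deg \varphi_{n,n+1}=d$ for all sufficiently large $n$, so every sufficiently large $n$ is a fully ramified time and, in particular, there exists $m\ge 1$ for which $m,m+1,\dots,m+4$ are fully ramified. After passing to a subsequence of $\{f_k\}$ if necessary, I may assume the post-scalings $\{A_{n,k}\}$ have convergent changes of coordinates, so Theorem~\ref{p:structure} and Corollaries~\ref{coro:main} and~\ref{coro:main1} apply.

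Second, I compute $\lim_n (g_n)^*\mu/d^n$. By Lemma~\ref{l:constant}, $\tilde g_n$ is constant, so the pullback by $\tilde g_n$ is zero, and for any probability measure $\mu$ non-exceptional for $(g_n)$ the definition of the degenerate pull-back gives $g_n^*\mu = \eta_{g_n}$. By Corollary~\ref{coro:main1}, $d_z(g_n)/d^n = d_z(g_{m+1})/d^{m+1}$ for all $n>m$, so $\eta_{g_n}/d^n$ is independent of $n>m$; since $\deg\tilde g_n=0$ it is a probability measure, which I denote by $\nu$. Then $(g_n)^*\mu/d^n = \eta_{g_n}/d^n$ is eventually constant equal to $\nu$.

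Third, I show $\mu_{f_k}\to\nu$. Let $\sigma$ be any subsequential weak* limit of $\{\mu_{f_k}\}$ in $M^1(\oC)$. For each fixed $n>m$, Corollary~\ref{coro:main}(2) yields $(A_{n,k})_*\mu_{f_k}\to \delta_{a_n}$ where $a_n=a_{n-1,n}$. Apply Proposition~\ref{prop:mea-CE} to the sequence $\{f_k^{\circ n}\}\subset \Rat_{d^n}$ (using $\mu_{f_k^{\circ n}}=\mu_{f_k}$) with the scalings $\{A_{n,k}\}$ and limit $\varphi_n\in\Rat_{d^n}^*$: since $\varphi_n\in \Rat_{d^n}^*$ has $\deg\tilde\varphi_n\ge 1$, the point $a_n$ is non-exceptional for $\varphi_n$, and the equation yields
$$\sigma \;=\; \frac{1}{d^n}\varphi_n^*\delta_{a_n} \;=\; \frac{1}{d^n}\bigl(\tilde\varphi_n^*\delta_{a_n}+\eta_{\varphi_n}\bigr).$$
Applying Lemma~\ref{lem:s} to the sequence $\{f_k^{\circ n}\}$, whose scaling $\{A_{n,k}\}$ converges to a degenerate M\"obius map with reduction $a_n$, gives the identification of depth measures $\eta_{g_n} = \eta_{\varphi_n}+\tilde\varphi_n^*\delta_{a_n}$. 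Hence $\sigma = \eta_{g_n}/d^n = \nu$. Since every subsequential limit equals $\nu$, the full sequence converges; and because $\nu=\eta_{g_n}/d^n$ only depends on the GIT data $(g_n)$, not on the auxiliary subsequence used to secure convergent changes of coordinates, the conclusion holds for the original sequence as well.

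The main technical point is the identification $\eta_{g_n}=\eta_{\varphi_n}+\tilde\varphi_n^*\delta_{a_n}$, which is exactly Lemma~\ref{lem:s} read as an equality of measures (holes of $g_n$ outside $\tilde\varphi_n^{-1}(a_n)$ inherit the depth from $\varphi_n$, while those in $\tilde\varphi_n^{-1}(a_n)$ get an extra $\deg_h\tilde\varphi_n$); the rest of the argument is a packaging of the structural results of Section~\ref{s:fullyramified} together with continuity of the pull-back (Proposition~\ref{p:pullback-continuity}) that underlies Proposition~\ref{prop:mea-CE}.
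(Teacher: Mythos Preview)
Your proof is correct and follows essentially the same route as the paper: use Lemma~\ref{l:constant} to obtain fully ramified times and constant reductions $\tilde g_n$, invoke Corollary~\ref{coro:main}(2) to get $(A_{n,k})_*\mu_{f_k}\to\delta_{a_n}$, apply Proposition~\ref{prop:mea-CE} to any subsequential limit, and identify $\varphi_n^*\delta_{a_n}=\eta_{g_n}$ via Lemma~\ref{lem:s}. The only cosmetic difference is that you invoke Corollary~\ref{coro:main1} to see that $\eta_{g_n}/d^n$ is eventually constant before treating $\mu_{f_k}$, whereas the paper derives $\nu=\eta_{g_n}/d^n$ for all large $n$ directly from the same identity, making the extra appeal to Corollary~\ref{coro:main1} unnecessary but harmless.
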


\begin{proof}
  Consider an accumulation point $\nu$ of $\{\mu_{f_k}\}$.
  After passing to a subsequence, we may assume  $\mu_{f_k} \to \nu$.
  We may also assume that there exists a collection of scalings $\{A_{n,k} : n,k \ge 1\}$ for $\{f_k\}$ with convergent changes of coordinates.
  Since $\deg \tvarphi_n \neq o(d^n)$, any sufficiently large $n$ is a fully ramified time. Thus, we may apply
  Corollary \ref{coro:main} (2) to conclude that
  $(A_{n,k})_*\mu_{f_k} \to \delta_{a_n}$,  for  all sufficiently large $n$.
  By Proposition \ref{prop:mea-CE}, we have that 
  $$\nu=\frac{1}{d^n}(\varphi_n)^\ast\delta_{a_n},$$
and, therefore,
  \begin{eqnarray*}
\nu&=&\frac{1}{d^n}\left(\sum_{h\in\mathrm{Hole}(\varphi_n)}d_h (\varphi_n) \cdot \delta_h+(\tilde{\varphi}_n)^\ast\delta_{a_n}\right)\\
&=&\frac{1}{d^n}\left(\sum_{h\in\mathrm{Hole}(\varphi_n)}d_h (\varphi_n) \cdot \delta_h+\sum_{\tilde{\varphi}_n(h)=a_n}\deg_h \tilde{\varphi}_n \cdot \delta_{h}\right).   
  \end{eqnarray*}
By Lemma \ref{l:constant}, we may assume that $g_n$ has constant reduction and
apply Lemma \ref{lem:s} to conclude that $\hole(g_n) = \hole(\varphi_n) \cup
\tilde{\varphi}^{-1}_n(a_n)$. Then
$$\nu=\frac{1}{d^n}\sum_{h\in\mathrm{Hole}(g_n)} d_h(g_n) \cdot \delta_h = \dfrac{1}{d^n} g_n^\ast \mu,$$
for any probability measure $\mu$ that is non-exceptional for $g_n$ and,
as a consequence,  $\mu_{f_k}$ converges to $\nu$.
\end{proof}

\subsection{Proof of Theorem \ref{A}}
\label{s:A}
Given any sequence $\{f_k\} \subset \Ratd$
converging to $\mathbf{g} = (g_n) \in \wRatd$,
we simultaneously show that there exists a probability measure $\mu_{\mathbf{g}}$ such that 
$$\dfrac{1}{d^n}(g_n)^* \mu \to \mu_{\mathbf{g}},$$
and that if along a subsequence of $\{f_k \}$ we have convergence
of the measures $\mu_{f_k}$ to a measure $\nu$, then $\nu = \mu_{\mathbf{g}}$.

So given a subsequence such that the measures of maximal entropy converge, 
there exists a further subsequence
$\{f_{k_i}\}$ such that $\{f_{k_i}\}$ totally converges
to $(g_n,\varphi_n)$ and $[f_{k_i}] \to [g] \in \oratd$.
If $\deg \tilde{\varphi}_n = o(d^n)$ or $[g] \in \ratd$, by Propositions \ref{p:smalldegree} and \ref{p:potential},
there exists a probability measure $\mu_{\mathbf{g}}$ such that 
$$\dfrac{1}{d^n}(g_n)^* \mu \to \mu_{\mathbf{g}}=\nu.$$ 
If $[g] \in \partial \ratd$ and $\deg \tilde{\varphi}_n \neq o(d^n)$,
then we apply Proposition \ref{p:ramified} to obtain the same conclusion.
\hfill $\Box$

\section{Continuous extension of barycentered measures}
\label{s:moduli}

The moduli space $\ratd$ is a complex orbifold of dimension $2d- 2$. Its compactification $\oratd$ is the categorical quotient of semistable points in $\oRatd$ by the $\mathrm{SL}(2,\C)$ conjugation action, see \cite{Silverman98}. According to \cite[Section 3]{DeMarco07}, the (semi)stable points can be characterized by the depths; more precisely, 
\begin{itemize}
\item a point $\varphi\in\oRatd$ is \emph{semistable} if 
$d_z(\varphi)\le (d+1)/2$ for any $z\in\oC$ and $\tilde\varphi(h)\not=h$ for any $h\in\oC$ with $d_h(\varphi)\ge d/2$;
\item a point $\varphi\in\oRatd$ is \emph{stable} if 
$d_z(\varphi)\le d/2$ for any $z\in\oC$  and $\tilde\varphi(h)\not=h$ for any $h\in\oC$ with $d_h(\varphi)\ge (d-1)/2$. 
\end{itemize}

In \S \ref{s:bary}, we review basic facts from~\cite{DeMarco07}.
In particular, to  each element in $\ratd$ one may continuously assign a well-defined barycentered measure of maximal entropy, modulo  push-forward by a rotation in $\operatorname{SO}(3)$. In \S \ref{s:B}, we establish Theorem \ref{B}, which asserts continuous extension of this assignment to 
the resolution space $\wratd \subset \Pi_{n\ge1} \oratdn$ of the iterate map $\oratd\dasharrow\Pi_{n\ge 1}\oratdn$ discussed in the introduction~\S \ref{s:introduction}.

\subsection{Barycentered measures}\label{s:bary}

Recall that $M^1(\oC)$ is the space of probability measures endowed with the weak*-topology. Let $M^1_o(\oC)\subset M^1(\oC)$ be the subset of probability measures with no atom of weight $\ge 1/2$. Then, under the identifications of $\oC$ with $S^2\subset\R^3$ by the stereographic projection and of hyperbolic space $\H^3$ with the unit ball  in $\R^3$, the \emph{conformal barycenter $C(\mu) \in \H^3$} of $\mu\in M^1_o(\oC)$ is well-defined  and continuous on $M^1_o(\oC)$. More precisely,  given $\mu\in M^1_o(\oC)$, the  barycenter $C(\mu)$ is uniquely determined
by the following two properties: 
\begin{enumerate}
\item  $C(\mu)=0\in\H^3$ if and only if $\int_{S^2}xd\mu(x)=0$; and  
\item $C(A_*\mu)=A(C(\mu))$ for any $A\in\Rat_1\cong\text{Isom}^+\H^3$.
\end{enumerate}
The conformal barycenter $C(\mu)$ of any $\mu\in M^1(\oC)\setminus M^1_o(\oC)$ is undefined. For more details, see \cite[Section 8]{DeMarco07} and \cite{Douady86}. 

A measure $\mu \in M^1(\oC)$ is \emph{barycentered} if $C(\mu)$ is well-defined and equals to $0$. Denote by $M_{bc}^1(\oC)\subset M^1(\oC)$ the subset of barycentered measures, and let $\overline{M}^1_{bc}(\oC)$ be the closure of $M_{bc}^1(\oC)$ in $M^1(\oC)$. Each element in $\overline{M}^1_{bc}(\oC)$ is either barycentered or of  the form $(\delta_a+\delta_{-1/\bar a})/2$.
for some $a\in\oC$, see \cite[Lemma 8.3]{DeMarco07}. Then both $M^1_{bc}(\oC)$  and  $\overline{M}^1_{bc}(\oC)$ are invariant under the push-forward  action of the compact group of rotations $\operatorname{SO}(3)\subset\sl2c$. Consider the quotient topological spaces 
$$M^1_{dm}(\oC):=M^1_{bc}(\oC)/\operatorname{SO}(3)\ \ \text{and}\ \ \overline{M}^1_{dm}(\oC):=\overline{M}^1_{bc}(\oC)/\operatorname{SO}(3).$$
The space $M^1_{dm}(\oC)$ is  locally compact, Hausdorff and its one-point compactification is $\overline{M}^1_{dm}(\oC)$, see \cite[Theorem 8.1]{DeMarco07}. Denote by $[\infty]$ the unique point in $\overline{M}^1_{dm}(\oC)\setminus M^1_{dm}(\oC)$. 

For each $\mu\in M^1_o(\oC)$, there exists $A\in\Rat_1$ such that $A_*\mu\in M^1_{bc}(\oC)$. 
Writing $[\mu]:=[A_*\mu]\in M^1_{dm}(\oC)$, one obtains a continuous map 
$$
\begin{array}{cccc}
  &M^1_o(\oC)&\rightarrow& M^1_{dm}(\oC)\\
    &    \mu & \mapsto & [\mu ], 
    \end{array}
$$
which induces a continuous map $\rat_d\to M^1_{dm}(\oC)$, sending $[f]$ to $[\mu_f]$. We remark here that the above continuous map can not extend continuously to $M^1(\oC)$; for instance, considering $\mu_n\in M^1(\oC)$ such that $\mu_n(\{0\})=\mu_n(\{1/n\})=\mu_n(\{1\})=1/3$, we have that $[\lim_{n\to\infty}\mu_n]=[\infty]\not=\lim_{n\to\infty}[\mu_n]$. 

\subsection{Proof of Theorem \ref{B}}\label{s:B}
Consider a sequence $\{[f_k]\}\subset\Ratd$
converging to $([g_n])$ in $\wratd$.
Let $\mu\in M^1(\oC)$ be a non-exceptional measure for $(g_n)$.
For notational simplicity, 
\begin{eqnarray*}
  \mu_k &:=& \mu_{f_k}\\
  \nu_n &:=& \dfrac{(g_n)^*\mu}{d^n}.
\end{eqnarray*}
Given accumulation points $[\mu_\infty]$ of $\{[\mu_k]\}$
and $[\nu]$ of $\{[\nu_n]\}$, after some work, we will show that
$[\nu]=[\mu_\infty]$, that is, Theorem \ref{B} holds.  By continuity
of the measure of maximal entropy for non-degenerate rational maps~\cite{Mane83}, {if $[g_1]\in\rat_d$, equivalently, $[f_k]$ converges in $\rat_d$, then $[\nu]=[\mu_\infty]$. Thus we work under the assumption that
  $$[g_1]\in\partial\rat_d.$$}

Passing to a subsequence in $k$, we assume that $[\mu_k]$ converges
to $[\mu_\infty]$. Passing to a subsequence $n_i$, we assume
that $[\nu_{n_i}]$ converges to $[\nu]$.
It is sufficient to
establish that $[\mu_\infty]$ and $[\nu]$ coincide
when $[\mu_\infty] \neq [\infty]$ or  $[\nu] \neq [\infty]$.

\begin{lemma}\label{lem:infinity}
Assume that $[\mu_\infty]\not=[\infty]$. Then 
$$[\nu]=\lim_{n\to\infty}[\nu_n]=[\mu_\infty].$$
\end{lemma}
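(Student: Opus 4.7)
The strategy is to reduce to Theorem~\ref{A} applied to a conjugate sequence $F_k := A_k f_k A_k^{-1}$, where $A_k \in \sl2c$ is chosen so that $\mu_{F_k} = (A_k)_\ast \mu_{f_k}$ is barycentered. Since $[\mu_{f_k}] = [(A_k)_\ast \mu_{f_k}] \to [\mu_\infty] \neq [\infty]$, after adjusting the $A_k$ by elements of $\operatorname{SO}(3)$ and passing to a subsequence, I can arrange $\mu_{F_k} \to \mu_\infty^\flat$ in $M^1(\oC)$ for some barycentered representative $\mu_\infty^\flat \in M^1_{bc}(\oC) \subset M^1_o(\oC)$ of $[\mu_\infty]$.

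By compactness of each $\oRat_{d^n}$ and a diagonal extraction, I further arrange $F_k^{\circ n} \to G_n \in \oRat_{d^n}$ for every $n$, so $\mathbf{G} := (G_n) \in \wRatd$; since $[F_k^{\circ n}] = [f_k^{\circ n}]$ for all $k$ and $n$, we have $[G_n] = [g_n]$ in $\oratdn$. Applying Theorem~\ref{A} to $\{F_k\}$ gives $\mu_{F_k} \to \mu_{\mathbf{G}}$, so $\mu_{\mathbf{G}} = \mu_\infty^\flat$, and for any probability measure $\mu'$ non-exceptional for $\mathbf{G}$,
\[
  \tfrac{1}{d^n}(G_n)^\ast \mu' \to \mu_\infty^\flat \quad \text{in } M^1(\oC).
\]
Because $\mu_\infty^\flat \in M^1_o(\oC)$ and the barycentering quotient $M^1_o(\oC) \to M^1_{dm}(\oC)$ is continuous at each point of $M^1_o$, this yields
\[
  \bigl[\tfrac{1}{d^n}(G_n)^\ast \mu'\bigr] \to [\mu_\infty^\flat] = [\mu_\infty] \quad \text{in } \overline{M}^1_{dm}(\oC).
\]

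The hardest step is to identify these classes with $[\nu_n] = \bigl[\tfrac{1}{d^n}(g_n)^\ast \mu\bigr]$. Since $[G_n] = [g_n]$ in $\oratdn$, in the stable regime one can write $G_n = B_n g_n B_n^{-1}$ for some $B_n \in \sl2c$, giving the identity $\tfrac{1}{d^n}(G_n)^\ast (B_n)_\ast \mu = (B_n)_\ast \nu_n$, whose $\overline{M}^1_{dm}(\oC)$-class equals $[\nu_n]$ by $\sl2c$-invariance. The obstacle is that Theorem~\ref{A} furnishes convergence of $\tfrac{1}{d^n}(G_n)^\ast \mu'$ for a \emph{fixed} non-exceptional $\mu'$, whereas the natural choice $\mu'_n := (B_n)_\ast \mu$ depends on $n$ and may degenerate if $\{B_n\}$ is unbounded in $\sl2c$. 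I would overcome this by splitting into cases according to whether $\{B_n\}$ is bounded or divergent, in the latter case using Proposition~\ref{p:pullback-continuity} together with the Structure Theorem~\ref{p:structure} applied to $\{F_k\}$ to control how $(B_n)_\ast \mu$ interacts with the exceptional sets of $G_n$. Once the identification is established, $[\nu_n] \to [\mu_\infty]$ follows, and since $[\nu]$ was any accumulation point of $\{[\nu_n]\}$, the lemma is proved.
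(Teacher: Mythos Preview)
Your overall strategy coincides with the paper's: replace $f_k$ by a conjugate with barycentered maximal measure, extract a subsequential limit $(h_n)\in\wRatd$, apply Theorem~\ref{A} to obtain $\mu_{f_k}\to\mu_{\mathbf h}\in M^1_o(\oC)$, and then use continuity of $M^1_o(\oC)\to M^1_{dm}(\oC)$ to conclude $[(h_n)^*\mu/d^n]\to[\mu_\infty]$. You also correctly pinpoint the remaining step, namely the identification of $[(h_n)^*\mu'/d^n]$ with $[\nu_n]$.

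Where your argument stalls is in how you frame that identification. You introduce, for each $n$ separately, a M\"obius map $B_n$ with $G_n=B_n g_n B_n^{-1}$, then worry that $(B_n)_*\mu$ varies with $n$ and may degenerate. The paper avoids this entirely by exploiting a rigidity you do not use: all the $h_n$ arise as $\lim_k A_k f_k^{\circ n}A_k^{-1}$ for the \emph{same} barycentering sequence $\{A_k\}$. Consequently the limit sequence $(h_n)$ is determined by $([g_n])$ up to a \emph{single} rotation $R\in\operatorname{SO}(3)$ acting simultaneously on every coordinate, not by an uncontrolled family $\{B_n\}$. With this one compact degree of freedom, a countability argument lets one choose $R$ so that the fixed test measure $\mu$ is non-exceptional for every $Rh_nR^{-1}$ at once, after which Theorem~\ref{A} applies directly with the single measure $\mu$.

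Your proposed repair---splitting into bounded versus divergent $\{B_n\}$ and invoking Proposition~\ref{p:pullback-continuity} together with the Structure Theorem~\ref{p:structure}---is not carried out, and it is not clear how the Structure Theorem would control the interaction of $(B_n)_*\mu$ with the exceptional sets of $G_n$. This machinery is also unnecessary once the single-rotation observation is made. So the gap in your write-up is not conceptual but structural: you set up the comparison sphere-by-sphere instead of recognizing that the conjugation is global in $n$, and this turns a one-line countability argument into an open-ended case analysis.
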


\begin{proof}
After changing the representative of $[f_k]$, we may assume that $\mu_{f_k}$ is
  barycentered, for all $k$. Passing to a subsequence, $\{f_k\}$ converges
  in $\wRatd$, say to $\mathbf{h}=(h_n)$. By Theorem~\ref{A}, we have
  that $\mu_{f_k}$ converges to some measure
  $\mu_{\mathbf{h}}$. Since 
  $[\mu_\infty] \neq [\infty]$, we have $[\mu_{\mathbf{h}}]=[\mu_\infty] \neq [\infty]$.
  Therefore, $\mu_{\mathbf{h}}(z) < 1/2$ for all $z \in \oC$. Hence,
  for $n$ sufficiently large, $h_n$ is GIT-stable. Therefore, $[f_k^{\circ n}]$ converges to $[h_n]$ and $[h_n]=[g_n]$. In particular, $[(h_n)^* \mu/d^n]=
  [\nu_n]$ provided that $\mu$ is not exceptional for $h_n$, for all $n$.  The sequence $(h_n)$ is uniquely determined up to $\operatorname{SO}(3)$ conjugacy (the same for all $n$). For any given
  $n$, there are at most countably many elements of $\operatorname{SO}(3)$ so that
  $\mu$ is exceptional for the conjugacy of $h_n$ by that element. Hence
  we may find a conjugate so that $\mu$ is not exceptional for all $h_n$. Thus the conclusion holds. 
\end{proof}

Now we assume that $[\nu] \neq [\infty]$.
Passing to a further subsequences of $\{f_k\}$
and $\{g_{n_i}\}$ the following hold:
\begin{enumerate}
\item There exists $\varepsilon >0$ such that, for all $n_i$ and all $z\in\oC$,
  $$d_z(g_{n_i}) < \left(\dfrac{1}{2} -  2 \varepsilon\right) \cdot d^{n_i}.$$
\item For each $n_i$, there exists $B_{n_i,k} \in \Rat_1$ such that,
  as $k \to \infty$, 
  $$B_{n_i,k} \circ f_k^{\circ n_i} \circ B_{n_i,k}^{-1} \to g_{n_i}.$$
\item Let $$f_{n_i,k}:=B_{n_i,k} \circ f_k  \circ B_{n_i,k}^{-1}.$$
  Then, for all $n_i$ and $m$ there exists $g_{n_i,m} \in \oRat_{d^{m}}$
  such that
  $$f_{n_i,k}^{\circ m} \to g_{n_i,m}.$$
\item For each $n_i$ and each $m\ge 1$, there exist a collection of scalings $\{A_{n_i,m,k}\}$
  and maps $\psi_{n_i,m} \in \Rat^*_{d^{m}}$ 
  such that, as $k\to\infty$, 
  $$A_{n_i,m,k} \circ f_{n_i,k}^{\circ m} \to \psi_{n_i,m}.$$
\item There exists  $c \in [0,1]$ such that, as $n_i \to \infty$, 
  $$\dfrac{\deg \tilde\psi_{n_i,n_i}}{d^{n_i}} \to c \in [0,1].$$ 
\end{enumerate}
Note that $g_{n_i}$ is GIT-stable for each  $n_i$ by statement (1).
Thus we may choose $B_{n_i, k}$ as in statement (2).
Statements (3) and (4) say that $\{f_{n_i,k}\}$ totally converges to $(g_{n_i,m},\psi_{n_i,m})$ with associated scalings $\{A_{n_i,m,k}\}$.

Observe that
$g_{n_i,n_i} = g_{n_i}$. If $\tilde{g}_{n_i,m}$ is non-constant, then 
in (4) we choose the scaling $A_{n_i,m,k}$ to be the identity and, therefore,
$\psi_{n_i,m}={g}_{n_i,m}$.

For brevity, write $\psi_{n_i}:=\psi_{n_i,n_i}$.
 We establish the following two lemmas according to
the growth of 
$\deg\tilde\psi_{n_i}$; more precisely, according to whether $c>0$ or $c =0$.

\begin{lemma}\label{lem:inftyO}
  Suppose that $c>0$, that is, $\deg\tilde\psi_{n_i}\not=o(d^{n_i})$. Given a sufficiently large $n_i$,  
for any $\ell\ge n_i$ and any $z\in\oC$, 
$$\frac{d_z(g_{n_i,\ell})}{d^\ell}=\frac{d_z(g_{n_i})}{d^{n_i}}.$$
Moreover,
$$[\nu] = [\nu_{n_i}] = [\mu_\infty].$$
\end{lemma}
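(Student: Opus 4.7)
The plan is to apply Corollary~\ref{coro:main1} to the sequence $\{f_{n_i,k}\}_k$ for $n_i$ large. First I will verify that this sequence has a long stretch of consecutive fully ramified times below $n_i$: by Lemma~\ref{l:decomposition}, $\psi_{n_i,m+1}=\varphi_{n_i,m,m+1}\circ\psi_{n_i,m}$, so the ratio $\deg\tilde\psi_{n_i,m}/d^m$ is non-increasing in $m$, starts at $1$ at $m=0$, and equals $\deg\tilde\psi_{n_i}/d^{n_i}\to c>0$ at $m=n_i$. Each non-fully-ramified transition contributes a factor at most $(d-1)/d$ to this ratio, so the number of non-fully-ramified indices among $\{0,\ldots,n_i-1\}$ is bounded by a constant $N$ independent of $n_i$. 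For $n_i$ sufficiently large, I can thus find $m_0\leq n_i-5$ with $m_0,m_0+1,\ldots,m_0+4$ all fully ramified times for $\{f_{n_i,k}\}$.

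With this $m_0$ in hand, I apply Corollary~\ref{coro:main1} to $\{f_{n_i,k}\}_k$, which yields that $\tilde g_{n_i,\ell}$ is a constant in $\oC$ and
$$\frac{d_z(g_{n_i,\ell})}{d^\ell}=\frac{d_z(g_{n_i,m_0+1})}{d^{m_0+1}}$$
for every $\ell>m_0$ and every $z\in\oC$. Since $g_{n_i,n_i}=g_{n_i}$ by construction, specializing to $\ell=n_i$ and comparing with any $\ell\geq n_i$ gives the first assertion. Because $\tilde g_{n_i,\ell}$ is constant, $(\tilde g_{n_i,\ell})^*\mu$ vanishes and the measure $\tau_{n_i}:=(g_{n_i,\ell})^*\mu/d^\ell=\eta_{g_{n_i,\ell}}/d^\ell$ is well-defined and independent of $\ell\geq n_i$; in particular $\nu_{n_i}=\tau_{n_i}$ as honest measures in $\oC$.

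For the moreover part, I invoke Theorem~\ref{A} on $\{f_{n_i,k}\}_k$: the measures $\mu_{f_{n_i,k}}$ converge to $\mu_{\mathbf{g}_{n_i}}=\lim_\ell(g_{n_i,\ell})^*\mu/d^\ell=\tau_{n_i}$, where the last identity uses the stabilization from the previous step. Since $\mu_{f_{n_i,k}}=(B_{n_i,k})_*\mu_{f_k}$ is $\Rat_1$-conjugate to $\mu_{f_k}$, I have $[\mu_{f_{n_i,k}}]=[\mu_{f_k}]$, and passing to $k\to\infty$ gives $[\tau_{n_i}]=[\mu_\infty]$, hence $[\nu_{n_i}]=[\mu_\infty]$. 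For $\ell>n_i$, conjugation-equivariance of the pull-back, combined with Proposition~\ref{p:pullback-continuity} applied to the sequence $(f_{n_i,k}^{\circ\ell})^*((B_{n_i,k})_*\mu)=(B_{n_i,k})_*(f_k^{\circ\ell})^*\mu$, gives $[\nu_\ell]=[\tau_{n_i}]$, and taking the limit along the chosen subsequence yields $[\nu]=[\mu_\infty]$. The main obstacle here will be that as $k\to\infty$ the coordinate changes $B_{n_i,k}$ may degenerate, so $(B_{n_i,k})_*\mu$ could concentrate on a single point and its limit could be exceptional for $g_{n_i,\ell}$ or have bracket $[\infty]$; I plan to handle this by the standard genericity device of adjusting $\mu$ within its orbit so that the finitely many non-exceptionality constraints are simultaneously met, as already used in the proof of Lemma~\ref{lem:infinity}.
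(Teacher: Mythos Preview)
Your argument through step~6 is essentially the paper's proof. Two points deserve attention.

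\textbf{The bracket-continuity step.} When you write ``passing to $k\to\infty$ gives $[\tau_{n_i}]=[\mu_\infty]$'', you are using that $\mu_{f_{n_i,k}}\to\tau_{n_i}$ in $M^1(\oC)$ implies $[\mu_{f_{n_i,k}}]\to[\tau_{n_i}]$ in $\overline{M}^1_{dm}(\oC)$. As the paper warns in \S\ref{s:bary}, the map $\mu\mapsto[\mu]$ is \emph{not} continuous on all of $M^1(\oC)$; it is continuous only on $M^1_o(\oC)$. You must therefore check that $\tau_{n_i}=\nu_{n_i}\in M^1_o(\oC)$. This follows immediately from hypothesis~(1) together with the fact (which you did establish) that $\tilde g_{n_i}$ is constant: then $\nu_{n_i}(\{z\})=d_z(g_{n_i})/d^{n_i}<1/2-2\varepsilon$. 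The paper states this explicitly (``$\nu_{n_i}\in M^1_o(\oC)$ since $g_{n_i}$ is stable''); you should too, since without it the passage to brackets is unjustified.

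\textbf{Your step 7 is unnecessary and the obstacle you flag is real.} Once you have $[\nu_{n_i}]=[\mu_\infty]$ for every sufficiently large $n_i$, you are done: by the setup, $[\nu]$ is \emph{defined} as $\lim_i[\nu_{n_i}]$, and a sequence that is eventually constant equal to $[\mu_\infty]$ has limit $[\mu_\infty]$. Your attempt to deduce $[\nu_\ell]=[\tau_{n_i}]$ for arbitrary $\ell>n_i$ via Proposition~\ref{p:pullback-continuity} and the identity $(f_{n_i,k}^{\circ\ell})^*((B_{n_i,k})_*\mu)=(B_{n_i,k})_*(f_k^{\circ\ell})^*\mu$ runs into genuine trouble: you have no representative $g_\ell$ with $f_k^{\circ\ell}\to g_\ell$ for general $\ell$, and even along the subsequence the degeneration of $B_{n_i,k}$ can send $(B_{n_i,k})_*\mu$ to a Dirac mass that may well be exceptional. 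The ``genericity device'' from Lemma~\ref{lem:infinity} does not apply here, since there one adjusts the representative $(h_n)$ within its $\operatorname{SO}(3)$-orbit, not the test measure $\mu$ itself. Simply delete this step and conclude directly from the eventual constancy of $[\nu_{n_i}]$, as the paper does.
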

\begin{proof}
  Since  $c>0$, for sufficiently large $n_i$,
  the number of times $m<n_i$ such that $\deg\tilde\psi_{n_i,m+1} <
  d \cdot \deg \tilde\psi_{n_i,m}$ is bounded independently of $n_i$.
  Thus, 
   we can pick $N$ sufficiently large so that for any $n_i\ge N$,
  there is $1\le m_i\le n_i-5$ satisfying that $A_{n_i,m_i+4,k}\circ f_{n_i,k}^{\circ 5}\circ A_{n_i,m_i,k}^{-1}$ converges in $\Rat_{d^5}$. 
  {Since we are working under the assumption that $[g_1]\in\partial\rat_d$, } by Corollary \ref{coro:main1}, we conclude  that for any $\ell\ge n_i$ and any $z\in\oC$, the reduction $\tilde g_{n_i,\ell}$ is a constant independent of $\ell$ and  
$$\frac{d_z(g_{n_i,\ell})}{d^\ell}=\frac{d_z(g_{n_i})}{d^{n_i}}.$$ 
Hence $$\frac{(g_{n_i,\ell})^*\mu}{d^\ell}=\frac{(g_{n_i})^*\mu}{d^{n_i}}=\nu_{n_i}.$$ 
Since  by Theorem \ref{A} 
$$\lim_{\ell\to\infty}\frac{(g_{n_i,\ell})^*\mu}{d^\ell}=\lim\limits_{k\to\infty}\mu_{f_{n_i,k}}, $$
we have that 
$$\nu_{n_i}=\lim_{k\to\infty}\mu_{f_{n_i,k}}.$$
{Observing that $\nu_{n_i}\in M^1_o(\oC)$ since $g_{n_i}$ is stable, by the continuity of the map $M^1_o(\oC)\rightarrow M^1_{dm}(\oC)$, we obtain that 
$$[\nu_{n_i}]=\lim\limits_{k\to\infty}[\mu_{f_{n_i,k}}]=\lim_{n\to\infty}[\mu_{f_k}]=[\mu_\infty].$$
Thus the conclusion follows. }
\end{proof}

\begin{lemma}\label{lem:inftyo}
  Suppose that $c=0$, that is, $\deg\tilde\psi_{n_i}=o(d^{n_i})$. Then
 $$[\nu]=[\mu_\infty]$$
\end{lemma}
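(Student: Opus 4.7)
The plan is to reduce to Lemma~\ref{lem:infinity}: we aim to show that the hypothesis $c=0$ together with $[\nu]\neq[\infty]$ forces $[\mu_\infty]\neq[\infty]$, whence the conclusion $[\nu]=[\mu_\infty]$ follows at once. Let $\alpha_{n_i}$ denote the limit measure assigned by Theorem~\ref{A} to $(g_{n_i,m})_m \in \wRatd$, so that $\mu_{f_{n_i,k}} \to \alpha_{n_i}$ as $k\to\infty$ for each fixed $n_i$. The key step will be a total-variation comparison between $\nu_{n_i}$ and $\alpha_{n_i}$.

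First I would apply Proposition~\ref{prop:mea-CE} to the sequence $\{f_{n_i,k}^{\circ n_i}\}_k$ with scalings $\{A_{n_i,n_i,k}\}_k$: after passing to a subsequence in $k$, letting $\beta_{n_i}$ be the weak limit of $(A_{n_i,n_i,k})_*\mu_{f_{n_i,k}}$, one obtains
$$\alpha_{n_i} = \tfrac{1}{d^{n_i}} \psi_{n_i}^* \beta_{n_i} = \tfrac{1}{d^{n_i}}\bigl(\eta_{\psi_{n_i}} + \tilde\psi_{n_i}^* \beta_{n_i}\bigr).$$
Then I would compare this with $\nu_{n_i} = (\eta_{g_{n_i}} + \tilde g_{n_i}^* \mu)/d^{n_i}$, splitting into two cases: when $\tilde g_{n_i}$ is non-constant the scaling $A_{n_i,n_i,k}$ may be taken to be the identity so that $\psi_{n_i}=g_{n_i}$; when $\tilde g_{n_i}$ is constant, Lemma~\ref{lem:s} yields $\eta_{g_{n_i}} = \eta_{\psi_{n_i}} + \tilde\psi_{n_i}^* \delta_a$ for some $a\in\oC$. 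A short calculation in either case then shows that $\|\alpha_{n_i} - \nu_{n_i}\|_{TV}$ is bounded by a constant times $\deg\tilde\psi_{n_i}/d^{n_i}$, which is $o(1)$ by the hypothesis $c=0$.

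Next, assumption~(1) bounds the weight of each atom of $\eta_{g_{n_i}}/d^{n_i}$ strictly below $1/2-2\varepsilon$, and together with the TV estimate this implies that each atom of $\alpha_{n_i}$ is bounded away from $1/2$ for large $n_i$; hence $\alpha_{n_i} \in M^1_o(\oC)$. Continuity of the quotient map $M^1_o(\oC) \to M^1_{dm}(\oC)$ at $\alpha_{n_i}$, combined with $\mu_{f_{n_i,k}} \to \alpha_{n_i}$, yields $[\mu_{f_k}] = [\mu_{f_{n_i,k}}] \to [\alpha_{n_i}]$ as $k\to\infty$, so $[\mu_\infty] = [\alpha_{n_i}] \neq [\infty]$. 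Lemma~\ref{lem:infinity} then delivers $[\nu]=[\mu_\infty]$.

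The main obstacle is the TV comparison in the constant-reduction subcase, where the identity $\eta_{g_{n_i}} = \eta_{\psi_{n_i}} + \tilde\psi_{n_i}^* \delta_a$ introduces a discrepancy of mass $\deg\tilde\psi_{n_i}/d^{n_i}$; this is precisely the quantity that $c=0$ forces to $o(1)$, so the whole argument really hinges on exploiting the small-degree growth of the final iterate.
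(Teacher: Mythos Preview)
Your argument is correct and in fact takes a shorter route than the paper's proof. The paper proceeds by contradiction (assuming $[\mu_\infty]=[\infty]$) and then splits into two cases according to whether the coordinate changes $B_{n_i,k}\circ B_{n_j,k}^{-1}$ converge to M\"obius maps or to degenerate ones. In Case~1 (convergent changes of coordinates), the paper transfers the small-degree hypothesis from $\psi_{n_i}$ back to a fixed index $n_1$ and applies Proposition~\ref{p:smalldegree} to $\{f_{n_1,k}\}$. In Case~2 (degenerate changes of coordinates), the paper performs a preimage-counting argument that eventually contradicts assumption~(1).

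You bypass this dichotomy entirely. By invoking Theorem~\ref{A} for each fixed $n_i$ to obtain $\alpha_{n_i}$, and then using Proposition~\ref{prop:mea-CE} only at the single time $n_i$ (rather than along the whole iterate sequence), you never need to compare the coordinate frames $B_{n_i,k}$ for different $i$. The total-variation bound $\|\alpha_{n_i}-\nu_{n_i}\|_{TV}\le 2\deg\tilde\psi_{n_i}/d^{n_i}$ (together with the analogous bound $\|\nu_{n_i}-\eta_{g_{n_i}}/d^{n_i}\|_{TV}\le \deg\tilde g_{n_i}/d^{n_i}$, which you should make explicit for the non-constant subcase) then does all the work. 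This is cleaner: the paper's Case~2 exists precisely to handle the situation where one cannot reduce to a single frame, and your argument shows that reduction is unnecessary. What the paper's approach buys is that it does not appeal to Theorem~\ref{A} as a black box, using only the more elementary Proposition~\ref{p:smalldegree}; your approach trades that for brevity.
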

\begin{proof}
  By Lemma~\ref{lem:infinity}, we may assume that $[\mu_\infty]=[\infty]$.
  Passing to a subsequence in $k$ we can assume that $B_{n_i,k}\circ B_{n_j,k}^{-1}$ converges to some $B_{n_j,n_i}\in\oRat_1$ for any $n_i\not=n_j$.  Then passing to a further subsequence in $n_i$ if necessary, we can assume that $B_{n_j,n_i}\in\Rat_1$ for all $n_i,n_j$ or $B_{n_j,n_i}\in\partial\Rat_1$ for all $n_i,n_j$.
 
\noindent
  \emph{Case 1:}  $B_{n_j,n_i}\in\Rat_1$ for all $n_i\not=n_j$.  
  Since $\{f_{n_1,k}\}$ totally converges
  to $(g_{n_1,m}, \psi_{n_1,m})$, passing to a subsequence in $k$,  
  we may choose $\{A_{n_1,n_i,k}\}$ so that
  $\psi_{n_i} = \psi_{n_1,n_i} \circ C_{n_i}$ for some $C_{n_i} \in \Rat_1$.

   By Lemma \ref{lem:depthmeasure},   let
   $$\nu':=\lim_{i\to\infty}{\eta_{\psi_{n_1,n_i}}} \in M^1(\oC),$$
   where $\eta_{\psi_{n_1,n_i}}$ is the corresponding depth measure
   from Definition \ref{def:depthmeasure}.
   We claim that $\nu'\in M^1_o(\oC)$ and hence $[\nu']\not=[\infty]$.
Indeed, given $z \in \oC$, let $w_i = C_i^{-1}(z)$, then
$$\nu'(\{z\}) =\lim \dfrac{d_z (\psi_{n_1,n_i})}{d^{n_i}} = \lim \dfrac{d_{w_i} (\psi_{n_i})}{d^{n_i}} \le \lim \dfrac{d_{w_i}(g_{n_i})}{d^{n_i}} \le \dfrac12 - 2 \varepsilon.$$
The first equality follows from the proof of Lemma~\ref{lem:depthmeasure} since
$\nu'(\{z\})$ is the limit of the non-decreasing sequence
$\{\eta_{\psi_{n_1,n_i}}(\{z\})=d^{-{n_i}} \cdot d_z (\psi_{n_1,n_i})\}$.
The second equality follows from  $\psi_{n_i} = \psi_{n_1,n_i} \circ C_{n_i}$.
The first inequality is Lemma \ref{lem:s} applied
to $z = w_i$ and $\{f_{n_i,k}^{\circ n_i}\}$.
The second inequality is statement (1). 
  
{Moreover, since $\deg\tilde\psi_{n_i}=o(d^{n_i})$, we have that $\deg \tilde{\psi}_{n_1, n_i} = o (d^{n_i})$, as $i\to\infty$, and hence $\deg \tilde{\psi}_{n_1,n} = o(d^n)$, as $n\to\infty$. Thus, applying Proposition \ref{p:smalldegree} to $\{f_{n_1,k}\}$, we conclude that 
  $$\lim_{k\to\infty} \mu_{f_{n_1,k}} = \lim_{i\to\infty}\frac{(g_{n_1,n_i})^* \mu'}{d^{n_i} } =\nu',$$
  whenever $\mu'\in M^1(\oC)$ is any non-exceptional measure for $(g_{n_1,n})$.
Continuity of the map $M^1_o(\oC)\rightarrow M^1_{dm}(\oC)$ yields
$$[\mu_\infty]= \lim_{n\to\infty}[\mu_{f_k}]=\lim_{k\to\infty}[\mu_{f_{n_1,k}}]=[\nu'].$$
   Hence $[\mu_\infty]\neq [\infty]$, which is a contradiction.}

  \smallskip
  \noindent
  \emph{Case 2:} $B_{n_j,n_i}\in\partial\Rat_1$ for any $n_i\not=n_j$.
  We show that this case cannot occur under our assumptions; more precisely, by counting preimages, we will obtain a contradiction with statement (1).

 Since $\deg\tilde\psi_{n_i}=o(d^{n_i})$, we consider $n_{i_0}$ such that for $n_i \ge n_{i_0}$,
 $$\deg\tilde\psi_{n_i}<\varepsilon\cdot d^{n_i}.$$
 Let $\ell:=n_{i_0}$ and $s:=n_{i_0+1}$.
 For brevity write
 \begin{equation*}
    \gamma_k  :=  B_{\ell,k} \circ B_{s,k}^{-1}
  \end{equation*}
  with $\gamma_k \to B_{(s,\ell)}$ and $\gamma_k^{-1} \to B_{(\ell,s)}$ in $\oRat_1$.
 Let $b_{\ell}:=\tilde B_{s,\ell}$ and
 $b_{s}:=\tilde B_{\ell,s}$.

 Intuitively, our argument is as follows.
 We restrict our attention to the collection of scalings $\{B_{\ell,k},
 B_{s,k} \}$ with associated space formed by two spheres $\oC_\ell$ and
 $\oC_s$. 
 We consider the set of preimages $S$ of a generic point in $\oC_s$, under
 an appropriate iterate  $f_{s,k}^{\circ s}$. Then statement (1) implies
 that less than half of the elements of $S$
 map, under $f_{s,k}^{\circ s}\circ\gamma_k^{-1}$, close to  $b_\ell$, the hole of $B_{(\ell,s)}$.
 So more than half of the elements are bounded away from $b_\ell$.
 Applying the change of coordinates $\gamma_k^{-1}$, 
 one concludes that more than half of the elements
 of $S$  are
 close to $b_s$, which is a contradiction with statement (1).

More precisely, consider 
\begin{eqnarray*}
  \alpha_k &:= & B_{s,k} \circ f_k^{\circ s-\ell} \circ B_{\ell,k}^{-1},\\
  \beta_k &:= & B_{\ell,k} \circ f_k^{\circ \ell} \circ B_{\ell,k}^{-1}.
  \end{eqnarray*}
  Note that
  $$f_{s,k}^{\circ s} =\alpha_k \circ \beta_k \circ \gamma_k \to g_s.$$
  Let $V$ be a small neighborhood of $b_{s}$.
  For $w\in\oC$, we now estimate $\#(f_{s,k}^{-s}(w)\cap V)$. 
First observe that $\#(\alpha_k)^{-1}(w)=d^{s-\ell}$. Given a small neighborhood $D$ of $b_{\ell}$, since $d_{b_{\ell}}(g_\ell) < (1/2-2\varepsilon)\cdot d^\ell$ from the statement (1) in the assumptions on $\{f_k\}$ and $\{g_{n_i}\}$, by Lemma \ref{lem:pre-number} (2), we have that 
$$\#((\alpha_k\circ\beta_k)^{-1}(w)\cap D)< (1/2-2\varepsilon)\cdot d^s+\varepsilon\cdot d^s=(1/2-\varepsilon)\cdot d^s.$$
Therefore,
$$\#((\alpha_k\circ\beta_k)^{-1}(w)\cap(\oC\setminus D))\ge(1/2+\epsilon)\cdot d^s.$$
In  $ \oC \setminus D$,  we have uniform convergence of $\gamma^{-1}_k$
to the constant $b_s$. 
We conclude that 
$$\#(f_{s,k}^{\circ -s}(w)\cap V)\ge(1/2+\varepsilon)\cdot d^s.$$
By Lemma \ref{lem:pre-number} (2), we have that
$d_{b_{s}}(g_s)\ge d^s/2$, which contradicts statement (1). 
\end{proof}

This completes the proof of Theorem \ref{B}. \qed

\appendix
\section{Trees of Bouquets of Spheres}
\label{s:appendix}

\subsection{Hausdorff limits of spheres}
Let $J$ be a finite set of indices. Consider a collection of scalings
$$\mathbf{A}:=\{ A_{n,k}: n \in J,k \ge 1 \} \subset \Rat_1,$$
  indexed by $J$.
Assume that for all $n,n' \in J$ with $n \neq n'$,
there exists $A_{(n,n')} \in \partial \Rat_1$ such that
$$A_{n',k} \circ A_{n,k}^{-1} \to A_{(n,n')}\ \ \text{as}\ \  k \to \infty.$$
We say that $\mathbf{A}$ is \emph{a collection of independent scalings indexed by $J$.}


For $ n \neq n'$, let
$$a_{n,n'} := \widetilde{A}_{(n,n')}.$$
Note that
$$a_{n',n} = \operatorname{Hole}(A_{(n,n')}).$$
For each $n \in J$, consider a copy of $\oC$ inside $\oC^{J}$:
$$\oC_n:= \left\{ (w_j) \in  \oC^{J} : w_j = a_{n,j}
\text{ for all } j \neq n \right\}.$$
Denote by $\rho_n:\oC^{J} \to \oC$ the projection to the $n$-th coordinate.
This projection  provides a \emph{standard coordinate  for $\oC_n$}. That is,
$\rho_n : \oC_n \to \oC$ is  a conformal isomorphism.

Consider the union of Riemann spheres 
 $$\cS := \bigcup_{n\in J} \oC_n \subset  \oC^{J}.$$
 We will show that $\cS$ is a tree of bouquets of spheres
 that arises as the
 Hausdorff limit of Riemann spheres. 
 We say that \emph{$\cS$ is the  space  associated to
   $\mathbf{A}$.}

 \smallskip
For all $n \neq n' \in J$,
note that $\rho_n (\oC_{n'}) = \{a_{n',n}\}$.
Moreover, the intersection $\oC_{n'} \cap \oC_{n}$
is not empty if and only if 
$$a_{n',n''} = a_{n,n''}$$
for all $ J \ni n'' \neq n,n'$.
In this case, the intersection point is $a_{n',n} \in \oC_{n}$
and $a_{n,n'} \in \oC_{n'}$ in the respective standard coordinates.

An alternative description of $\cS$ can be given
as the quotient of the disjoint union of abstract Riemann spheres
$\sqcup_{n \in J}\oC_{n}$ by an equivalence relation $\sim$.
In fact, let $\sim$ be the equivalence relation such that $a_{n',n} \in \oC_{n}$ is identified with $a_{n,n'} \in \oC_{n'}$ if and only
  if $$a_{n,n''} = a_{n',n''}$$
  for all $ n'' \neq n, n'$. All other $\sim$-classes are trivial.
  Then
  $$\cS \equiv \sqcup_{n \in J}\oC_{n}/ \sim.$$

  Associated to $\mathbf{A}$ we also have a sequence of embedding of
  $\oC$ in $\oC^{J}$:
$$
\begin{array}{cccc}
  \cA_{k}:&\oC&\to& \oC^{J}\\
          & z & \mapsto & (A_{n,k}(z))_{n \in J}.
\end{array}
$$

  \begin{proposition}
    \label{p:bouquet}
    Consider a collection $\mathbf{A}$
     of independent scalings indexed by
    $J$. Let $\cS \subset \oC$
    be the space associated to $\mathbf{A}$. If
    $\cA_k : \oC \to \oC^J$ is the associated sequence of embeddings,
  then
  $$\cS = \cap_{k_0 \ge 1} \overline{\cup_{k \ge k_0} \cA_k (\oC)}.$$
  Moreover, $\cS$ is simply connected.
\end{proposition}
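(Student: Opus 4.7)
The plan is to prove both assertions from a single structural input: the compositional identities obtained by letting $k\to\infty$ in the factorization
$$A_{n_3,k}\circ A_{n_1,k}^{-1}=(A_{n_3,k}\circ A_{n_2,k}^{-1})\circ(A_{n_2,k}\circ A_{n_1,k}^{-1}).$$
Since the composition of two degenerate M\"obius maps is a degenerate M\"obius with the inner's hole and the outer's reduction, provided the inner's reduction is not the outer's hole, this yields: for distinct $n_1,n_2,n_3\in J$, whenever $a_{n_1,n_2}\neq a_{n_3,n_2}$ one has both $a_{n_2,n_1}=a_{n_3,n_1}$ and $a_{n_2,n_3}=a_{n_1,n_3}$.

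For the Hausdorff limit identification, I would prove both inclusions. For $\cS\subseteq \bigcap_{k_0\ge 1}\overline{\bigcup_{k\ge k_0}\cA_k(\oC)}$: given $(w_j)\in \oC_n$ with $w_n\neq a_{j,n}$ for every $j\neq n$, set $z_k:=A_{n,k}^{-1}(w_n)$, so that $A_{n,k}(z_k)=w_n$ and, for $j\neq n$,
$$A_{j,k}(z_k)=(A_{j,k}\circ A_{n,k}^{-1})(w_n)\longrightarrow A_{(n,j)}(w_n)=a_{n,j},$$
since $w_n$ avoids the unique hole $a_{j,n}$ of the degenerate M\"obius $A_{(n,j)}$. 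Thus $\cA_k(z_k)\to (w_j)$; closedness of the right-hand side and density of such generic points in each $\oC_n$ give the whole of $\cS$. For the reverse inclusion, suppose $\cA_{k_m}(z_{k_m})\to (w_j)$ and set $N(w):=\{n\in J:w_n\neq a_{j,n}\text{ for every }j\neq n\}$. If some $n_0\in N(w)$ exists, the same computation forces $w_j=a_{n_0,j}$ for $j\neq n_0$, placing $(w_j)\in \oC_{n_0}$. The delicate case is $N(w)=\emptyset$, which I would handle by induction on $|J|$: noting that for each pair $\{n,j\}$ at least one of $w_n=a_{j,n}$ and $w_j=a_{n,j}$ holds, one seeks a ``source'' $n_0$ with $w_j=a_{n_0,j}$ for every $j$. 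Removing a suitable index $n_0$ and applying the induction hypothesis produces a source $n_*$ of the smaller problem; the compositional identities then promote either $n_0$ or $n_*$ to a source of the full index set, so $(w_j)\in \oC_{n_0}$ (or $\oC_{n_*}$).

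For simple connectivity I would encode $\cS$ via the bipartite ``dual graph'' $\Gamma$ whose vertices are the spheres $\{\oC_n\}_{n\in J}$ together with the bouquet points (maximal subsets of spheres sharing a common point in $\cS$), with edges recording incidence. Connectedness of $\Gamma$ is equivalent to connectedness of $\cS$, which follows from the Hausdorff limit identification and the connectedness of each $\cA_k(\oC)\cong \oC$. Acyclicity reduces, via the criterion $\oC_n\cap \oC_{n'}\neq\emptyset \iff a_{n,n''}=a_{n',n''}$ for all $n''\neq n,n'$, to the geometric claim that any three pairwise intersecting spheres $\oC_{n_1},\oC_{n_2},\oC_{n_3}$ share a common point; a direct coordinate-by-coordinate comparison using the compositional identities verifies that the three candidate intersection points coincide. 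Hence $\Gamma$ is a tree, and $\cS$ is homotopy equivalent (by collapsing a spanning tree of bouquet points) to the wedge $\bigvee_{n\in J}\oC_n\simeq \bigvee S^2$, which is simply connected. The main obstacle in both halves is the ``degenerate/triangle'' configuration, where naive coordinate tracking underdetermines the situation and one must invoke the compositional identities among the $a_{n,n'}$'s to propagate information across indices.
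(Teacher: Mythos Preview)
Your forward inclusion is exactly the paper's, and your inductive treatment of the reverse inclusion is a legitimate alternative to the paper's cycle argument (the paper instead builds a function $s:J\to J$ with $u_j=a_{s(j),j}$, finds a periodic orbit $j_0,\dots,j_p=j_0$, and chains the compositional identities along it to force $u_{j_0}=a_{j_{p-1},j_0}\neq u_{j_0}$). Your ``promotion'' step is more delicate than you indicate but does go through: if $w_{n_0}\neq a_{n_*,n_0}$ and some $j\neq n_0,n_*$ had $a_{n_*,j}\neq a_{n_0,j}$, the compositional identity gives $a_{j,n_0}=a_{n_*,n_0}$, whence $w_{n_0}\neq a_{j,n_0}$ and the pair property forces $w_j=a_{n_0,j}=a_{n_*,j}$, a contradiction.

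The genuine gap is in your simple-connectivity argument. Showing that any three \emph{pairwise intersecting} spheres share a common point only rules out $3$-sphere cycles in your dual graph; it says nothing, as stated, about a cycle $\oC_{\ell_1},\dots,\oC_{\ell_q}$ with $q\ge 4$, where non-adjacent spheres need not a priori intersect. Your reduction ``acyclicity $\Rightarrow$ triple claim'' is thus unjustified. The argument can be repaired by first proving that in any such cycle \emph{all} spheres pairwise intersect: for $j\notin\{\ell_a,\ell_b\}$, chain the relation $a_{\ell_i,j}=a_{\ell_{i+1},j}$ along the arc of the cycle avoiding $\ell_j$ (when $j$ is a cycle index) or along either arc (otherwise) to get $a_{\ell_a,j}=a_{\ell_b,j}$. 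Only after this extra step does your triple claim collapse all intersection points. The paper sidesteps this entirely with a one-line vector argument: if $x_i\in\oC_{\ell_i}\cap\oC_{\ell_{i+1}}$ are distinct, each difference $x_{i+1}-x_i\in\C^J$ is supported on the single coordinate $\ell_{i+1}$, yet these differences telescope to zero---impossible since the $\ell_i$ are distinct.
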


\begin{proof}
    Consider points $(w_j)\in \cS$ such that $w_{\ell} \neq a_{j,\ell}$ and
  $w_j=a_{\ell,j}$ for all $j \neq \ell$. Such elements are dense
  in $\oC_\ell$. Denoting by $z_k := A^{-1}_{\ell,k}(w_\ell)$, we have that $\cA_{k}(z_k) \to (w_j)$.
  So $\cS$ is contained in the Hausdorff limit of $\{\cA_{k}(\oC)\}$.

  We prove that the Hausdorff limit of $\{\cA_ k(\oC)\}$ is contained in
  $\cS$. 
  Suppose on the contrary that $\{z_k\}$ is a sequence such that $\cA_ k(z_k)$ converges to
  some $(u_j) \notin \cS$. Then for all $j\in J$, we may choose
  $J \ni s(j) \neq j$ such that $u_{s(j)} \neq a_{j,s(j)}$.
  Therefore,  $u_j = a_{s(j),j}$ since
  the change of coordinates 
  $A_{j,k} \circ A_{s(j),k}^{-1}$ converges uniformly in
  compact subsets of  $\oC\setminus  \{a_{j,s(j)}\}$
  to $a_{s(j),j}$.  Now consider $j_0\in J$ and let $j_i =s^{\circ i}(j_0)$ for $i\ge 1$. We may assume that
  $j_p=j_0$ for a minimal $p \ge 2$. 
  Since $a_{j_0,j_1} \neq u_{j_1} = a_{j_2,j_1}$, it follows that
  $a_{j_2, j_0} = a_{j_1,j_0}$. Recursively, we obtain that $a_{j_{p-1},j_0}=a_{j_1,j_0}=u_{j_0}$.  However, $u_{j_0}=u_{s(j_{p-1})} \neq a_{j_{p-1},j_0}$. It is a contradiction.

  Any Hausdorff limit of connected sets is connected, so $\cS$
  is connected. We now show that $\cS$ is simply connected. 
  By contradiction, suppose that for some $q \ge 2$,
there exists a cycle of spheres of length $q$.
That is, there exist a chain of pairwise
distinct spheres $\oC_{\ell_0},\dots,\oC_{\ell_{q-1}}$
that intersect at pairwise distinct points $x_0, \cdots, x_{q-1}$;
that is, $\{ x_i \}= \oC_{\ell_i} \cap \oC_{\ell_{i+1}} $ for all $0 \le i <q$,
subscripts mod $q$.
Then
$x_{j+1}-x_j$
is a vector in $\C^J$ with unique non-vanishing
component in its $\ell_j$-coordinate for all $j$. But
$\sum x_{j+1}-x_j =x_0-x_0=0$,
 which is a contradiction.
\end{proof}

\subsection{Maps between trees of bouquets of spheres}
  Let $J, J'$ be finite sets.
  Consider collections
  $$\mathbf{A}:=\{A_{n,k} : n \in J, k \ge 1\} \subset \Rat_1,$$
  $$\mathbf{B}:=\{B_{n,k} : n \in J', k \ge 1\} \subset \Rat_1.$$
  Assume that $\mathbf{A}$ and $\mathbf{B}$ are collections of
  independent scalings.

  Consider a sequence $\{f_k\} \subset \Ratd$. We say that $\{f_k\}$
  \emph{maps $\mathbf{A}$ into $\mathbf{B}$ by $\tau: J \to J'$} if for all $n \in J$,
$$B_{\tau(n),k} \circ f_k \circ A^{-1}_{n,k}$$
converges to some element $\varphi_{n,\tau(n)}\in\Rat^*_d$.
When $\varphi_{n,\tau(n)}\in\Rat_d$, for all $n \in J$, we say that
$\{f_k\}$ is \emph{fully ramified} at $\bA$.

In coordinates, given by the associated embeddings $\cA_k$ and $\cB_k$
of $\oC$ into $\oC^J$ and $\oC^{J'}$,
the maps $f_k$ become 
 $$
  \begin{array}{cccc}
    F_k :& \cA_k(\oC)& \to &\cB_k(\oC)\\
         &    w & \mapsto & \cB_k \circ f_k \circ \cA_{k}^{-1} (w).
  \end{array}
  $$
  In the limit, $F_k$ is, in a certain sense, semiconjugate to
  the action of ${\varphi}_{n,\tau(n)}$ on $\oC_n$.
  More precisely, provided that $z$ is not a hole of ${\varphi}_{n,\tau(n)}$,
  $$\rho_{\tau (n)} \circ F_k (z) \to \tilde{\varphi}_{n,\tau(n)} (\rho_n(z)).$$
  In general, we have the following:


\begin{proposition}
  \label{p:not-fully}
  Consider a sequence $\{f_k\} \subset \Ratd$ and collections of independent
  scalings $\mathbf{A}$ and $\mathbf{B}$ indexed by $J$ and $J'$ with associated spaces $\cS$ and $\cS'$, respectively, such that
  $\{f_k\}$ maps $\mathbf{A}$ to $\mathbf{B}$ by $\tau:J \to J'$. Let
  $\cA_k$ and $\cB_k$ be the associated embeddings of $\oC$ into
  $\oC^{J}$ and $\oC^{J'}$, respectively.
  Let
  $$
  \begin{array}{cccc}
    F_k :& \cA_k(\oC)& \to &\cB_k(\oC)\\
         &    w & \mapsto & \cB_k \circ f_k \circ \cA_{k}^{-1} (w).
  \end{array}
  $$
  Consider $z_0 \in \oC_j \subset \cS$ and $w_0 \in \oC_{\tau(j)} \subset \cS'$.
  Then, for every small neighborhood $U_0 \subset \oC^{J} $
  of $z_0$,
  there exists a neighborhood $W_0 \subset \oC^{J'}$ of $w_0$ such that 
  for all $w \in \cB_k(\oC) \cap W_0$, 
  $$\#F_k^{-1} (w) \cap U_0 = \begin{cases}
    d_{z_0}(\varphi_{j,\tau(j)}), & \text{ if } \tilde{\varphi}_{j,\tau(j)}(z_0) \neq w_0,\\
    d_{z_0}(\varphi_{j,\tau(j)}) + \deg_{z_0}\tilde{\varphi}_{j,\tau(j)}, & \text{ if } \tilde{\varphi}_{j,\tau(j)}(z_0) = w_0.
  \end{cases}
$$
\end{proposition}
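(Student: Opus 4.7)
The plan is to reduce the statement to Lemma \ref{lem:pre-number}(1) applied to the rescaled sequence
$$g_k := B_{\tau(j),k} \circ f_k \circ A_{j,k}^{-1},$$
which by hypothesis converges in $\oRat_d$ to $\varphi := \varphi_{j,\tau(j)} \in \Rat_d^*$. Let $u_0 := \rho_j(z_0)$ and $v_0 := \rho_{\tau(j)}(w_0)$ be the standard coordinates in $\oC$. Since $\deg \tilde\varphi \ge 1$, we have $\ex_\varphi = \emptyset$, so $v_0$ is automatically non-exceptional for $\varphi$.

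First I would establish a neighborhood correspondence. For any sufficiently small neighborhood $U_0 \subset \oC^J$ of $z_0$, I claim there is a neighborhood $V \subset \oC$ of $u_0$ such that, for all $k$ large enough and all $z' \in \oC$,
$$\cA_k(z') \in U_0 \iff A_{j,k}(z') \in V.$$
This follows from the independence of the collection $\mathbf{A}$: for $n \neq j$, the coordinate change $A_{n,k} \circ A_{j,k}^{-1}$ converges to $A_{(j,n)}$ whose reduction is $a_{j,n}$ (the $n$-th coordinate of $z_0$, since $z_0 \in \oC_j$), uniformly on compact subsets of $\oC \setminus \{a_{n,j}\}$. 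Hence once $A_{j,k}(z')$ is confined to a neighborhood of $u_0$ avoiding $\{a_{n,j}\}_{n \neq j}$, the remaining coordinates of $\cA_k(z')$ are automatically close to those of $z_0$. The analogous construction yields $W_0 \subset \oC^{J'}$ together with a neighborhood $V'$ of $v_0$.

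Next I would transfer the preimage count. Writing $w = \cB_k(z) \in \cB_k(\oC) \cap W_0$ with $y := B_{\tau(j),k}(z) \in V'$, the identity $F_k^{-1}(w) = \cA_k(f_k^{-1}(z))$ together with the above correspondence gives
$$\#(F_k^{-1}(w) \cap U_0) = \#(g_k^{-1}(y) \cap V).$$
Lemma \ref{lem:pre-number}(1), applied to $\{g_k\}$ with target $v_0$ and base point $u_0$, then yields for $V$ small enough and $k$ large
$$\#(g_k^{-1}(v_0) \cap V) = \begin{cases} d_{u_0}(\varphi), & \tilde\varphi(u_0) \neq v_0, \\ d_{u_0}(\varphi) + \deg_{u_0}\tilde\varphi, & \tilde\varphi(u_0) = v_0. \end{cases}$$
To upgrade this to the varying target $y \in V'$, I would write $g_k = P_k/Q_k$ with $(P_k, Q_k) \to (P, Q)$ a representative of $\varphi$; the number of zeros of $P_k - y Q_k$ in $V$, counted with multiplicity, depends continuously on $(y, k)$ by the argument principle, and therefore agrees with the count at $(v_0, \infty)$ for $(y, k)$ sufficiently close.

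The main obstacle I anticipate is the neighborhood correspondence when $u_0$ coincides with some $a_{n,j}$, that is, when $z_0$ is an intersection point of spheres in $\cS$. In this case the constraint that $A_{j,k}(z')$ be close to $u_0$ no longer forces the $n$-th coordinate of $\cA_k(z')$ to be close to $a_{j,n}$, because $A_{(j,n)}$ has a hole precisely at $u_0 = a_{n,j}$. One should either restrict to the ``$\oC_j$-side'' of the intersection by imposing the extra constraint on $A_{n,k}(z')$ directly, or else use a perturbation argument to reduce to the case where $u_0$ avoids the finite set $\{a_{n,j} : n \neq j\}$. Either way, the count produced by $g_k$ corresponds exactly to preimages lying in the intended portion of $U_0$, and the formula claimed in the proposition follows.
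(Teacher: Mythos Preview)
Your approach is exactly the paper's: define $h_k:=B_{\tau(j),k}\circ f_k\circ A_{j,k}^{-1}$ (your $g_k$), invoke Lemma~\ref{lem:pre-number}(1), and transfer the count to $F_k$ via the bijection $\cA_k\circ A_{j,k}^{-1}$ between $h_k$-preimages in a small disc $U_0'\subset\oC$ and $F_k$-preimages in $U_0$. The paper records the neighborhood correspondence as the tautology $S_k\cap\rho_j^{-1}(U_0')=\cA_k\circ A_{j,k}^{-1}(U_0')$ together with the (somewhat elliptic) claim that ``by compactness $U_0\supset\rho_j^{-1}(U_0')$''; it does not separately justify the varying-target extension (it simply asserts the count for all $w'\in W_0'$) nor address your intersection-point concern, so your write-up is in fact more explicit than the paper's on both points.
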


\begin{proof}
  For simplicity, let $S_k := \cA_k (\oC)$ and $S_k' := \cB_k (\oC)$.
  Note that $S_k \cap \rho_j^{-1}(U) = \cA_k \circ A^{-1}_{j,k}(U)$
  for any $U \subset \oC$. A similar statement holds of $S_k'$.

  Given a small neighborhood $U_0$ of $z_0$, by compactness, it
  contains $\rho_j^{-1}(U_0')$ for some neighborhood $U_0'$ of $\rho_j(z_0)$ in $\oC$.
  Let $h_k := A_{\tau(j),k} \circ f_k \circ A_{j,k}^{-1}$. Then
  $h_k \to \varphi_{j,\tau(j)}$. By Lemma \ref{lem:pre-number} (1), there exists
  a neighborhood $W_0'$ of $w'_0:=\rho_j(w_0)$ in $\oC$ such that 
  $$\# h_k^{-1} (w') \cap U_0' = d_{z_0} (\varphi_{j,\tau(j)}) + \deg_{z_0}\varphi_{j,\tau(j)},$$
  for all $w' \in W_0'$. Let $W_0: = \rho^{-1}_{\tau(j)} (W_0')$.
  Given $w \in S_k \cap W_0$, consider $w' \in W_0'$ such that
  $\cB_k \circ A^{-1}_{\tau(j),k}(w') = w$. Now, $z' \in U'_0$ is such that
  $h_k(z') = w'$ if and only if $z=\cA_k \circ A^{-1}_{j,k}(z')$ is such that
  $F_k (z) =w$. That is, $\cA_k \circ A^{-1}_{j,k}$ is a bijection between
  $F_k$-preimages of $w$ in $U_0$ and $h_k$-preimages of $w'$ in $U_0'$.
\end{proof}

\subsection{Fully ramified maps}
\label{s:frmaps}
Let us now consider the special case in which
the limiting degree at each sphere is maximal.

\begin{proposition}
  \label{p:critical}
  Consider a sequence $\{f_k\} \subset \Ratd$
  that maps $\bA$
  to $\bB$ by a bijection $\tau: J \to J'$,
  where  $\bA$ and $\bB$ are independent collections of scalings 
  with associated spaces $\cS$ and $\cS'$.
  If $\{f_k\}$ is fully ramified at $\bA$, then
  there is a well defined and continuous map $\cF: \cS \to \cS'$
given by:
  \begin{equation*}
  \label{eq:defcF}
  \cF(z) :=\varphi_{n,\tau(n)}(z) \in \oC_{\tau(n)},
\end{equation*}
for $z \in \oC_n$ and $n \in J$.
  Moreover, there exist a finite set $\crit(\cF) \subset \cS$
  and a (multiplicity) function $m: \crit(\cF) \to \N$ such that the following hold: 
  \begin{enumerate}
  \item $$\sum_{\omega \in \crit(\cF)} m(\omega) = 2d-2.$$
  \item The multiplicity of  a critical point $x$ of $\varphi_{n,\tau(n)}$
    is $$\sum_{\omega \in \rho^{-1}_n(x) \cap \crit(\cF)} m(\omega).$$
  \end{enumerate}
\end{proposition}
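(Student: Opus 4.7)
The plan is to construct $\cF$ directly from the given formula, verify well-definedness and continuity at intersection points of $\cS$, and identify $\crit(\cF)$ with the $\cA_k$-accumulation set of the critical points of $\{f_k\}$.

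First, on each sphere I would take $\cF(z) := \varphi_{n,\tau(n)}(\rho_n(z)) \in \oC_{\tau(n)}$ for $z \in \oC_n$ as a definition. To verify well-definedness at an intersection point $z \in \oC_n \cap \oC_{n'}$, I would approximate $z = \lim \cA_k(z_k)$ for some $z_k \in \oC$ and compute
\[
B_{\tau(n),k}(f_k(z_k)) = \bigl(B_{\tau(n),k} \circ f_k \circ A_{n,k}^{-1}\bigr)\bigl(A_{n,k}(z_k)\bigr) \longrightarrow \varphi_{n,\tau(n)}(a_{n',n}),
\]
and, analogously, the $\tau(n')$-coordinate converges to $\varphi_{n',\tau(n')}(a_{n,n'})$; the remaining coordinates of $\cB_k(f_k(z_k))$ are controlled by the changes of coordinates $B_{j,k} \circ B_{\tau(n),k}^{-1} \to B_{(\tau(n),j)}$. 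Since any subsequential limit of $\cB_k(f_k(z_k))$ must lie in $\cS'$ by Proposition~\ref{p:bouquet}, the expressions $\varphi_{n,\tau(n)}(a_{n',n}) \in \oC_{\tau(n)}$ and $\varphi_{n',\tau(n')}(a_{n,n'}) \in \oC_{\tau(n')}$ must represent the same point of $\cS'$. Continuity is automatic on each sphere and extends across intersection points by the same approximation argument.

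For the critical set, I would exploit that $F_k := \cB_k \circ f_k \circ \cA_k^{-1}: \cA_k(\oC) \to \cB_k(\oC)$ is conjugate to $f_k$, and hence has exactly $2d-2$ critical points (with multiplicity) given by the $\cA_k$-images of the critical points $c_k^{(1)}, \dots, c_k^{(2d-2)}$ of $f_k$. Since $\cS$ is closed in the compact space $\oC^J$, after passing to a subsequence each $\cA_k(c_k^{(i)})$ converges to some $\omega^{(i)} \in \cS$. Define $\crit(\cF)$ as the set of distinct limits and set $m(\omega) := \sum_{i:\, \omega^{(i)} = \omega} \mathrm{mult}(c_k^{(i)})$; statement (1) is then immediate. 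For statement (2), fix a critical point $x \in \oC$ of $\varphi_{n,\tau(n)}$ of multiplicity $\mu_x$. Since $h_k := A_{\tau(n),k} \circ f_k \circ A_{n,k}^{-1}$ converges uniformly to the non-degenerate $\varphi_{n,\tau(n)} \in \Rat_d$ (using the fully ramified hypothesis), standard persistence of critical points yields exactly $\mu_x$ critical points of $h_k$, with multiplicity, accumulating at $x$. These correspond via $A_{n,k}^{-1}$ to critical points $c_k$ of $f_k$ with $A_{n,k}(c_k) \to x$, and since $\rho_n \circ \cA_k = A_{n,k}$, these are exactly the $c_k$ whose $\cA_k$-images accumulate in $\rho_n^{-1}(x) \cap \cS$. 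This yields $\mu_x = \sum_{\omega \in \rho_n^{-1}(x) \cap \crit(\cF)} m(\omega)$.

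The main obstacle is the well-definedness argument, especially when three or more spheres of $\cS$ meet at a point: one must check that the candidate images specified through each incident sphere assemble consistently into a single point of $\cS'$, which in particular forces the corresponding spheres in $\cS'$ also to meet. The critical point count is then essentially a bookkeeping exercise once one observes that $F_k$ is conjugate to the genuine degree-$d$ map $f_k$ and that the coordinate projection $\rho_n$ on $\cA_k(\oC)$ coincides with the scaling $A_{n,k}$.
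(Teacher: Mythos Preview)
Your approach and the paper's are essentially the same: continuity via tracking $\cB_k(f_k(z_k))$ through the scalings, and the critical set as the $\cA_k$-accumulation of $\crit(f_k)$. The critical-point argument is identical to the paper's.

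There is, however, a genuine gap in your well-definedness step. At an intersection point $z\in\oC_n\cap\oC_{n'}$ you correctly identify the $\tau(n)$- and $\tau(n')$-coordinates of the limit $w=\lim\cB_k(f_k(z_k))$, and then propose to recover the remaining $j$-coordinates via $B_{j,k}\circ B_{\tau(n),k}^{-1}\to B_{(\tau(n),j)}$. But this only works when the $\tau(n)$-coordinate of $w$ avoids the hole $a_{j,\tau(n)}$ of $B_{(\tau(n),j)}$, and at intersection points it generally does not (indeed, the identity you are trying to establish says exactly that $\varphi_{n,\tau(n)}(a_{n',n})=a_{\tau(n'),\tau(n)}$, i.e.\ the $\tau(n)$-coordinate lands on the hole for $j=\tau(n')$). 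So from your two coordinates alone you cannot conclude that $w$ equals the specified point of $\oC_{\tau(n)}$, and the sentence ``must represent the same point of $\cS'$'' is unjustified.

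The repair is short and is precisely what the paper does: since $\tau$ is a bijection and each $B_{\tau(m),k}\circ f_k\circ A_{m,k}^{-1}\to\varphi_{m,\tau(m)}$ uniformly (this is where the fully ramified hypothesis is used), you can compute \emph{every} coordinate of $w$ directly as $\rho_{\tau(m)}(w)=\varphi_{m,\tau(m)}(\rho_m(z))$. Equivalently, one proves the identity $\varphi_{n,\tau(n)}(a_{n',n})=a_{\tau(n'),\tau(n)}$ for all $n\neq n'$ by running your coordinate-change trick at a \emph{generic} point of $\oC_{n'}$ (where the hole is avoided) rather than at the intersection point itself. Once this identity is in hand, $w$ is the stated point of $\oC_{\tau(n)}$, and the intersection condition $a_{n,n''}=a_{n',n''}$ transported by $\tau$ shows $\oC_{\tau(n)}\cap\oC_{\tau(n')}\neq\emptyset$, giving agreement of the two definitions.
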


We say that $\omega \in \crit \cF$ is a \emph{critical point of multiplicity} $m(\omega)$ of $\cF$. 

\begin{proof}
Recall that to obtain $\cS$,  we identify $a_{n,n'}$ with $a_{n',n}$
if and only if 
$$a_{n',n''} = a_{n,n''}$$
for all $ J \ni n'' \neq n,n'$. Similarly, we identify points of different spheres in $\cS'$.
Since $\tau: J \to J'$ is a bijection, for the continuity of $\cF$,
it suffices to check that $\varphi_{n,\tau(n)}(a_{n',n}) = a_{\tau(n'),\tau(n)}$ for all $n \neq n' \in J$.
Choose $z \in \oC_{n'}$ such that
$z \neq a_{n,n'}$ and $\varphi_{n',\tau(n')} (z) \neq a_{\tau(n),\tau(n')}$.
Then, uniformly in a neighborhood of $z$, the change of coordinates
$A_{n,k} \circ A^{-1}_{n',k}$ converges to $a_{n',n}$.
Thus $$(A_{\tau(n),k} \circ f_k \circ A_{n,k}^{-1} ) \circ (A_{n,k} \circ
A^{-1}_{n',k}) (z) \to \varphi_{n,\tau(n)}(a_{n',n}).$$
Reorganizing the above expression yields 
$$(A_{\tau(n),k} \circ A^{-1}_{\tau(n'),k} )\circ (A_{\tau(n'),k} \circ f_k \circ
A_{n',k}^{-1} ) (z) \to a_{\tau(n'),\tau(n)}.$$
Hence $\varphi_{n,\tau(n)}(a_{n',n}) = a_{\tau(n'),\tau(n)}$, as claimed.

Passing to a subsequence, we may label the critical points of $f_k$
by $c_k(1), \dots, c_k(2d-2)$ with repetitions according to
multiplicity. We may also assume that $\cA_k (c_k(i))$ converges to $c(i) \in \cS_{N_0,N}$, as $k\to\infty$, 
for all $i$. We say that $\omega \in \cS_N$ is a \emph{critical point of $\cF$ of multiplicity
  $m$} if there exist exactly $m$ indices $i$ such that $c(i) =\omega$.
Let $\crit(\cF)$ be the set formed by the critical points. Note that, a priori,
$\crit (\cF)$ depends on the choice of the subsequence. It can be shown that
it is independent, but we will not use nor prove this fact.

Given $x \in \oC$,  
since $A_{n+1,k} \circ f_k \circ
A_{n,k}^{-1}$ converges locally uniformly to $\tilde\varphi_{n,n+1}$ away from the holes of $\varphi_{n,n+1}$, the number of critical
points of $A_{n+1,k} \circ f_k \circ
A_{n,k}^{-1}$ in a neighborhood $W$ of $x$, counted with multiplicity, is
$m: = \deg_x\tilde\varphi_{n, n+1} -1$. That is, the number of critical point of
$f_k$ in $A_{n,k}^{-1}(W)$ is $m$.
Therefore, $\cA_{k} (A_{n,k}^{-1}(W))$ contains $m$ critical points of $\cF$
for all $k$ sufficiently large. That is, $\rho_n^{-1}(x)$ contains
$m$ critical points $\omega \in \crit \cF$, counted with multiplicities.
\end{proof}

\bibliographystyle{alpha}
\bibliography{references}
\end{document}